\newcommand\cyr
\renewcommand\rmdefault{wncyr}
\renewcommand\sfdefault{wncyss}
\renewcommand\encodingdefault{OT2}
\DeclareTextFontCommand{\textcyr}{\cyr}
  \tikzset{    afill/.style={draw,color=red,fill=red},
    bfill/.style={draw,color=green,fill=green}
  }
  \tikzstyle{thline}=[thick]
  \tikzstyle{cline}=[color=red]
  \tikzstyle{aline}=[color=red]
  \tikzstyle{bline}=[color=green]
    \tikzset{      afill/.style={draw,pattern=crosshatch},
      bfill/.style={draw,pattern=crosshatch dots}
    }
    \tikzstyle{thline}=[thick]
    \tikzstyle{cline}=[densely dashdotted]
    \tikzstyle{aline}=[densely dashdotted]
    \tikzstyle{bline}=[densely dotted]
\newlength{\tmpl}
\newcommand{\boxstar}{
\setlength{\tmpl}{\fboxsep}
\setlength{\fboxsep}{0pt}
\,\fbox{$\star$}\,
\setlength{\fboxsep}{\tmpl}
}
\numberwithin{equation}{section}
\newcommand{\onetwo}[1]{\hat{1}_2^{#1}} 
\newcommand{\pispecial}[1]{\nu_{0#1}} 
\newcommand{\Comp}[1]{\mathcal{C}_{#1}} \newcommand{\CompEven}[1]{\mathcal{C}^e_{#1}} \newcommand{\CompOdd}[1]{\mathcal{C}^o_{#1}}
\DeclareMathOperator{\Moeb}{\textup{M{\"{o}}b}}
\DeclareMathOperator{\Cf}{\mathrm{Cf}}
\DeclareMathOperator{\Tr}{\mathrm{Tr}}
\let\Re\undefined
\let\Im\undefined
\DeclareMathOperator{\Re}{\mathrm{Re}}
\DeclareMathOperator{\Im}{\mathrm{Im}}
\newcommand{\Krewl}[1]{\mathpalette\harpleft{#1}}
\newcommand{\harpleftsign}{\scriptstyle\leftharpoonup}
\newcommand{\harpleft}[2]{  \ifx\displaystyle#1\doalign{$\harpleftsign$}{#1#2}\fi
  \ifx\textstyle#1\doalign{$\harpleftsign$}{#1#2}\fi
  \ifx\scriptstyle#1\doalign{\scalebox{.6}[.9]{$\harpleftsign$}}{#1#2}\fi
  \ifx\scriptscriptstyle#1\doalign{\scalebox{.5}[.8]{$\harpleftsign$}}{#1#2}\fi
}
\newcommand{\Krewr}[1]{\mathpalette\harpright{#1}} \newcommand{\harprightsign}{\scriptstyle\rightharpoonup}
\newcommand{\harpright}[2]{  \ifx\displaystyle#1\doalign{$\harprightsign$}{#1#2}\fi
  \ifx\textstyle#1\doalign{$\harprightsign$}{#1#2}\fi
  \ifx\scriptstyle#1\doalign{\scalebox{.6}[.9]{$\harprightsign$}}{#1#2}\fi
  \ifx\scriptscriptstyle#1\doalign{\scalebox{.5}[.8]{$\harprightsign$}}{#1#2}\fi
}
\newcommand{\doalign}[2]{ {\vbox{\offinterlineskip\ialign{\hfil##\hfil\cr#1\cr$#2$\cr}}}}
\newcommand{\ED}[1][{}]{E^{\mathcal D}_{#1}}
\def\R{{\mathbb R}}
\def\C{{\mathbb C}}
\def\IC{{\mathbb C}}
\def\IR{{\mathbb R}}
\def\IN{{\mathbb N}}
\def\N{{\mathbb N}}
\def\T{\mathcal{T}}
\def\bX{{\boldsymbol{X}}}
\def\<{\langle}
\def\>{\rangle}
\def\X{{\mathnormal X}}
\def\c{\underline c}
\def\Y{{\mathnormal Y}}
\def\A{\mathcal{A}}
\def\BL{{{\leftthreetimes}}_\pi}\def\BR{{{\rightthreetimes}}_\pi}
\def\SpPair{\sqcap} \def\nest{\textbf{N}}
\newcommand{\abs}[1]{
  \left\lvert
    #1
  \right\rvert}
 \DeclareMathOperator{\SG}{\mathfrak{S}} \DeclareMathOperator{\NC}{\mathit{NC}}  \DeclareMathOperator{\NCeven}{\mathit{NCE}} \DeclareMathOperator{\NCodd}{\mathit{NCE}^c} \DeclareMathOperator{\Rtrans}{\mathcal{C}} \newcommand{\Rtranseven}[1]{  {\mathcal{C}_{#1}^\mathrm{(even)}}} 
\newcommand{\FriCAS}{\texttt{FriCAS}}
\newtheorem{th-def}{Theorem-Definition}[section]
\newtheorem{theo}{Theorem}[section]
\newtheorem{lemm}[theo]{Lemma}
\newtheorem{prop}[theo]{Proposition}
\newtheorem{cor}[theo]{Corollary}
\theoremstyle{definition}
\newtheorem{defi}[theo]{Definition}
\newtheorem{Rem}[theo]{Remark}
\def\cvput#1[#2]{\pnode(#1,1){#1} \pscircle*(#1,1){.1} \rput(#1,.5){$#2$}}
\title{Sums of Commutators in Free Probability}
\author[Wiktor Ejsmont]{Wiktor Ejsmont}
\address[Wiktor Ejsmont]{ 
Instytut Matematyczny, Uniwersytet Wroc\l awski,\\ pl. Grunwaldzki 2/4, 50-384 Wroc\l aw, Poland}
\email{wiktor.ejsmont@gmial.com}
\author[Franz Lehner]{Franz Lehner}
\address[Franz Lehner]{Institut f\"ur Diskrete Mathematik,
Technische Universit\"at Graz,
Steyrergasse 30, 8010 Graz, Austria }
\email{lehner@math.tugraz.at}
\subjclass[2010]{Primary: 46L54. Secondary: 62E10.}
\thanks{Supported by the Austrian Federal Ministry of Education, Science and
  Research and the Polish Ministry of Science and Higher Education, grants
  N$^{\textrm{os}}$ PL 08/2016 and PL 06/2018 and Wiktor
Ejsmont was supported by the Narodowe Centrum Nauki grant no. 2018/29/B/HS4/01420}
\keywords{commutator, free infinite divisibility,
  cancellation of free cumulants}
\begin{document}

\begin{abstract} 
We study the linear span of commutators of free random variables
and  show that these are the only quadratic forms which satisfy
the following equivalent properties:
\begin{itemize}
\item preservation free infinite divisibility,  
\item free and strong cancellation of odd cumulants,
\item symmetric distribution for any free family.
\end{itemize} 
The main combinatorial tool is an involution on non-crossing partitions. 
\end{abstract}

\date{
  \today}

\setlength{\parindent}{0pt}

\maketitle

\section{Introduction}
Free probability was introduced by Voiculescu 30 years ago
\cite{Voiculescu:1986,VoiculescuDykemaNica:1992} in order to solve
some problems in von Neumann algebras of free groups. It has developed into a
whole new field with numerous connections to 
different branches of mathematics like classical probability,
combinatorics and analysis, in particular random matrices
\cite{Voiculescu:1991}, noncrossing partitions
\cite{NicaSpeicher:1996} and operator algebras.
Free probability is considered the most developed branch 
of noncommutative probability and during its development far-reaching
analogies between classical and free probability emerged.
For example, there is a central limit theorem with the famous Wigner semicircle
law appearing in the limit, a corresponding Brownian motion, and  more
generally, one of the most striking features is the existence of 
the Bercovici-Pata bijection \cite{BercoviciPata:1999} between infinitely divisible distributions in the classical and the free world.

In the present article we continue our investigation of the distribution of quadratic forms
\cite{EjsmontLehner:2017}. The main result is a  characterization of quadratic
forms which preserve free infinite divisibility.
It was shown in \cite{ArizmendiHasebeSakuma:2013}  that the free commutator of
freely infinitely
divisible random variables is also freely infinitely divisible
and the authors ask whether there
are other noncommutative polynomials which preserve free infinite divisibility. 
In \cite{EjsmontLehner:2017} we showed that any quadratic form in free random
variables which exhibits the phenomenon of \emph{cancellation of odd cumulants},
i.e., whose distribution does not depend on the odd cumulants
of the distributions of the original variables, preserves free infinite
divisibility.
Examples are  the free commutator \cite{NicaSpeicher:1998}
and the free sample variance  \cite{EjsmontLehner:2017}.
Note that  the  cancellation phenomenon for the latter
only holds for free identically distributed families, while in the former
arbitrary free random variables can be inserted.

In the present paper we give a unified proof of these results.
In addition we record the observation that the
cancellation phenomenon for the commutator also occurs without
the freeness assumption, i.e., the remarkable phenomenon that the mixed odd cumulants
cancel
for sums of commutators of arbitrary noncommutative random variables.
Using these results  we introduce generalized tetilla laws.

\emph{Acknowledgements.}  
We thank the anonymous referee of the first version of this paper for numerous corrections and improvements.

\section{Preliminaries}
\label{sec:prelim}

\subsection{Basic Notation and Terminology}
A tracial noncommutative probability space is a pair $(\mathcal{A},\tau)$
where $\mathcal{A}$ is a von Neumann algebra, and  $\tau:\mathcal{A} \to
\mathrm{C}$ is a normal, faithful, tracial state, i.e., $\tau$ is linear and
continuous in the weak* topology, $\tau(\X \Y)=\tau(\Y \X)$,  $\tau(I)=1$,
$\tau(\X\X^*)\geq 0$ and $\tau(\X \X^{*}) = 0$ implies $\X = 0$ for all $\X,\Y
\in \mathcal{A}$.
The basic example of a noncommutative probability space is the algebra of
complex $N\times N$ matrices $M_N(\IC)$. The unique tracial state is
the normalized trace $\tau_N(A)=\frac{1}{N}\Tr(A)=\frac{1}{N}\sum A_{ii}$.

The elements $\X\in{\mathcal{A}}_{sa}$
are called (noncommutative) random variables; in the present paper
all random variables are assumed to be self-adjoint.
Given a noncommutative random variable $\X\in{\mathcal{A}}_{sa}$,
the spectral theorem provides a unique probability measure $\mu_X$ on 
$\R$ which encodes the distribution of $\X$ in the state $\tau$,
i.e., 
 $\tau( f(\X))=\int_\R f(\lambda)\,d\mu_\X(\lambda)$
for any bounded Borel function $f$ on $\R$.

\subsection{Free Independence}
A family of von Neumann subalgebras $\left(\mathcal{A}_i\right)_{i\in I}$ 
of $\mathcal{A}$ 
is called \emph{free}
 if $\tau(\X_{1} \dots \X_{n} ) = 0$ whenever $\tau(\X_{j} ) = 0$ for all
$j = 1,\dots, n$ and $\X_{j} \in \mathcal{A}_{i(j)}$ for some indices $i(1)\neq i(2)\neq \dots \neq i(n)$.
Random variables $\X_{1},\dots ,\X_{n} $  are freely independent (free) if the
subalgebras they generate are free. 
Free random variables can be constructed using the reduced free product
of von Neumann algebras \cite{Voiculescu:1985}.

\subsection{Free Convolution and the Cauchy-Stieltjes Transform}
It can be shown that the joint distribution of free random variables $X_i$
is uniquely determined by the distributions of the individual random variables
$X_i$ and therefore the operation of \emph{free convolution} is well defined:
Let $\mu$ and $\nu$ be probability measures on $\R$, and
$\X,\Y$ self-adjoint free random variables with respective distributions
$\mu$ and $\nu$,
The distribution of $\X+\Y$ is called the free additive convolution of $\mu$
and $\nu$ and is denoted by $\mu \boxplus \nu$.
For more details about free convolutions and free probability theory we refer
the reader to the standard sources
\cite{VoiculescuDykemaNica:1992,NicaSpeicher:2006,MingoSpeicher:2017}.

The analytic approach to free convolution is based on the Cauchy transform
\begin{align}
\label{eq:hm:ball}
G_\mu(z)=\int_{\R}\frac{1}{z-y}\,\mu(dy)
\end{align}
of a probability measure $\mu$. The Cauchy transform is analytic on the upper half
plane $\IC^+=\{x+iy|x,y\in \R, y>0\}$ 
and takes values in the closed lower half plane
 $\IC^-\cup\IR$. For measures with compact support the Cauchy transform is analytic at infinity
and related to the moment generating function $M_{\X}$ as follows:
\begin{align}\label{mgf}
M_{\X}(z)=\sum_{n=0}^{\infty}\,\tau(\X^n)\,z^n = \frac{1}{z}\,G_\X (1/z).
\end{align}

Moreover the Cauchy transform has an inverse in some neighbourhood of infinity
which has the form
$$
G_\mu^{-1}(z) = \frac{1}{z} + R_\mu(z),
$$
where $R_\mu(z)$ is analytic in a neighbourhood of zero and is called 
\emph{$R$-transform}.
The coefficients of its series expansion
\begin{align} \label{rtr}
R_{\X}(z)=\sum_{n=0}^{\infty}\,K_{n+1}(\X)\,z^n
\end{align} 
are called \emph{free cumulants} of the random variable $\X$, see
Section~\ref{ssec:freecumulants} below. 
As a formal generating series
it will be convenient to consider  instead
the shift $\Rtrans_{X}(z):=zR_{X}(z)$
which is called 
\emph{free cumulant transform} or \emph{free cumulant generating function}, 
The free convolution can now be computed via the identity
\begin{align} \label{freeconv}
  R_{\mu\boxplus\nu}(z)=R_\mu(z)+R_\nu(z)
  ,
\end{align}
see~\cite{Voiculescu:1986}.

In order to accomodate for measures with noncompact support, the following
reformulation is useful \cite{BercoviciVoiculescu:1993:unbounded}.
Let $F_\mu(z)=1/G_\mu(z)$ be the reciprocal Cauchy transform.
Then $F_\mu(z)$ has an analytic right compositional inverse $F_\mu^{-1}$ on a region
$$
\Gamma_{\eta,M}  = \{ z\in \IC \mid \abs{\Re z} < \eta\Im z, \text{ } \Im z > M\}
;
$$
the \emph{Voiculescu transform} is defined as the function
$$
\phi_\mu(z) = F_\mu^{-1}(z) - z
$$
which turns out to be $\phi_\mu(z) = R_\mu(1/z)$.

\subsection{Free infinite divisibility}

In analogy with classical probability,
a probability measure $\mu$ on $\R$ is said to be
\emph{freely infinitely divisible} (or FID for short)
if for each $n \in \{1, 2, 3, \dots \}$ there exists a probability measure
$\mu_n$ such that
$\mu= \mu_n\boxplus\mu_n\boxplus\dots\boxplus\mu_n
$ ($n$-fold convolution).

Free infinite divisibility of a measure $\mu$ is characterized
by the property that its Voiculescu transform has
a Nevanlinna-Pick representation
\cite{BercoviciVoiculescu:1993:unbounded}
\begin{equation}\label{eq:LevyKhintchine}
  \phi_{\mu}(z)=
  \gamma+\int_{\R}\frac{1+xz}{z-x}\,\rho(dx)
  =
  \gamma+\int_{\R}\left(\frac{1}{z-x}+\frac{x}{1+x^2}\right)(1+x^2)\,\rho(dx)
\end{equation}
for some $\gamma \in \R$ and some nonnegative finite measure $\rho$.

Combinatorially, the characterization \eqref{eq:LevyKhintchine}
is equivalent to the statement that the sequence of free cumulants
is conditionally positive definite, i.e., for all $n\in\IN$ and all vectors $\xi\in\IC^n$
\begin{equation}
  \label{eq:condposdef}
\sum_{i,j=1}^n \xi_i\widebar{\xi}_j K_{i+j}(X)\geq 0.
\end{equation}
Equivalently, the Hankel determinants $[K_{i+j}(X)]_{i,j=1,2,\dots,n}$ are positive
for all $n\geq 1$,
see \cite[Lecture~13]{NicaSpeicher:2006}.

\subsection{Some probability distributions}
Let us now recall the basic properties of some specific probability distributions
which play prominent roles in the present paper.

\subsubsection{Wigner semicircle law}
The Wigner semicircle law  has density
\begin{equation}
  \label{eq:semicirclelaw}
  d\mu(x) = \frac{1}{2\pi}
  \sqrt{4-x^2}\,dx
\end{equation}
on $-2 \leq x \leq 2 $. Its 
Cauchy-Stieltjes transform
is given by the formula
\begin{equation}
G_{\mu}(z)=\frac{z  -\sqrt{z ^2 - 4}}{2} , \label{eq:GtransformataMixner}
\end{equation}
where  $|z|$ is big enough and where the branch of the analytic square root is determined by the condition
that $\Im(z)>0\Rightarrow \Im(G_\mu(z))\leqslant 0$ (see \cite{SaitohYoshida:2001}). 

A non-commutative random variable $\X$ with semicircle law
is called \emph{semicircular} or \emph{free gaussian} random variable.
The reason for the latter is the fact that its free cumulants $K_r=0$ for $r>2$
and it appears in the free version of the central limit theorem.

\subsubsection{Free Poisson law}
The \emph{Marchenko-Pastur distribution} or \emph{free Poisson distribution of rate $\lambda$}  has $R$-transform
$$
R(z) = \frac{\lambda}{1- z}
.
$$
Let $\rho$ be a probability measure on the real line.
The \emph{compound free Poisson distribution} with parameters $(\lambda,\rho)$ has $R$-transform
$$
R(z) = \lambda (M_\rho(z)-1),
$$
i.e., the free cumulants are $K_n=\lambda m_n(\rho)$.

\subsubsection{Tetilla law}
If 
$\X$
and
$\Y$
are two free semicircular random variables with  variance one
then the law $\mu$ of the commutator
$i(\X\Y-\Y\X)$ 
is supported on the interval $\abs{x}<\sqrt{{11+5\sqrt{5}}}$
and is absolutely continuous with density
$$
{\mu}(dx)=\frac{1}{2\sqrt{3}\pi \abs{x}}\left[\sqrt[3]{1+18{x}^2+3\sqrt{3x^2+33x^4-6x^6}}-\sqrt[3]{1+18{x}^2-3\sqrt{3x^2+33x^4-6x^6}}\right]\,dx.
$$ 
The above density is rescaled from \cite[equation  (2.8)]{DeyaNourdin:2012}.
The name \emph{tetilla law}
has its origin in the similarity of its density  with the tetilla cheese from Galicia,
see \cite{DeyaNourdin:2012}.
\subsubsection{Compound free Poisson
  distribution}
In particular the case when the free cumulants form a moment sequence,
i.e., $K_n(\mu)= \lambda m_n(\nu)$ for some $\lambda>0$ and some probability
measure  $\nu$. In this case $\mu$ is called a \emph{compound free Poisson
  distribution of rate $\lambda$
with jump distribution $\nu$}. 

\subsubsection{Even elements}
 We call an element $\X\in \mathcal{A}$ even if  all its odd moments vanish, i.e., $\tau(\X^{2i+1})=0$ for all $i\geq 0.$ 
It is immediate that the vanishing of all odd moments is
equivalent to the vanishing of all odd cumulants, i.e., $K_{2i+1}(\X)=0$  
and thus the even cumulants contain the complete information about the distribution of an even element. The sequence
$\alpha_n=K_{2n}(\X)$ of even cumulants is called the \emph{determining sequence} of $X$.

\subsection{Noncrossing Partitions}
We recall some facts about noncrossing partitions. For details and proofs see
the lecture notes \cite[Lecture~9]{NicaSpeicher:2006}.
Let $S\subseteq\N $ be a finite subset.
A partition of $S$ is a set of mutually disjoint subsets
(also called \emph{blocks}) $B_1,B_2,\dots,B_k\subseteq S$ 
whose union is $S$. The \emph{size} of $\pi$ is the number of blocks
and will be denoted by $\abs{\pi}$.
Any partition $\pi$ defines an equivalence relation on $S$,
denoted by $\sim_\pi$, such that the equivalence classes are the blocks $\pi$. 
That is, $i\sim_\pi j$ if $i$ and $j$ belong to the same block of $\pi$.
A partition $\pi$ is called \emph{noncrossing} 
if different blocks do not interlace, i.e., there is no quadruple of
elements $i<j<k<l$ such that $i\sim_\pi k$ and $j\sim_\pi l$ 
but $i\not\sim_\pi j$. 

The set of non-crossing partitions of $S$ is denoted by $\NC(S)$,
in the case where $S=[n]:=\{1, \dots , n\}$ we write
$\NC(n):=\NC([n])$. 
$\NC(n)$ is a lattice under \emph{refinement order}, where the relation $\pi\leq \rho$
holds if
every block of $\pi$ is contained in a block of $\rho$.
The subclass of noncrossing pair partitions (i.e., noncrossing complete matchings)
is denoted by $\NC_2(n)$.

The maximal element of $\NC(n)$ under this order is the partition consisting
of only one block and it is denoted by  $\hat{1}_{n}$.
On the other hand the minimal element $\hat{0}_n$ 
is the unique partition where every block is a singleton.
Sometimes it is convenient to visualize partitions as diagrams, for example
$\hat{1}_n=\makeatletter{}\begin{tikzpicture}[inner sep=0pt,scale=0.04]
\draw (2,0)--(2,7.5);
\draw (8,0)--(8,7.5);
\node at (18,3){$\cdots$};
\draw (26,0)--(26,7.5);
\draw (2,7.5)--(26,7.5);
\end{tikzpicture}
 $
and $\hat{0}_n=\makeatletter{}\begin{tikzpicture}[inner sep=0pt,scale=0.04]
\draw (2,0)--(2,4.5);
\draw (8,0)--(8,4.5);
\node at (18,2){$\cdots$};
\draw (26,0)--(26,4.5);
\draw (2,4.5)--(2,4.5);
\draw (8,4.5)--(8,4.5);
\draw (14,4.5)--(14,4.5);
\draw (20,4.5)--(20,4.5);
\draw (26,4.5)--(26,4.5);
\end{tikzpicture}
 $. 

We will be concerned with the parity of block sizes.
A block of a partition is called even (resp.~odd) according to the parity
of its cardinality.
A partition is called \emph{even} if each of its blocks has even cardinality.
For even $n\in\IN$  we denote by  $\NCeven(n)$ the subset of even noncrossing
partitions and we will call \emph{odd noncrossing partitions} the elements of the complement $\NCodd(n)$, i.e., those which have at least one odd block.

We will apply the product formula
\eqref{twr:produktargumentow} below only in the case
of pairwise products of random variables and in this case
two specific pair partitions and their complements will play a particularly important role,
namely the \emph{standard matching}
$\onetwo{n}=\makeatletter{}\begin{tikzpicture}[inner sep=0pt,scale=0.04]
\draw (2,0)--(2,4.5);
\draw (8,0)--(8,4.5);
\draw (14,0)--(14,4.5);
\draw (20,0)--(20,4.5);
\node at (30,2){$\cdots$};
\draw (38,0)--(38,4.5);
\draw (44,0)--(44,4.5);
\draw (2,4.5)--(8,4.5);
\draw (14,4.5)--(20,4.5);
\draw (38,4.5)--(44,4.5);
\end{tikzpicture}
 \in\NC(2n)$
and its shift $\pispecial{n}=\makeatletter{}\begin{tikzpicture}[inner sep=0pt,scale=0.04]
\draw (2,0)--(2,7.5);
\draw (8,0)--(8,4.5);
\draw (14,0)--(14,4.5);
\draw (20,0)--(20,4.5);
\draw (26,0)--(26,4.5);
\node at (36,2){$\cdots$};
\draw (44,0)--(44,4.5);
\draw (50,0)--(50,4.5);
\draw (56,0)--(56,7.5);
\draw (8,4.5)--(14,4.5);
\draw (20,4.5)--(26,4.5);
\draw (44,4.5)--(50,4.5);
\draw (2,7.5)--(56,7.5);
\end{tikzpicture}
 \in\NC(2n)$.
The blocks $(2k-1,2k)$, $k\in [n]$ of the standard matching $\onetwo{n}$, are called  \emph{standard braces}.

The action of the symmetric group $\SG_n$ on the set $[n]$ naturally
induces an action on set partitions, namely
\begin{equation}
  \label{eq:sigma.pi}
\sigma\cdot\pi=\{\sigma(B) \mid B\in\pi\}
.
\end{equation}

\subsection{Free Cumulants}
\label{ssec:freecumulants}
Given  a  noncommutative  probability space $(\A,\tau)$
 the free  cumulants are multilinear maps $K_n : \mathcal{A}^n\to\IC$ 
defined  implicitly in terms of the mixed moments by the relation
\begin{align}
\tau(\X_{1}\X_{2}\dots \X_{n}) = \sum_{\pi \in \NC(n)}K_{\pi}(\X_{1},\X_{2},\dots ,\X_{n}),\label{eq:DefinicjaKumulant}
\end{align}
where 
\begin{align}
K_{\pi}(\X_{1},\X_{2},\dots ,\X_{n}):=\Pi_{B \in \pi}K_{\abs{B}}(\X_{i}:i \in B). \label{eq:DefinicjaProduktuKumulant}
\end{align}
 Sometimes we will abbreviate univariate cumulants as $K_{n}(\X)=K_{n}(\X,\dots ,\X )$. 

The action  \eqref{eq:sigma.pi} of a permutation on noncrossing partitions may
introduce crossings. This is however not the case for cyclic permutations and
mirror permutations. 
We record their effect on tracial cumulants in the following lemma, which
follows directly from the corresponding properties of the trace.
\begin{lemm}
\label{lem:traciality}
  \label{lem:permutedcumulants}
  Let $X_1, X_2,\dots,X_n\in\mathcal{A}$ be random variables in a tracial
  probability space, then
  \begin{enumerate}[(i)]
   \item $K_n(X_1,X_2,\dots,X_n)= \overline{K_n(X_n^*,X_{n-1}^*,\dots,X_1^*)}$
   \item $K_n(X_2,X_3,\dots,X_n,X_1)=K_n(X_1,X_2,\dots,X_n).$
  \end{enumerate}

\end{lemm}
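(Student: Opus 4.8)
The plan is to deduce both identities from the implicit moment--cumulant relation~\eqref{eq:DefinicjaKumulant} together with the two properties of the trace that are being invoked, namely $\tau(YZ)=\tau(ZY)$ and $\tau(Z^{*})=\overline{\tau(Z)}$, arguing by strong induction on $n$ and proving (i) and (ii) simultaneously. The base case $n=1$ is trivial, since $K_1(X)=\tau(X)$: there is nothing to permute cyclically, and (i) reduces to $\overline{\tau(X_1)}=\tau(X_1^{*})$.

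For the inductive step of~(ii), set $\tilde X_i:=X_{i+1}$ for $i<n$ and $\tilde X_n:=X_1$, and let $c$ be the cyclic permutation $c(i)=i+1\pmod n$, acting on $\NC(n)$ as in~\eqref{eq:sigma.pi}. Traciality of $\tau$ gives $\tau(\tilde X_1\cdots\tilde X_n)=\tau(X_1\cdots X_n)$; expanding both sides by~\eqref{eq:DefinicjaKumulant} and isolating the $\pi=\hat 1_n$ contributions — which are $K_n(X_2,\dots,X_n,X_1)$ on the left and $K_n(X_1,\dots,X_n)$ on the right — it suffices to show that the sums over $\pi\in\NC(n)\setminus\{\hat 1_n\}$ coincide. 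Since cyclic permutations and their inverses do not create crossings, $\pi\mapsto c\cdot\pi$ is a bijection of $\NC(n)$ fixing $\hat 1_n$, so we are reduced to the blockwise identity $K_\pi(\tilde X_1,\dots,\tilde X_n)=K_{c\cdot\pi}(X_1,\dots,X_n)$ for each $\pi\neq\hat 1_n$. Here every block $B$ has $\abs{B}<n$: if $n\notin B$ the restriction of $c$ to $B$ is order preserving and the $B$-factor of $K_\pi$ equals verbatim the $c(B)$-factor of $K_{c\cdot\pi}$; for the single block with $n\in B$ the two factors are $K_{\abs{B}}$ evaluated on argument lists differing by one cyclic rotation, so they agree by the induction hypothesis~(ii) applied in length $\abs{B}<n$. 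Re-indexing the surviving sums by $\rho=c\cdot\pi$ then gives $K_n(X_2,\dots,X_n,X_1)=K_n(X_1,\dots,X_n)$.

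The inductive step of~(i) is completely parallel, with the mirror permutation $m(i)=n+1-i$ in place of $c$ and with starting point $\overline{\tau(X_1\cdots X_n)}=\tau\bigl((X_1\cdots X_n)^{*}\bigr)=\tau(X_n^{*}\cdots X_1^{*})$. Expanding both sides, the $\pi=\hat 1_n$ terms are $\overline{K_n(X_1,\dots,X_n)}$ and $K_n(X_n^{*},\dots,X_1^{*})$, so it again suffices to see that the $\pi\neq\hat 1_n$ sums agree; and indeed, block by block, $\overline{K_\pi(X_1,\dots,X_n)}=K_{m\cdot\pi}(X_n^{*},\dots,X_1^{*})$, because within each block the list of arguments gets reversed and each entry replaced by its adjoint, which is exactly the induction hypothesis~(i) in length $\abs{B}<n$. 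As $m$ is likewise an order automorphism of $\NC(n)$ fixing $\hat 1_n$ and creates no crossings, re-indexing by $\rho=m\cdot\pi$ concludes the proof.

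There is no genuine obstacle here — the statement really is ``direct'' — and the only point needing a little care is the bookkeeping of the linear order of the arguments inside the blocks, which under $c$ cyclically shifts the block through $n$ and under $m$ reverses every block, forcing the argument back onto the statement itself but for strictly shorter tuples; an induction is the tidiest way to close that loop. Alternatively one can avoid induction altogether by invoking the explicit Möbius inversion $K_n(X_1,\dots,X_n)=\sum_{\pi\in\NC(n)}\Moeb(\pi,\hat 1_n)\,\prod_{B\in\pi}\tau\bigl(\prod_{i\in B}X_i\bigr)$ over the lattice $\NC(n)$ (with the inner product taken in increasing index order): both $c$ and $m$ are order automorphisms of $\NC(n)$ fixing $\hat 1_n$, hence preserve the Möbius values $\Moeb(\cdot,\hat 1_n)$ and all block sizes, while $\tau(YZ)=\tau(ZY)$, respectively $\tau(Z^{*})=\overline{\tau(Z)}$, convert the single-product moment factors into those of $\tau_{c\cdot\pi}$, respectively the conjugates of those of $\tau_{m\cdot\pi}$, so the two sides of each identity agree summand by summand.
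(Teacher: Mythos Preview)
Your proof is correct and is precisely the argument the paper is gesturing at when it says the lemma ``follows directly from the corresponding properties of the trace'': you unpack the moment--cumulant relation and use that cyclic rotations and the mirror map are order automorphisms of $\NC(n)$, with traciality and $\tau(Z^*)=\overline{\tau(Z)}$ handling the single block on which the index order is disturbed. The M\"obius-inversion variant you sketch at the end is in fact the cleanest formulation, since it applies the trace identities directly to moment factors and avoids the induction altogether.
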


Free cumulants provide a powerful technical tool to investigate
free random variables. 
This is due to the basic property of \emph{vanishing of mixed cumulants}. 
By this we mean the property that 
$$
K_n(X_1,X_2,\dots,X_n)=0
$$
for any family of random variables $X_1,X_2,\dots,X_n$ which can be partitioned
into two mutually free subsets.

For free sequences this can be reformulated as follows.
Let $(X_i)_{i\in\N}$ be a sequence of free random variables and
$h:[r]\to\N$ a map. We denote by $\ker h$ the set partition
which is induced by the equivalence relation 
$$
i\sim_{\ker h} j
\ 
\iff
\ 
h(i)=h(j)
.
$$
In this notation, vanishing of mixed cumulants implies that
\begin{equation}
  \label{eq:kerh>=pi}
  K_\pi(X_{h(1)},X_{h(2)},\dots,X_{h(r)})=0
  \text{ unless $\ker h\geq \pi$.}
\end{equation}

Our main technical tool is the free version,
due to Krawczyk and Speicher~\cite{KrawczykSpeicher:2000} (see also   \cite[Theorem 11.12]{NicaSpeicher:2006}), of the classical formula of 
James and Leonov/Shiryaev \cite{James:1958,LeonovShiryaev:1959} which
expresses cumulants of products in terms of individual cumulants.
\begin{theo}
\label{thm:krawczyk}
Let
$r,n \in \N$ and $ i_1 < i_2 < \dots < i_r = n$ be given and let
$$\rho=\{(1,2,\dots,i_1),(i_1+1,i_1+2,\dots,i_2),\dots,(i_{r-1}+1,i_{r-1}+2,\dots,i_r)\}\in \NC(n)$$ 
be the induced interval partition.
Consider now random variables $\X_1,\dots,\X_n\in\A$.
Then the free cumulants of the products can be expanded
as follows:
\begin{align} 
\label{twr:produktargumentow}
K_r(\X_1\dots \X_{i_1},\dots,\X_{i_{r-1}+1}\dots\X_n)=\sum_{\substack{\pi\in \NC(n) \\ \pi\vee\rho=\hat{1}_{n}} }K_\pi({\X_1,\dots,\X_n}). 
\end{align} 
\end{theo}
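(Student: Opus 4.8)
The plan is to prove \eqref{twr:produktargumentow} by Möbius inversion on the lattice $\NC(r)$, reducing it to the moment--cumulant relation \eqref{eq:DefinicjaKumulant}. Put $i_0=0$ and $\Y_k=\X_{i_{k-1}+1}\cdots\X_{i_k}$ for $k\in[r]$, so that the $k$-th block of $\rho$ is the interval $V_k=\{i_{k-1}+1,\dots,i_k\}$ and the left-hand side is $K_r(\Y_1,\dots,\Y_r)$. The combinatorial heart of the matter is that, since $\rho$ is an interval partition, the ``blow-up'' map $\sigma\mapsto\tilde\sigma$ --- where $a\sim_{\tilde\sigma}b$ iff the $\rho$-blocks containing $a$ and $b$ lie in a common block of $\sigma$ --- is an order isomorphism of $\NC(r)$ onto $[\rho,\hat{1}_{n}]\cap\NC(n)=\{\eta\in\NC(n):\eta\geq\rho\}$; in particular the blow-up of $\hat{1}_{r}$ is $\hat{1}_{n}$. (Surjectivity: any $\eta\geq\rho$ has blocks that are unions of $\rho$-intervals, and choosing order-respecting representatives turns a crossing of the induced partition of $[r]$ into a crossing of $\eta$; monotonicity in both directions is immediate.)

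First I would establish the ``moment version'': for every $\sigma\in\NC(r)$,
\begin{equation*}
  \tau_\sigma(\Y_1,\dots,\Y_r):=\prod_{W\in\sigma}\tau\Bigl(\prod_{k\in W}\Y_k\Bigr)
  =\sum_{\substack{\pi\in\NC(n)\\ \pi\leq\tilde\sigma}}K_\pi(\X_1,\dots,\X_n).
  \tag{$\ast$}
\end{equation*}
Indeed, for a single block $W=\{k_1<\dots<k_m\}$ the product $\prod_{k\in W}\Y_k$ equals $\X_{j_1}\X_{j_2}\cdots$ where $j_1<j_2<\cdots$ enumerates $S_W:=\bigcup_{k\in W}V_k$ (here one uses that the $V_k$ are intervals listed in increasing order), so by \eqref{eq:DefinicjaKumulant} it expands as $\sum_{\pi'\in\NC(S_W)}K_{\pi'}$. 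Multiplying these expansions over $W\in\sigma$ and gluing a choice $\pi'_W\in\NC(S_W)$ into one set partition $\pi$ of $[n]$ yields $(\ast)$: each such $\pi$ has all blocks contained in $\tilde\sigma$-blocks, hence $\pi\leq\tilde\sigma$, and it is noncrossing --- a crossing $i<j<k<l$ of $\pi$ would force $\{i,k\}$ and $\{j,l\}$ into $\tilde\sigma$-blocks $S_W$, $S_{W'}$, contradicting that $\tilde\sigma$ is noncrossing when $W\neq W'$ and that $\pi'_W$ is noncrossing when $W=W'$; conversely every $\pi\leq\tilde\sigma$ restricts to such a tuple, and $\prod_{W}K_{\pi'_W}=K_\pi(\X_1,\dots,\X_n)$ by \eqref{eq:DefinicjaProduktuKumulant}.

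It remains to invert. By Möbius inversion of the moment--cumulant formula over $\NC(r)$ followed by substitution of $(\ast)$,
\begin{align*}
  K_r(\Y_1,\dots,\Y_r)
  &=\sum_{\sigma\in\NC(r)}\mu_{\NC(r)}(\sigma,\hat{1}_{r})\,\tau_\sigma(\Y_1,\dots,\Y_r)\\
  &=\sum_{\pi\in\NC(n)}K_\pi(\X_1,\dots,\X_n)\sum_{\substack{\sigma\in\NC(r)\\ \tilde\sigma\geq\pi}}\mu_{\NC(r)}(\sigma,\hat{1}_{r}).
\end{align*}
Since $\tilde\sigma\geq\rho$ always, the condition $\tilde\sigma\geq\pi$ is equivalent to $\tilde\sigma\geq\pi\vee\rho$; and $\pi\vee\rho$ lies in $[\rho,\hat{1}_{n}]\cap\NC(n)$, so we may write $\pi\vee\rho=\tilde\omega$ for a unique $\omega\in\NC(r)$, which turns the inner condition into $\sigma\geq\omega$. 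Hence the inner sum equals $\sum_{\sigma\geq\omega}\mu_{\NC(r)}(\sigma,\hat{1}_{r})=\delta_{\omega,\hat{1}_{r}}$, and $\omega=\hat{1}_{r}$ holds precisely when $\pi\vee\rho=\hat{1}_{n}$. Therefore only the partitions $\pi$ with $\pi\vee\rho=\hat{1}_{n}$ contribute, which is exactly \eqref{twr:produktargumentow}.

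The step I expect to be the main obstacle is the combinatorial lemma that $\sigma\mapsto\tilde\sigma$ is an order isomorphism of $\NC(r)$ onto $[\rho,\hat{1}_{n}]\cap\NC(n)$ --- this is where the interval structure of $\rho$ genuinely enters, and it is what makes the Möbius sum collapse to the single condition $\pi\vee\rho=\hat{1}_{n}$ --- together with a careful verification of the gluing identity $(\ast)$ (in particular, that joining noncrossing partitions of the $S_W$ across the blocks of a noncrossing $\tilde\sigma$ stays noncrossing). Everything afterwards is formal manipulation with \eqref{eq:DefinicjaKumulant}, \eqref{eq:DefinicjaProduktuKumulant} and the Möbius identity $\sum_{\sigma\geq\sigma_0}\mu(\sigma,\hat{1}_{r})=\delta_{\sigma_0,\hat{1}_{r}}$. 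This is the route of Krawczyk and Speicher~\cite{KrawczykSpeicher:2000}; see also \cite[Theorem~11.12]{NicaSpeicher:2006}.
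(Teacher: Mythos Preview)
Your proof is correct and follows exactly the standard Möbius-inversion argument of Krawczyk--Speicher (and \cite[Theorem~11.12]{NicaSpeicher:2006}). Note, however, that the paper does not actually prove Theorem~\ref{thm:krawczyk}: it is quoted as a known result from \cite{KrawczykSpeicher:2000} and \cite{NicaSpeicher:2006}, so there is no ``paper's own proof'' to compare against---your write-up simply supplies the omitted details along the same lines as the cited sources.
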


\subsection{Cumulants of quadratic forms}
Kreweras \cite{Kreweras:1972} discovered an interesting antiisomorphism
of the lattice $\NC(n)$, now called the \emph{Kreweras complementation map},
of which we will need two variants.
Given a noncrossing partition $\pi$ of $\{1,2,\dots,n\}$,
the \emph{left Kreweras complement} $\Krewl\pi$ is the maximal
noncrossing partition of the ordered set $\{\bar{1},\bar{2},\dots,\bar{n}\}$
such that $\pi\cup\Krewl\pi$ is a noncrossing partition of
the interlaced set
$\{\bar{1},1,\bar{2},2,\dots,\bar{n},n\}$.
Similarly, the \emph{right Kreweras complement} $\Krewr\pi$ is the maximal
noncrossing partition of the ordered set $\{\bar{1},\bar{2},\dots,\bar{n}\}$
such that $\pi\cup\Krewr\pi$ is a noncrossing partition of
the interlaced set
$\{1,\bar{1},2,\bar{2},\dots,n,\bar{n}\}$.
The two maps are inverse to each other and it can be shown that the sizes
are related by the identity
\begin{equation}
  \label{eq:cardKrew}
  \abs{\Krewr{\pi}}=\abs{\Krewl\pi}=n+1-\abs\pi
  .
\end{equation}
This motivates the following definition.
\begin{defi}[{\cite[Ch.~17]{NicaSpeicher:2006}}]
  \label{def:boxedconvolution}
  Let $$f(z_1,\dots,z_m)=\sum_{n=1}^\infty \sum_{i_1,\dots,i_n=1}^m  a_{z_{i_1},\dots,z_{i_n}} z_{i_1}\dots z_{i_n} \text{   
   and }g(z_1,\dots,z_m)=\sum_{n=1}^\infty \sum_{i_1,\dots,i_n=1}^m  b_{z_{i_1},\dots,z_{i_n}} z_{i_1}\dots z_{i_n},$$
  be two formal noncommutative power series. Their \emph{boxed convolution} is
  defined as the coefficient of order $(i_1,\dots,i_n)$ of
  the formal power series
  $f\boxstar g$ which is defined as   $$
 \Cf_{(i_1,\dots,i_n)}(f\boxstar g) = \sum_{\pi\in\NC(n)} \Cf_{(i_1,\dots,i_n),\pi}(f) \Cf_{(i_1,\dots,i_n),\Krewr{\pi}}(g)
        .
  $$
\end{defi}
The boxed convolution $\boxstar$ ist most frequently used with the so
called the \emph{Zeta-series} and the  \emph{M\"obius-series}, which are
defined as
\begin{align*}
 \zeta_m(z_1,\dots,z_m)&=\sum_{n=1}^\infty \sum_{i_1,\dots,i_n=1}^m
                         z_{i_1}\dots z_{i_n}
  \\
 \Moeb_m(z_1,\dots,z_m)&=\sum_{n=1}^\infty(-1)^{n+1}\frac{(2n-2)!}{(n-1)!n!}(z_1+\dots+z_m)^n  
.
\end{align*}
The functions 
$\zeta_m$ and $\Moeb_m$ are inverse to each other
with respect to $\boxstar$.
In order to compute cumulants of quadratic forms we use the following results
from our previous paper \cite{EjsmontLehner:2017}.
\begin{lemm} [{\cite[Lemma~2.14]{EjsmontLehner:2017}}]
  \label{lemm:lematoparzystych}

  Let $r\in\mathbb{N}$ and $\pi \in \NCeven(2r)$,
  then $\pi \vee \onetwo{r}=\hat{1}_{2r}$ if and only if
  $\pi\geq \pispecial{r}=\makeatletter{}\begin{tikzpicture}[inner sep=0pt,scale=0.04]
\draw (2,0)--(2,7.5);
\draw (8,0)--(8,4.5);
\draw (14,0)--(14,4.5);
\draw (20,0)--(20,4.5);
\draw (26,0)--(26,4.5);
\node at (36,2){$\cdots$};
\draw (44,0)--(44,4.5);
\draw (50,0)--(50,4.5);
\draw (56,0)--(56,7.5);
\draw (8,4.5)--(14,4.5);
\draw (20,4.5)--(26,4.5);
\draw (44,4.5)--(50,4.5);
\draw (2,7.5)--(56,7.5);
\end{tikzpicture}
 $,
  i.e., $1$ and $2r$ lie in the same block of $\pi$ and elements $2i$ and $2i+1$
  also lie in the same block of $\pi$ for $i\in[r-1]$.  
  Consequently, 
  $$
  \{\pi : \pi \vee \onetwo{r}=\hat{1}_{2r}\}\cap \NCeven(2r)
  = [\pispecial{r}, \hat{1}_{2r}  ] ,
  $$
  is a lattice isomorphic to $\NC(r)$.

\end{lemm}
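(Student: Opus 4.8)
The plan is to prove the equivalence by establishing the two implications separately; the reverse one is immediate and the forward one is where the work lies. Recall that $\pispecial{r}$ has blocks $\{2k,2k+1\}$ for $k\in[r-1]$ together with the wrap-around block $\{1,2r\}$, so that $\pi\geq\pispecial{r}$ just means that the $r$ adjacencies ``$2k\sim_\pi 2k+1$'' ($k\in[r-1]$) and ``$1\sim_\pi 2r$'' all hold. For the easy direction $\pi\geq\pispecial{r}\Rightarrow\pi\vee\onetwo{r}=\hat{1}_{2r}$ it is enough to observe that $\pispecial{r}\vee\onetwo{r}=\hat{1}_{2r}$: alternately following the braces $\{2k-1,2k\}$ of $\onetwo{r}$ and the blocks $\{2k,2k+1\},\{1,2r\}$ of $\pispecial{r}$ joins all of $[2r]$ into one block, and then $\pi\vee\onetwo{r}\geq\pispecial{r}\vee\onetwo{r}=\hat{1}_{2r}$.

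For the converse I would argue by contraposition, and the first thing to record is the parity phenomenon peculiar to even partitions: if $\pi\in\NCeven(2r)$ and $a<b$ are consecutive elements of a block of $\pi$, then $\{a+1,\dots,b-1\}$ is, by non-crossingness, a union of blocks of $\pi$ and hence of even cardinality, so $a$ and $b$ have opposite parity. Therefore the elements of any block of $\pi$ alternate in parity, and since the blocks are even, the minimal and maximal elements of each block have opposite parity. Now suppose $\pi\vee\onetwo{r}=\hat{1}_{2r}$ but $\pi\not\geq\pispecial{r}$, so one of the adjacencies above fails. Since rotation by an even number of positions is an order automorphism of $\NC(2r)$ which preserves block cardinalities and permutes the braces cyclically (so fixes $\onetwo{r}$), after such a rotation I may assume that the failing adjacency is $1\not\sim_\pi 2r$. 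Let $B$ be the block of $\pi$ containing $1$. Its minimum is $1$, which is odd, so by the parity phenomenon its maximum is even, say $2t$; moreover $2t<2r$ since $2r\notin B$. By non-crossingness every block of $\pi$ meeting $\{1,\dots,2t\}$ is either $B$ itself or is contained in one of the gaps of $B$, hence in $\{1,\dots,2t\}$; together with $B\subseteq\{1,\dots,2t\}$ this shows $\{1,\dots,2t\}$ is a union of blocks of $\pi$. But $\{1,\dots,2t\}=\{1,2\}\cup\{3,4\}\cup\dots\cup\{2t-1,2t\}$ is also a union of braces, i.e.\ of blocks of $\onetwo{r}$, so it is a union of blocks of $\pi\vee\onetwo{r}$; being nonempty and proper, it contradicts $\pi\vee\onetwo{r}=\hat{1}_{2r}$.

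For the concluding statement I would note that the equivalence just proved identifies the left-hand set with $\{\pi\in\NCeven(2r):\pi\geq\pispecial{r}\}$, and that every partition coarsening $\pispecial{r}$ is a union of its two-element (hence even) blocks and is therefore automatically even; so this set equals the full interval $[\pispecial{r},\hat{1}_{2r}]$ in $\NC(2r)$. Finally, contracting the $r$ disjoint two-element blocks of $\pispecial{r}$ (which sit in cyclic order) to points identifies $[\pispecial{r},\hat{1}_{2r}]$ with $\NC(r)$, just as the familiar interval $[\onetwo{r},\hat{1}_{2r}]$ is isomorphic to $\NC(r)$; see \cite[Lecture~9]{NicaSpeicher:2006}.

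I expect the converse implication to be the main obstacle, and inside it the crucial step is the parity observation: it is exactly the evenness of $\pi$ that promotes ``$1$ and $2r$ are in distinct blocks of $\pi$'' to ``the initial segment $\{1,\dots,2t\}$ is a union of blocks of $\pi$'', which is what produces the forbidden $\onetwo{r}$-closed proper subset. Without evenness one only gets that $1$ and $2r$ become connected in $\pi\vee\onetwo{r}$, not that they are $\pi$-equivalent, and the statement genuinely fails in that generality.
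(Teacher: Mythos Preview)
Your argument is correct. The paper does not actually prove this lemma; it merely quotes it from \cite[Lemma~2.14]{EjsmontLehner:2017}, so there is no ``paper's proof'' to compare against. Your proof is clean and self-contained: the parity observation (consecutive elements of a block of an even noncrossing partition have opposite parity, hence $\min B$ and $\max B$ do too) is exactly the right mechanism, and using a rotation by an even amount to reduce to the case $1\not\sim_\pi 2r$ is a legitimate shortcut since such rotations are order automorphisms of $\NC(2r)$ fixing both $\onetwo{r}$ and $\pispecial{r}$ setwise. One cosmetic remark: you announce contraposition but then phrase it as a contradiction; since the contradiction you reach is precisely $\pi\vee\onetwo{r}\neq\hat{1}_{2r}$, you may as well drop the hypothesis $\pi\vee\onetwo{r}=\hat{1}_{2r}$ and present it as straight contraposition. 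The identification $[\pispecial{r},\hat{1}_{2r}]\cong\NC(r)$ is indeed the standard fact that $[\rho,\hat{1}_n]\cong\NC(\lvert\rho\rvert)$ for any $\rho\in\NC(n)$, as in \cite[Lecture~9]{NicaSpeicher:2006}.
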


We will use the following result from \cite{EjsmontLehner:2017} to
express cumulants of quadratic forms in even random variables 
in terms of the diagonal map of matrices.
This is the conditional expectation $\ED$ which which annihilates
all off-diagonal entries of a matrix, i.e.,
if we denote by $E_i$  the projection matrix onto the subspace
spanned by the  $i$-th unit vector, then
  \begin{equation}
  \label{eq:ED}
\begin{aligned}
  \ED:M_n(\IC)&\to M_n(\IC)\\
  A &\mapsto \sum_{i=1}^n E_iAE_i.
\end{aligned}
\end{equation}

\begin{prop}[{\cite[Proposition~4.5]{EjsmontLehner:2017}}]
  \label{prop:CykliczneVariancja}
  Let $\X_1, \X_2,\dots, \X_n\in \A$ be a free family of even random variables,
  $\bX= [X_iX_j]_{i,j=1  }^n$, $A=[a_{i,j}]_{i,j=1}^n\in M_n(\C)$ a scalar
  matrix
  and $Q_n=\sum a_{i,j}X_iX_j$ a quadratic form.
  \begin{enumerate}[(i)]
   \item \label{it:cyclic3}
    The cumulants of $Q_n$ are given by
\begin{equation}  \label{eq:kumulantsamplevariancenotiid}
 K_r(Q_n) =\sum_{i_1,\dots,i_r\in[n]} 
            \Tr(AE_{i_1}AE_{i_2}\dots AE_{i_r})\,
            \sum_{\substack{ \pi\in \NCeven(2r)\\ 
                \pi \vee \onetwo{r}=\hat{1}_{2r}}}             K_\pi(X_{i_r},X_{i_1},X_{i_1},X_{i_2},\dots,X_{i_{r-1}},X_{i_r}).
    \end{equation}
    \item If we assume in addition that $X_i$ are identically distributed
     then
     the previous formula simplifies to the following convolution-like expression
    \begin{equation}  \label{eq:kumulantsamplevariance}
      K_r(Q_n)=\sum_{ \pi\in \NC(r)}
      \Tr(\ED[\Krewl{\pi}](A)) \prod_{B\in\pi}K_{2\abs{B}}(X).
    \end{equation} 
  \end{enumerate}
\end{prop}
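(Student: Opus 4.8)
The plan is to expand $K_r(Q_n)$ by multilinearity, apply the Krawczyk--Speicher product formula (Theorem~\ref{thm:krawczyk}), and then use freeness together with evenness to collapse the resulting sum to the small family of partitions described in Lemma~\ref{lemm:lematoparzystych}; the identically distributed case will then follow by interchanging the order of summation. For part~(i), start from
\[
K_r(Q_n)=\sum_{p_1,q_1,\dots,p_r,q_r\in[n]}a_{p_1,q_1}a_{p_2,q_2}\cdots a_{p_r,q_r}\,K_r\bigl(X_{p_1}X_{q_1},X_{p_2}X_{q_2},\dots,X_{p_r}X_{q_r}\bigr),
\]
and apply Theorem~\ref{thm:krawczyk} to each term with the interval partition consisting of the $r$ standard braces, that is $\onetwo{r}$: this rewrites $K_r(X_{p_1}X_{q_1},\dots,X_{p_r}X_{q_r})=\sum_{\pi\in\NC(2r),\,\pi\vee\onetwo{r}=\hat{1}_{2r}}K_\pi(X_{p_1},X_{q_1},\dots,X_{p_r},X_{q_r})$. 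Because the $X_i$ are free, only partitions with $\pi\le\ker h$ contribute, by vanishing of mixed cumulants \eqref{eq:kerh>=pi}; because the $X_i$ are even, any $\pi$ possessing an odd block gives $0$ (the corresponding block cumulant is either a mixed cumulant of free variables or an odd univariate cumulant of an even element). Hence the inner sum runs only over $\pi\in\NCeven(2r)$ with $\pi\vee\onetwo{r}=\hat{1}_{2r}$, and by Lemma~\ref{lemm:lematoparzystych} every such $\pi$ satisfies $\pi\ge\pispecial{r}$.

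Consequently $\ker h\ge\pispecial{r}$, which forces $q_k=p_{k+1}$ for $1\le k\le r-1$ and $q_r=p_1$: only the ``cyclically consistent'' index tuples survive. Setting $i_k:=q_k$ for $k=1,\dots,r$ and $i_0:=i_r$, so that $p_k=i_{k-1}$, the $2r$-tuple of arguments becomes $(X_{i_r},X_{i_1},X_{i_1},X_{i_2},X_{i_2},\dots,X_{i_{r-1}},X_{i_{r-1}},X_{i_r})$ and the scalar coefficient becomes $a_{i_r,i_1}a_{i_1,i_2}\cdots a_{i_{r-1},i_r}$. A straightforward telescoping computation with the projections $E_i$ identifies this coefficient with $\Tr(AE_{i_1}AE_{i_2}\cdots AE_{i_r})$, and substituting gives \eqref{eq:kumulantsamplevariancenotiid}.

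For part~(ii), assume in addition that the $X_i$ are identically distributed. Whenever $\pi\le\ker h$ the index is constant on each block of $\pi$, so $K_\pi(X_{i_r},X_{i_1},\dots)=\prod_{B\in\pi}K_{\abs{B}}(X)$, while $K_\pi=0$ otherwise; interchanging the order of summation in \eqref{eq:kumulantsamplevariancenotiid} yields
\[
K_r(Q_n)=\sum_{\pi\in[\pispecial{r},\hat{1}_{2r}]}\Bigl(\prod_{B\in\pi}K_{\abs{B}}(X)\Bigr)\sum_{\substack{i_1,\dots,i_r\in[n]\\ \ker h\ge\pi}}\Tr(AE_{i_1}AE_{i_2}\cdots AE_{i_r}).
\]
By Lemma~\ref{lemm:lematoparzystych} the interval $[\pispecial{r},\hat{1}_{2r}]$ is isomorphic to $\NC(r)$; writing $\pi\leftrightarrow\hat\pi$, each block of $\pispecial{r}$ has exactly two elements, so a block $B$ of $\pi$ corresponds to a block $\hat B$ of $\hat\pi$ with $\abs{B}=2\abs{\hat B}$, and the condition $\ker h\ge\pi$ becomes $\ker(i_1,\dots,i_r)\ge\hat\pi$. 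It then remains to prove the purely combinatorial identity
\[
\sum_{\substack{i_1,\dots,i_r\in[n]\\ \ker(i_1,\dots,i_r)\ge\hat\pi}}\Tr(AE_{i_1}AE_{i_2}\cdots AE_{i_r})=\Tr\bigl(\ED[\Krewl{\hat\pi}](A)\bigr),
\]
which is the defining property of the partitioned diagonal map $\ED[\,\cdot\,]$ (cf.\ \cite{EjsmontLehner:2017}); once it is established, substitution yields \eqref{eq:kumulantsamplevariance}.

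Given Lemma~\ref{lemm:lematoparzystych}, part~(i) is essentially bookkeeping, and so is the reduction step in part~(ii). The one genuinely delicate point is the last displayed identity: one must track how collapsing the cyclic tuple $(i_1,\dots,i_r)$ along the blocks of $\hat\pi$ turns the cyclic product $a_{i_r,i_1}a_{i_1,i_2}\cdots a_{i_{r-1},i_r}$ of matrix entries into a nested trace of diagonal expectations, and in particular verify that the noncrossing geometry of $\pispecial{r}$ makes the \emph{left} Kreweras complement $\Krewl{\hat\pi}$, and not $\Krewr{\hat\pi}$ or $\hat\pi$ itself, the correct partner; the extreme cases $\hat\pi=\hat{0}_r$ (giving $\Tr(A^r)$) and $\hat\pi=\hat{1}_r$ (giving $\sum_i a_{i,i}^r$) are a useful sanity check. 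This identification is most cleanly carried out by induction on the block structure of $\hat\pi$, or by reformulating it through the boxed-convolution calculus of Definition~\ref{def:boxedconvolution}.
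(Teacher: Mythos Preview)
The paper does not actually prove this proposition: it is quoted verbatim from \cite[Proposition~4.5]{EjsmontLehner:2017} and used as a black box, so there is no ``paper's own proof'' to compare against. Your outline is nonetheless the natural argument and matches what one finds in the cited source: multilinear expansion, the Krawczyk--Speicher formula with $\rho=\onetwo{r}$, elimination of odd $\pi$ via evenness/freeness, and then Lemma~\ref{lemm:lematoparzystych} to force $\pi\ge\pispecial{r}$ and hence the cyclic constraint $q_k=p_{k+1}$. Your identification $a_{i_r,i_1}a_{i_1,i_2}\cdots a_{i_{r-1},i_r}=\Tr(AE_{i_1}\cdots AE_{i_r})$ is correct, and the reduction of part~(ii) to the displayed Kreweras identity is exactly right.

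The only place your write-up is not self-contained is the last displayed identity relating the constrained trace sum to $\Tr(\ED[\Krewl{\hat\pi}](A))$. You correctly flag this as the delicate point and defer to \cite{EjsmontLehner:2017}; that is acceptable here since the proposition is itself a citation, but if you want a standalone proof you should spell out the explicit bijection $[\pispecial{r},\hat{1}_{2r}]\to\NC(r)$ (collapsing the block $\{2k,2k+1\}$ to the point $k$ and $\{1,2r\}$ to the point $r$) and check on an interval block of $\hat\pi$ that summing over a common index produces one application of $\ED$, with the nesting structure governed by $\Krewl{\hat\pi}$ rather than $\Krewr{\hat\pi}$; the two sanity checks you mention ($\hat\pi=\hat{0}_r$ and $\hat\pi=\hat{1}_r$) already pin down the orientation.
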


\begin{Rem}
  In the case of a free standard semicircular family formula 
  \eqref{eq:kumulantsamplevariance} has only one contributing term and
  takes the particularly simple form
  \begin{equation}
    \label{eq:cumQnsemi}
    K_r(Q_n) = \Tr(A_n^r)
    .
  \end{equation}
\end{Rem}

\subsection{Special notations and definitions for noncrossing partitions}
\begin{defi}
  A lattice $(L,\leq)$ is called \emph{bounded} if it has a unique minimal 
  and a unique maximal element,
  usually denoted $\hat{0}$ and $\hat{1}$, respectively.
  Let $a\in L$. An element $b\in L$ is called a \emph{complement} of $a$
  if  $a\wedge b=\hat{0}$ 
  and $a\vee b=\hat{1}$.
  We will need the weaker notion of \emph{upper complements}, i.e.,
  the set
  $$
    \{b\in L \mid a\vee b=\hat{1}\}
  .
  $$
\end{defi}

We denote the set of upper complements of $\onetwo{n}$ in $\NC(2n)$ by
$$
\Comp{2n} = \{\pi\in\NC(2n) \mid \pi\vee\makeatletter{}\begin{tikzpicture}[inner sep=0pt,scale=0.04]
\draw (2,0)--(2,4.5);
\draw (8,0)--(8,4.5);
\draw (14,0)--(14,4.5);
\draw (20,0)--(20,4.5);
\node at (30,2){$\cdots$};
\draw (38,0)--(38,4.5);
\draw (44,0)--(44,4.5);
\draw (2,4.5)--(8,4.5);
\draw (14,4.5)--(20,4.5);
\draw (38,4.5)--(44,4.5);
\end{tikzpicture}
 =\hat{1}_{2n}
                                      \}.
$$
Among these we single out the even ones
$$
\CompEven{2n} = \Comp{2n}\cap \NCeven(2n),
$$
and the remaining ones
$$
\CompOdd{2n} = \Comp{2n}\setminus \NCeven(2n),
$$
which have at least one odd block. Our aim is to show that under certain
conditions the contributions of $\CompOdd{2n}$ in the expansion
\eqref{twr:produktargumentow} cancel each other. To this end we will define an
involution on $\CompOdd{2n}$
in Section~\ref{sec:involution}.
This involution is based on the concept of \emph{inner odd blocks},
which we present next.

\begin{defi}
Let $\pi$ be a noncrossing partition and $B,B'\in\pi$ two distinct blocks of $\pi$.
\begin{enumerate}[1.]
 \item 
  We denote by $\alpha(B)=\min B$ and $\omega(B)=\max B$ its extreme points.
  The interval $I(B)=[\alpha(B),\alpha(B)+1,\dots,\omega(B)]$ is called the 
  \emph{padding interval} of $B$. 
  
 \item Given another block  $B'\in\pi$ we say that $B'$ is \emph{nested inside} $B$
  if $I(B')\subseteq I(B)$, i.e., if $\alpha(B)<\alpha(B')\leq \omega(B')< \omega(B)$.
 \item 
  An \emph{inner odd block} of $\pi$ is a block $B\in\pi$ 
  such that no other odd block of $\pi$ is nested inside $B$.
  In particular, every singleton is an inner odd block.
  Let us emphasize that for our purposes
  we allow even blocks to be nested inside inner odd blocks;
  see Figure~\ref{fig:innerodd}
  for  examples.
  
\begin{figure}[ht]
  \centering
\makeatletter{} \begin{tikzpicture}[scale=0.05]
\draw[thline] (2,0)--(2,7.5);
\draw[thline] (20,0)--(20,7.5);
\draw[thline] (26,0)--(26,7.5);
\draw[thline] (2,7.5)--(26,7.5);

\draw (8,0)--(8,4.5);
\draw (14,0)--(14,4.5);

\draw (32,0)--(32,7.5);
\draw (56,0)--(56,7.5);
\draw (62,0)--(62,7.5);
\draw (32,7.5)--(62,7.5);

\draw[thline] (38,0)--(38,4.5);
\draw[thline] (44,0)--(44,4.5);
\draw[thline] (50,0)--(50,4.5);

\draw[thline] (68,0)--(68,7.5);
\draw[thline] (86,0)--(86,7.5);
\draw[thline] (92,0)--(92,7.5);
\draw[thline] (110,0)--(110,7.5);
\draw[thline] (116,0)--(116,7.5);
\draw[thline] (134,0)--(134,7.5);
\draw[thline] (140,0)--(140,7.5);
\draw[thline] (68,7.5)--(140,7.5);

\draw (74,0)--(74,4.5);
\draw (80,0)--(80,4.5);

\draw (98,0)--(98,4.5);
\draw (104,0)--(104,4.5);

\draw (122,0)--(122,4.5);
\draw (128,0)--(128,4.5);

\draw (146,0)--(146,4.5);
\draw (152,0)--(152,4.5);

\draw[thline] (158,0)--(158,4.5);
\draw[thline] (164,0)--(164,4.5);
\draw[thline] (170,0)--(170,4.5);
\draw[thline] (158,4.5)--(170,4.5);

\draw (8,4.5)--(14,4.5);
\draw (38,4.5)--(50,4.5);
\draw (74,4.5)--(80,4.5);
\draw (98,4.5)--(104,4.5);
\draw (122,4.5)--(128,4.5);
\draw (146,4.5)--(152,4.5);
\end{tikzpicture} 

  \caption{Some inner odd blocks}
  \label{fig:innerodd}
\end{figure}
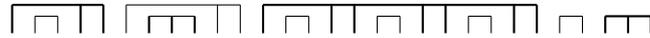
\end{enumerate}
\end{defi}

Before proceeding with further definitions we record in the next 
lemma some preliminary facts about inner odd blocks.

\begin{lemm} \label{lem:inneroddbasic} 
  Let $\pi\in\NCodd(2n)$, then   \begin{enumerate}[(i)]
   \item $\pi$ has at least one inner odd block.
   \item If $B\in\pi$ is an inner odd block,
    then its padding interval $I(B)$ is odd, the end points $\alpha(B)$ and $\omega(B)$ have
    the same parity and the complement $[2n]\setminus B$ 
    is a union of intervals, out of which exactly one is odd.
  \end{enumerate}
\end{lemm}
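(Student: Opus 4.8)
The plan is to derive everything from one structural observation about a noncrossing partition $\pi$: for any block $B\in\pi$, every other block $B''\in\pi$ satisfies either $B''\subseteq I(B)$ or $B''\cap I(B)=\emptyset$. I would prove this by contradiction. If $x\in B''\cap I(B)$ then, since $x\notin B$, in fact $\alpha(B)<x<\omega(B)$; and if $y\in B''\setminus I(B)$ then $y<\alpha(B)$ or $y>\omega(B)$. In the first case $y<\alpha(B)<x<\omega(B)$, in the second $\alpha(B)<x<\omega(B)<y$, and in both cases we obtain a quadruple $i<j<k<l$ with $i\sim_\pi k$, $j\sim_\pi l$ (one pair in $B$, the other in $B''$) but $i\not\sim_\pi j$, i.e., a crossing. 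Two consequences will be used. First, the blocks of $\pi$ contained in $I(B)$ partition $I(B)$, and any such block other than $B$ has padding interval strictly contained in $I(B)$ --- strictness being automatic since distinct blocks cannot share a minimum or a maximum --- hence is nested inside $B$; so the blocks contained in $I(B)$ other than $B$ are exactly the blocks contained in $I(B)\setminus B$ and they partition it. Second, the same crossing argument (now with a point of $B$ separating two points of a third block and $\alpha(B)$ or $\omega(B)$ completing the quadruple) shows that no block contained in $I(B)\setminus B$ can meet two distinct gaps $[b+1,b'-1]$ determined by consecutive elements $b<b'$ of $B$.

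For (i): by hypothesis $\pi$ has at least one odd block; choose an odd block $B$ with $\abs{I(B)}$ minimal. If an odd block $B'$ were nested inside $B$ then $I(B')\subsetneq I(B)$, contradicting minimality of $\abs{I(B)}$. Hence $B$ is an inner odd block. (Equivalently, one may iterate: as long as $B$ is not inner odd, replace it by an odd block nested strictly inside; the padding interval strictly shrinks, so the process terminates.)

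For (ii): let $B=\{b_1<b_2<\dots<b_k\}$ be an inner odd block, so $k=\abs B$ is odd. By the first consequence above, $I(B)\setminus B$ is a disjoint union of blocks of $\pi$, each nested inside $B$, hence each even (as $B$ is an inner odd block and even blocks are allowed to nest inside it). Thus $\abs{I(B)\setminus B}$ is even and $\abs{I(B)}=\abs B+\abs{I(B)\setminus B}$ is odd; equivalently $\omega(B)-\alpha(B)$ is even, so $\alpha(B)$ and $\omega(B)$ have the same parity. For the final assertion, the maximal intervals of $[2n]\setminus B$ are the nonempty ones among $J_0=[1,b_1-1]$, $J_i=[b_i+1,b_{i+1}-1]$ for $1\le i\le k-1$, and $J_k=[b_k+1,2n]$. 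Their cardinalities sum to $2n-k$, which is odd, so an odd number of the $J_i$ have odd cardinality. For $1\le i\le k-1$, the second consequence shows that $J_i$ is a union of blocks of $\pi$ contained in $I(B)\setminus B$, all of which are even, so $\abs{J_i}$ is even. Hence the only possible odd members of the list are $J_0$ and $J_k$, and since the number of odd members is odd (and at most two), exactly one of $J_0,J_k$ is odd. This is the unique odd interval of $[2n]\setminus B$.

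The argument is elementary; the only places demanding care are ordering the four points correctly in each crossing argument and the parity bookkeeping in (ii) --- in particular observing that for distinct blocks the inclusion $I(B')\subseteq I(B)$ is automatically strict, so that ``contained in $I(B)$ and $\ne B$'' coincides with ``nested inside $B$''. I do not expect any substantive obstacle.
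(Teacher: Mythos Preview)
Your proof is correct and follows essentially the same approach as the paper's. Both arguments establish (i) by descending (you via minimality of $\abs{I(B)}$, the paper via recursion) and (ii) by observing that the inner gaps of $B$ are unions of even blocks, hence even, forcing $\abs{I(B)}$ odd and exactly one of the two outer intervals $[1,\alpha(B)-1]$, $[\omega(B)+1,2n]$ odd; your version is simply more explicit about the underlying noncrossing structure and the parity bookkeeping.
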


\begin{proof}
  \begin{enumerate}[(i)]
   \item
    The first part is obvious if $\pi$ has a singleton, otherwise 
    pick     any odd block $B\in \pi$.
    If it does not cover another odd block, we are done.
    Otherwise we choose      any odd block nested inside $B$ and
    continue the procedure recursively until an inner odd block is reached.
   \item  Let $I(B)$ be an interval. Then the complement of $I(B)$ is a union 
    of intervals and by definition those intervals which are covered 
    by $B$ are even.
    It follows that the padding interval $I(B)$ is the union of even blocks and exactly one odd
    block and therefore odd.
    Consequently $\alpha(B)$ and
    $\omega(B)$ have the same parity and exactly one of the ``outer'' intervals $[1,\alpha(B)-1]$
    and $[\omega(B)+1,2n]$ is odd (if $I(B)=[1,2,..,2k+1]$, then $[1,\alpha(B)-1]$ is empty set). 
  \end{enumerate}
\end{proof}

\begin{defi}
  Let $\pi$ be a noncrossing partition and $B,B'\in\pi$ two distinct blocks of $\pi$.

  \begin{enumerate}[1.]
\item Given a block $C\in \rho$ from another partition $\rho$, we say
  that $C$ \emph{connects} $B$ and $B'$ if both $B\cap C\ne\emptyset$ and $B'\cap C\ne\emptyset$.
\item Let $\pi\in \CompOdd{2n}$ and $B$ its  leftmost  inner odd block.
By Lemma~\ref{lem:inneroddbasic} the padding interval $I(B)$ has odd length
and therefore there is a unique standard brace $\SpPair_\pi\in  \onetwo{n}$
such that  $I(B)\cap \SpPair_\pi$ contains exactly one element.
We call $\SpPair_\pi$ the \emph{pivot brace} of $\pi$. 
The unique point in the intersection of the pivot brace and the leftmost
inner odd block is called the \emph{pivot element}.
In our figures the pivot brace will be highlighted by thick lines.
Associated to the pivot brace we call the two unique
  blocks $\BL,\BR\in \pi$ such that $\BL\cap \SpPair_\pi\neq \emptyset$ and $\BR\cap \SpPair_\pi\neq \emptyset$ the \emph{left} and \emph{right} \emph{pivot blocks} of $\pi$.
 \item For the pivot blocks we define the \emph{essentially nested blocks}, 
namely if $A\in\{\BL,\BR\}$, then 
$$
\nest(A):=\{B\mid B\in\pi \text{ and }  B \text{ nest inside } A\setminus \SpPair_\pi\},
$$
 where the notation $A\setminus \SpPair_\pi$ means that we remove those
 elements of $A$ which are included in the pivot block $\SpPair_\pi$,
see Figure~\ref{fig:nest}.
\begin{figure}[ht]
   \begin{subfigure}[b!]{0.4\textwidth}
  \centering
    \makeatletter{}\begin{tikzpicture}[scale=0.07,baseline,remember picture]
\node[color=white] at (31,27.675) {.};
\node[color=white] at (31,-5) {.};

\draw (2,0)--(2,7.5);
\draw[style=aline] (8,0)--(8,4.5);
\draw[style=aline] (14,0)--(14,4.5);
\draw[style=aline] (8,4.5)--(14,4.5);
\draw (20,0)--(20,7.5);
\draw (26,0)--(26,7.5);
\draw[style=aline] (32,0)--(32,4.5);
\draw[style=aline] (38,0)--(38,4.5);
\draw[style=aline] (32,4.5)--(38,4.5);
\draw (44,0)--(44,7.5);
\draw (50,0)--(50,7.5);
\draw (56,0)--(56,7.5);
\draw (62,0)--(62,7.5);
\draw[style=bline] (68,0)--(68,4.5);
\draw[style=bline] (74,0)--(74,4.5);
\draw[style=bline] (68,4.5)--(74,4.5);
\draw (80,0)--(80,7.5);
\draw (2,7.5)--(50,7.5);
\draw (56,7.5)--(80,7.5);

\node at (31,13) {$\BL$};
\node at (67,13) {$\BR$};

\draw (2,-9)--(2,-4.5);
\draw (8,-9)--(8,-4.5);
\draw (14,-9)--(14,-4.5);
\draw (20,-9)--(20,-4.5);
\draw (26,-9)--(26,-4.5);
\draw (32,-9)--(32,-4.5);
\draw (38,-9)--(38,-4.5);
\draw (44,-9)--(44,-4.5);
\draw[thick] (50,-9)--(50,-4.5);
\draw[thick] (56,-9)--(56,-4.5);
\draw[thick] (50,-4.5)--(56,-4.5);
\draw (62,-9)--(62,-4.5);
\draw (68,-9)--(68,-4.5);
\draw (74,-9)--(74,-4.5);
\draw (80,-9)--(80,-4.5);
\draw (2,-4.5)--(8,-4.5);
\draw (14,-4.5)--(20,-4.5);
\draw (26,-4.5)--(32,-4.5);
\draw (38,-4.5)--(44,-4.5);
\draw (62,-4.5)--(68,-4.5);
\draw (74,-4.5)--(80,-4.5);
\end{tikzpicture}

\begin{align*}
\nest(\BL)&=\{{   (2,3),(6,7)}\} \\
\nest(\BR)&=\{{(12,13)}\}
\end{align*}
  \end{subfigure}
~
  \begin{subfigure}[b!]{0.4\textwidth}
 \centering
\makeatletter{}\begin{tikzpicture}[scale=0.07,baseline,remember picture]
\node[color=white] at (31,25) {.};
\node[color=white] at (31,-5) {.};

\draw (2,0)--(2,19);
\draw (8,0)--(8,7.5);
\draw (14,0)--(14,7.5);
\draw[style=aline] (20,0)--(20,4.5);
\draw[style=aline] (26,0)--(26,4.5);
\draw (32,0)--(32,7.5);
\draw (38,0)--(38,19);
\draw[style=bline] (44,0)--(44,4.5);
\draw[style=bline] (50,0)--(50,4.5);
\draw (56,0)--(56,19);
\draw (62,0)--(62,19);
\draw[style=bline] (68,0)--(68,4.5);
\draw[style=bline] (74,0)--(74,4.5);
\draw (80,0)--(80,19);
\draw[style=aline] (20,4.5)--(26,4.5);
\draw (8,7.5)--(32,7.5);
\draw[style=bline] (44,4.5)--(50,4.5);
\draw[style=bline] (68,4.5)--(74,4.5);
\draw (2,19)--(80,19);

\node at (23,13) {$\BL$};
\node at (45,25) {$\BR$};

\draw[thick] (2,-9)--(2,-4.5);
\draw[thick] (8,-9)--(8,-4.5);
\draw (14,-9)--(14,-4.5);
\draw (20,-9)--(20,-4.5);
\draw (26,-9)--(26,-4.5);
\draw (32,-9)--(32,-4.5);
\draw (38,-9)--(38,-4.5);
\draw (44,-9)--(44,-4.5);
\draw(50,-9)--(50,-4.5);
\draw(56,-9)--(56,-4.5);
\draw(50,-4.5)--(56,-4.5);
\draw (62,-9)--(62,-4.5);
\draw (68,-9)--(68,-4.5);
\draw (74,-9)--(74,-4.5);
\draw (80,-9)--(80,-4.5);
\draw[thick] (2,-4.5)--(8,-4.5);
\draw (14,-4.5)--(20,-4.5);
\draw (26,-4.5)--(32,-4.5);
\draw (38,-4.5)--(44,-4.5);
\draw (62,-4.5)--(68,-4.5);
\draw (74,-4.5)--(80,-4.5);
\end{tikzpicture}

\begin{align*}
\nest(\BL)&=\{{               (4,5)}\} \\
 \nest(\BR)&=\{{             (8,9),(12,13)}\}.
\end{align*}
  \end{subfigure}

  \caption{Examples of pivot blocks $\BL$, $\BR$, pivot braces braces $\SpPair_\pi$ and  essentially nested blocks.} \label{fig:nest}
\end{figure}
\end{enumerate}
\end{defi}

For scalar $a,b, c\in\C$  we denote by  $\left[\begin{smallmatrix}
    c &a\\
    b& c
    \end{smallmatrix}\right]_n$
the element in $M_n(\C)$, where the diagonal elements are equal to $c$ and the  upper-triangular entries are equal to a and lower-triangular elements are $b$.

\section{An involution on $\CompOdd{2n}$}
\label{sec:involution}

We illustrate the idea of the proof of the cancellation phenomenon
on the simplest example which is the  commutator $XY-YX$.
We expand the cumulant $K_r(XY-YX)$ multilinearly, apply the product
formula \eqref{twr:produktargumentow} and obtain a sum
\begin{equation}
  \label{eq:commutatorexpansion}
\sum_{\substack{\pi\in \NC(2r) \\ \pi\vee\makeatletter{}\begin{tikzpicture}[inner sep=0pt,scale=0.04]
\draw (2,0)--(2,4.5);
\draw (8,0)--(8,4.5);
\draw (14,0)--(14,4.5);
\draw (20,0)--(20,4.5);
\node at (30,2){$\cdots$};
\draw (38,0)--(38,4.5);
\draw (44,0)--(44,4.5);
\draw (2,4.5)--(8,4.5);
\draw (14,4.5)--(20,4.5);
\draw (38,4.5)--(44,4.5);
\end{tikzpicture}
 =\hat{1}_{2r}} }  K_\pi(X_1,X_2,\dots,X_n)
\end{equation}
where for $X_1,X_2,\dots,X_n\in \{X,Y\}$;
our involution will then provide a matching of equal terms with opposite signs
roughly by shifting the endpoint of leftmost inner odd block according
to  the  pattern shown in Figure~\ref{fig:involutiontypeIII}.

However the definition of the involution is not as straightforward as it seems
at a first glance. The complication arises from the fact that
for certain partitions the leftmost inner odd block loses its property
of being leftmost after the shift, see example \eqref{eq:typeIIbad} below.
For this reason these partitions must be treated differently,
by ``flipping'' certain intervals.
Therefore we will call them \emph{flip partitions}
and their description is the content of the next subsection.

\subsection{Flip partitions} 

  \begin{lemm} \label{lemtwoinnerodd}
    Let $\pi\in\CompOdd{2n}$.     If $\pi$ has  two inner odd blocks which are connected by a standard brace
    then all other blocks of $\pi$ are even.
  \end{lemm}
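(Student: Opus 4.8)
Let $\pi\in\CompOdd{2n}$ and suppose $B_1$ and $B_2$ are two inner odd blocks of $\pi$ connected by a standard brace $\SpPair=(2k-1,2k)\in\onetwo{n}$, say with $2k-1\in B_1$ and $2k\in B_2$. We must show every block of $\pi$ other than $B_1,B_2$ is even. The plan is to exploit the defining property $\pi\vee\onetwo{n}=\hat1_{2n}$ together with the structural description of inner odd blocks from Lemma~\ref{lem:inneroddbasic}(ii).

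**Key steps.** First I would apply Lemma~\ref{lem:inneroddbasic}(ii) to $B_1$: its complement $[2n]\setminus B_1$ is a union of intervals, exactly one of which is odd, and that odd interval is one of the two ``outer'' pieces $[1,\alpha(B_1)-1]$ or $[\omega(B_1)+1,2n]$ (the intervals nested inside $B_1$ between consecutive points of $B_1$ are all even). Since $B_2$ is itself odd, $B_2$ must be contained in that unique odd outer interval; in particular $B_2$ does not nest inside $B_1$ and $B_1$ does not nest inside $B_2$ — consistent with both being inner odd — and moreover the brace $\SpPair$ straddles the boundary between $I(B_1)$ and that outer interval. Symmetrically, applying the lemma to $B_2$ puts $B_1$ in the unique odd outer interval relative to $B_2$. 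So, up to the left–right symmetry, the picture is: a single odd interval $I(B_1)$ to the left, a single odd interval $I(B_2)$ to the right, they are adjacent (their union is an interval, because $\SpPair=(2k-1,2k)$ links $\omega$-side of $B_1$ region to $\alpha$-side of $B_2$ region with no room in between), and everything else is even.

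**The main point.** Now let $C\in\pi$ be any block with $C\notin\{B_1,B_2\}$; I claim $C$ is even. Write $[2n]=I(B_1)\sqcup I(B_2)\sqcup(\text{outer intervals})$; by the above, all ``gap'' intervals strictly inside $I(B_1)$ or strictly inside $I(B_2)$, as well as the genuinely outer intervals $[1,\alpha(B_1)-1]$ and $[\omega(B_2)+1,2n]$, are unions of even blocks. The block $C$, being noncrossing and distinct from $B_1,B_2$, lies entirely within one such interval (it cannot cross $B_1$ or $B_2$, and it cannot meet both $I(B_1)$ and $I(B_2)$ since that would force it to cross the connecting brace structure). But I must be careful: an interval being an ``even interval'' in the sense of Lemma~\ref{lem:inneroddbasic}(ii) means it supports a sub-partition all of whose blocks are even — this is exactly what part (ii) gives for the intervals covered by an inner odd block. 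Hence $C$ is even. The remaining case to rule out is that $C$ is a third odd block sitting in one of the outer intervals $[1,\alpha(B_1)-1]$ or $[\omega(B_2)+1,2n]$: but if that outer interval contained an odd block, then $B_1$ (resp. $B_2$) would not be an \emph{inner} odd block that is ``leftmost-compatible'' — more precisely, applying Lemma~\ref{lem:inneroddbasic}(ii) to $B_1$ already asserted that the odd outer interval is \emph{the one containing $B_2$}, and a parity count shows the \emph{other} outer interval has even length, hence cannot contain an odd block without containing a crossing or another odd block, and iterating would contradict finiteness.

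**Expected obstacle.** The delicate point is the interaction between the $\vee\onetwo{n}=\hat1_{2n}$ condition and the claim that the two odd regions are genuinely \emph{adjacent} (no even block wedged between $I(B_1)$ and $I(B_2)$ that the brace $\SpPair$ would have to jump over). I would handle this by a direct connectivity argument: in the quotient of $\pi\vee\onetwo{n}$ the braces can only merge blocks that are ``brace-adjacent'', and since $\SpPair$ is the brace joining $B_1$ to $B_2$ and both are inner odd, the pivot-brace analysis (the discussion preceding Lemma~\ref{lemtwoinnerodd}, i.e.\ the padding interval $I(B_1)$ has odd length so exactly one brace meets it in a single point) forces $2k-1$ to be the point of $I(B_1)$ met by $\SpPair$ in one element, whence $2k=2k-1+1\in B_2$ sits immediately to the right, so $I(B_1)$ and $I(B_2)$ touch. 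Once adjacency is secured, the parity bookkeeping — total length $2n$ even, $|I(B_1)|$ odd, $|I(B_2)|$ odd, hence the combined outer complement has even length and splits into even blocks — closes the argument that no third odd block can exist.
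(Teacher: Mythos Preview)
Your argument correctly establishes adjacency of $I(B_1)$ and $I(B_2)$ (with $\omega(B_1)=2k-1$, $\alpha(B_2)=2k$) and correctly observes that any block nested inside $B_1$ or $B_2$ is even. The gap is in your treatment of the outer intervals $[1,\alpha(B_1)-1]$ and $[\omega(B_2)+1,2n]$. You assert that because these have even length they ``split into even blocks'' and that a third odd block would lead to a contradiction ``by iterating''; neither claim is justified. An interval of even length can perfectly well support a noncrossing partition containing two (or any even number of) odd blocks, and nothing in your parity bookkeeping or in Lemma~\ref{lem:inneroddbasic} prevents this. So as written you have not ruled out a pair of additional odd blocks sitting entirely to the left of $I(B_1)$ or to the right of $I(B_2)$.

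What is missing is the one place where the hypothesis $\pi\vee\onetwo{n}=\hat1_{2n}$ really bites: it forces the outer intervals to be \emph{empty}. Concretely, set $J=I(B_1)\cup I(B_2)=[\alpha(B_1),\omega(B_2)]$. Since $\alpha(B_1)$ is odd and $\omega(B_2)$ is even, $J$ is a union of complete standard braces; and by noncrossingness every block of $\pi$ lies either entirely inside $J$ or entirely outside it. Hence $J$ is saturated for both $\pi$ and $\onetwo{n}$, so it is a union of blocks of $\pi\vee\onetwo{n}=\hat1_{2n}$, forcing $J=[2n]$. Now every block other than $B_1,B_2$ is nested inside one of them and is therefore even. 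This is exactly the paper's argument; you invoked the upper-complement condition only to secure adjacency, whereas its essential role is this connectivity step.
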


  \begin{proof}
    If two inner odd blocks $B$ and $B'$ are connected, they must lie adjacent to each other.
    Let  $B$ be  to the left of $B'$,
    then $\omega(B)$ is odd and $\alpha(B')$
    is even, see Figure~\ref{fig:adjacentinnerodd}. But then the interval $J=I(B)\cup I(B')$ is even and the blocks
    of $\onetwo{n}$ which are contained in $J$ are not connected to those
    contained in the complement of $J$.
    Since $\pi\in \CompOdd{2n}$, it follows that $J=[2n]$ and since 
    no odd block is nested neither inside  $B$ nor inside $B'$,
    $\pi$ has no other odd blocks.
  \end{proof}

  \begin{figure}[h]
    \centering
    \makeatletter{}\begin{tikzpicture}[scale=0.07]
\draw[thline] (2,0)--(2,7.5);
\draw[thline] (20,0)--(20,7.5);
\draw[thline] (26,0)--(26,7.5);
\draw[thline] (44,0)--(44,7.5);
\draw[thline] (50,0)--(50,7.5);
\draw[thline] (2,7.5)--(50,7.5);

\draw (8,0)--(8,4.5);
\draw (14,0)--(14,4.5);

\draw (32,0)--(32,4.5);
\draw (38,0)--(38,4.5);

\draw[thline] (56,0)--(56,7.5);
\draw[thline] (62,0)--(62,7.5);
\draw[thline] (80,0)--(80,7.5);
\draw[thline] (56,7.5)--(80,7.5);

\draw (68,0)--(68,4.5);
\draw (74,0)--(74,4.5);
\draw (8,4.5)--(14,4.5);
\draw (32,4.5)--(38,4.5);
\draw (68,4.5)--(74,4.5);

\draw (2,-9)--(2,-4.5);
\draw (8,-9)--(8,-4.5);
\draw (14,-9)--(14,-4.5);
\draw (20,-9)--(20,-4.5);
\draw (26,-9)--(26,-4.5);
\draw (32,-9)--(32,-4.5);
\draw (38,-9)--(38,-4.5);
\draw (44,-9)--(44,-4.5);
\draw[thline] (50,-9)--(50,-4.5);
\draw[thline] (56,-9)--(56,-4.5);
\draw (62,-9)--(62,-4.5);
\draw (68,-9)--(68,-4.5);
\draw (74,-9)--(74,-4.5);
\draw (80,-9)--(80,-4.5);
\draw (2,-4.5)--(8,-4.5);
\draw (14,-4.5)--(20,-4.5);
\draw (26,-4.5)--(32,-4.5);
\draw (38,-4.5)--(44,-4.5);
\draw[thline] (50,-4.5)--(56,-4.5);
\draw (62,-4.5)--(68,-4.5);
\draw (74,-4.5)--(80,-4.5);
\end{tikzpicture}
 
    \caption{Two adjacent inner odd blocks.}
    \label{fig:adjacentinnerodd}
  \end{figure}
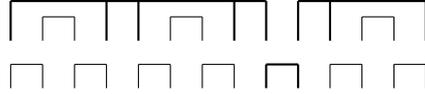

\begin{defi} An element  $\pi\in \CompOdd{2n}$ is called a
  \textit{flip partition} if it has exactly two odd blocks and the pivot brace
  connects these two blocks at their endpoints. \label{def:flippartition}
\end{defi}
Examples of flip partitions are shown in Figure~\ref{fig:flippartitions}.

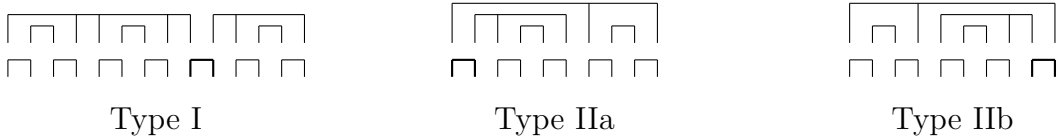
\begin{figure}[h]
  \begin{subfigure}[t]{0.3\textwidth}
 \centering
 \makeatletter{}\begin{tikzpicture}[scale=0.05]
\draw (2,0)--(2,7.5);
\draw (8,0)--(8,4.5);
\draw (14,0)--(14,4.5);
\draw (20,0)--(20,7.5);
\draw (26,0)--(26,7.5);
\draw (32,0)--(32,4.5);
\draw (38,0)--(38,4.5);
\draw (44,0)--(44,7.5);
\draw (50,0)--(50,7.5);
\draw (56,0)--(56,7.5);
\draw (62,0)--(62,7.5);
\draw (68,0)--(68,4.5);
\draw (74,0)--(74,4.5);
\draw (80,0)--(80,7.5);
\draw (8,4.5)--(14,4.5);
\draw (32,4.5)--(38,4.5);
\draw (2,7.5)--(50,7.5);
\draw (68,4.5)--(74,4.5);
\draw (56,7.5)--(80,7.5);

\draw (2,-9)--(2,-4.5);
\draw (8,-9)--(8,-4.5);
\draw (14,-9)--(14,-4.5);
\draw (20,-9)--(20,-4.5);
\draw (26,-9)--(26,-4.5);
\draw (32,-9)--(32,-4.5);
\draw (38,-9)--(38,-4.5);
\draw (44,-9)--(44,-4.5);
\draw[thline] (50,-9)--(50,-4.5);
\draw[thline] (56,-9)--(56,-4.5);
\draw (62,-9)--(62,-4.5);
\draw (68,-9)--(68,-4.5);
\draw (74,-9)--(74,-4.5);
\draw (80,-9)--(80,-4.5);
\draw (2,-4.5)--(8,-4.5);
\draw (14,-4.5)--(20,-4.5);
\draw (26,-4.5)--(32,-4.5);
\draw (38,-4.5)--(44,-4.5);
\draw[thline] (50,-4.5)--(56,-4.5);
\draw (62,-4.5)--(68,-4.5);
\draw (74,-4.5)--(80,-4.5);
\end{tikzpicture} 
\vskip1ex

  Type I
 \end{subfigure}
~
  \begin{subfigure}[t]{0.3\textwidth}
 \centering
 \makeatletter{}
\begin{tikzpicture}[scale=0.05]
\draw (2,0)--(2,10.5);
\draw (8,0)--(8,7.5);
\draw (14,0)--(14,7.5);
\draw (20,0)--(20,4.5);
\draw (26,0)--(26,4.5);
\draw (32,0)--(32,7.5);
\draw (38,0)--(38,10.5);
\draw (44,0)--(44,4.5);
\draw (50,0)--(50,4.5);
\draw (56,0)--(56,10.5);
\draw (20,4.5)--(26,4.5);
\draw (8,7.5)--(32,7.5);
\draw (44,4.5)--(50,4.5);
\draw (2,10.5)--(56,10.5);

\draw[thline] (2,-9)--(2,-4.5);
\draw[thline] (8,-9)--(8,-4.5);
\draw (14,-9)--(14,-4.5);
\draw (20,-9)--(20,-4.5);
\draw (26,-9)--(26,-4.5);
\draw (32,-9)--(32,-4.5);
\draw (38,-9)--(38,-4.5);
\draw (44,-9)--(44,-4.5);
\draw (50,-9)--(50,-4.5);
\draw (56,-9)--(56,-4.5);
\draw[thline] (2,-4.5)--(8,-4.5);
\draw (14,-4.5)--(20,-4.5);
\draw (26,-4.5)--(32,-4.5);
\draw (38,-4.5)--(44,-4.5);
\draw (50,-4.5)--(56,-4.5);
\end{tikzpicture}

\vskip1ex

  Type IIa
 \end{subfigure}
~
  \begin{subfigure}[t]{0.3\textwidth}
 \centering
\makeatletter{}\begin{tikzpicture}[scale = 0.05]
\draw (2,0)--(2,10.5);
\draw (8,0)--(8,4.5);
\draw (14,0)--(14,4.5);
\draw (20,0)--(20,10.5);
\draw (26,0)--(26,7.5);
\draw (32,0)--(32,4.5);
\draw (38,0)--(38,4.5);
\draw (44,0)--(44,7.5);
\draw (50,0)--(50,7.5);
\draw (56,0)--(56,10.5);
\draw (8,4.5)--(14,4.5);
\draw (32,4.5)--(38,4.5);
\draw (26,7.5)--(50,7.5);
\draw (2,10.5)--(56,10.5);

\draw (2,-9)--(2,-4.5);
\draw (8,-9)--(8,-4.5);
\draw (14,-9)--(14,-4.5);
\draw (20,-9)--(20,-4.5);
\draw (26,-9)--(26,-4.5);
\draw (32,-9)--(32,-4.5);
\draw (38,-9)--(38,-4.5);
\draw (44,-9)--(44,-4.5);
\draw[thline] (50,-9)--(50,-4.5);
\draw[thline] (56,-9)--(56,-4.5);
\draw (2,-4.5)--(8,-4.5);
\draw (14,-4.5)--(20,-4.5);
\draw (26,-4.5)--(32,-4.5);
\draw (38,-4.5)--(44,-4.5);
\draw[thline] (50,-4.5)--(56,-4.5);
\end{tikzpicture} 

\vskip1ex

  Type IIb
 \end{subfigure}
  \caption{Examples of flip partitions}
  \label{fig:flippartitions}
\end{figure}
The next lemma provides us with a classification of flip partitions
which will be essential for the definition of the involution.

\begin{lemm} \label{lemma:symetryczne}
Let   $\pi\in \CompOdd{2n}$ be a  flip  partition. Then either
 \begin{enumerate}[I.]
\item \label{it:symetryczne:1} $\pi$  has two  inner odd blocks; 
\item \label{it:symetryczne:2}  $\pi$  has exactly one inner odd block and the pivot brace is  either 
\begin{enumerate}[(a)]
 \item $\SpPair_\pi=(1,2)$
 \item $\SpPair_\pi=(2n-1,2n)$.
\end{enumerate}
\end{enumerate}    
\end{lemm}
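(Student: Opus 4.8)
The plan is to analyze a flip partition $\pi\in\CompOdd{2n}$ by exploiting the definition directly: $\pi$ has exactly two odd blocks, say $B_1$ and $B_2$, and the pivot brace $\SpPair_\pi$ connects them, meeting each in exactly one point, and moreover it meets them \emph{at their endpoints}. First I would recall from Lemma~\ref{lem:inneroddbasic}(i) that $\pi$ has at least one inner odd block, and pick the leftmost one, call it $B$; by the definition of the pivot brace, $I(B)\cap\SpPair_\pi$ is a single point, and since $\pi$ is a flip partition this point is an endpoint of $B$ and the other point of $\SpPair_\pi$ is an endpoint of the other odd block. The key dichotomy is whether that other odd block is also inner or not.

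The main case split runs as follows. Case~(A): both $B_1$ and $B_2$ are inner odd blocks. This is exactly alternative~\ref{it:symetryczne:1}, so there is nothing more to prove here (Lemma~\ref{lemtwoinnerodd} then even tells us all remaining blocks are even, consistent with the flip-partition hypothesis). Case~(B): exactly one of the two odd blocks is inner — say $B$ is the (leftmost) inner odd block and $B'$ is the other odd block, which therefore has an odd block nested inside it; but the only odd blocks available are $B$ and $B'$ themselves, so $B$ must be nested inside $B'$. Here I would use the fact that $\SpPair_\pi$ connects $B$ and $B'$ at their endpoints: the pivot brace is $(j,j+1)$ with one of $j,j+1$ an endpoint of $B$ and the other an endpoint of $B'$. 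Because $I(B)\subseteq I(B')$, the nesting $\alpha(B')<\alpha(B)\le\omega(B)<\omega(B')$ must be compatible with the two endpoints being adjacent. I expect this forces either $\alpha(B')=\alpha(B)-1$ with $\alpha(B')=1$ (so $\SpPair_\pi=(1,2)=(\alpha(B'),\alpha(B))$), giving case~\ref{it:symetryczne:2}(a), or symmetrically $\omega(B')=\omega(B)+1=2n$, giving $\SpPair_\pi=(2n-1,2n)$, i.e.\ case~\ref{it:symetryczne:2}(b). The reason a pivot brace of the form $(1,2)$ or $(2n-1,2n)$ is forced (rather than an interior brace) is that if the adjacent pair straddling $B$ and $B'$ sat strictly inside $[2n]$, then the interval between the two odd blocks together with $B$ would be separated from the rest by an even "gap", and one uses the condition $\pi\in\CompOdd{2n}$ — i.e.\ $\pi\vee\onetwo{n}=\hat1_{2n}$ — exactly as in the proof of Lemma~\ref{lemtwoinnerodd} to exclude this: the standard braces inside that interval would fail to connect to those outside.

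I would organize the argument so that the connectivity bookkeeping in Case~(B) mirrors the proof of Lemma~\ref{lemtwoinnerodd}: identify the interval $J$ bounded by the outer endpoints involved, observe that by Lemma~\ref{lem:inneroddbasic}(ii) the complement $[2n]\setminus B'$ is a union of intervals with exactly one odd, check that the standard braces of $\onetwo{n}$ contained in the "wrong" intervals cannot be joined across, and conclude that $J$ must exhaust $[2n]$, which pins $\SpPair_\pi$ to $(1,2)$ or $(2n-1,2n)$. The step I expect to be the main obstacle is precisely this parity-and-connectivity argument in Case~(B): one must carefully track which of the two "outer" intervals $[1,\alpha(B')-1]$ and $[\omega(B')+1,2n]$ is the odd one, where the pivot element sits relative to $\alpha(B),\omega(B),\alpha(B'),\omega(B')$, and why the only way for $\onetwo{n}$-braces to bridge $I(B')$ and its complement is for one endpoint of $B'$ to be at position $1$ or $2n$; everything else (the existence of the inner odd block, the shape of the pivot brace on $I(B)$) is immediate from the preliminary lemmas.
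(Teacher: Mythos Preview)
Your plan is correct and follows essentially the same route as the paper: split into the case where both odd blocks are inner (type~I) versus the case where $B$ is nested inside $B'$, and in the latter case use the parity of $I(B')$ together with the connectivity constraint $\pi\vee\onetwo{n}=\hat{1}_{2n}$ to force $I(B')=[2n]$, pinning the pivot brace to $(1,2)$ or $(2n-1,2n)$.

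One small correction: you cannot invoke Lemma~\ref{lem:inneroddbasic}(ii) for $B'$, since that lemma applies only to \emph{inner} odd blocks and $B'$ is outer. Instead, argue directly (as the paper does) that $I(B')$ is the union of the two odd blocks $B,B'$ and some even blocks, hence $\lvert I(B')\rvert$ is even and $\alpha(B'),\omega(B')$ have different parity; the pivot brace condition then forces $\alpha(B')$ odd and $\omega(B')$ even, so no standard brace straddles the boundary of $I(B')$ and the connectivity argument goes through exactly as you sketch.
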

We refer to flip partitions of type I, IIa and IIb according to this scheme,
see Figure~\ref{fig:flippartitions}.
\begin{proof}
Suppose that $\pi$ is a flip partition and condition \eqref{it:symetryczne:1} 
is not satisfied. Then $\pi$ has one inner odd block $B$ and one 
outer odd block $B'$.  The padding interval $I(B')$ of the outer
odd block is the union of $B\cup B'$ and some even blocks
and therefore has an even number of elements. It follows that
$\alpha(B')$ and $\omega(B')$ have different parity. 
Now by assumption $B$ and $B'$ are connected by a standard brace
at their endpoints; either they are connected at their left endpoints,
or at their right endpoints and in either case we conclude that
 $\alpha(B')$ is odd and $\omega(B')$ is even. This implies that
all  standard braces outside $I(B')$ are separated from the rest,
and since $\pi\in  \CompOdd{2n}$ it follows that $I(B')$ is the full intervall
$[2n]$, i.e.,  we have indeed type IIa or type IIb.
\end{proof}

As a corollary we obtain the following decomposition of flip partitions
which plays a major role in the  involution to be defined below. 
\begin{cor}
Any flip  partition $\pi$ can be decomposed as a disjoint union
 $$\pi=\nest(\BL)\cup \nest(\BR)\cup \{\BL\} \cup \{\BR\},$$ 
\end{cor}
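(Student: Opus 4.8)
The plan is to run the case analysis of Lemma~\ref{lemma:symetryczne} and to reduce everything to a single geometric fact: that the three sets $I(\BL\setminus\SpPair_\pi)$, $\SpPair_\pi$ and $I(\BR\setminus\SpPair_\pi)$ tile the interval $[2n]$. I would first record two immediate consequences of $\pi$ being a flip partition. Since $\pi$ has exactly two odd blocks and the pivot brace $\SpPair_\pi$ meets both of them while consisting of only two points, these two odd blocks are precisely $\BL$ and $\BR$, one point of $\SpPair_\pi$ lying in each; hence $\SpPair_\pi\subseteq\BL\cup\BR$, and every block $B\in\pi$ with $B\ne\BL,\BR$ is even.

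Next comes the geometric fact. By the proof of Lemma~\ref{lemtwoinnerodd}, in type~I the blocks $\BL$ and $\BR$ are adjacent inner odd blocks with $I(\BL)\sqcup I(\BR)=[2n]$ and $\SpPair_\pi$ joins their adjacent endpoints, while by the proof of Lemma~\ref{lemma:symetryczne}, in types~IIa and~IIb one of $\BL,\BR$ is the unique inner odd block, the other is the outer odd block $B'$ with $I(B')=[2n]$, and $\SpPair_\pi$ is the standard brace at the corresponding end of $[2n]$. In every case the desired tiling $[2n]=I(\BL\setminus\SpPair_\pi)\sqcup\SpPair_\pi\sqcup I(\BR\setminus\SpPair_\pi)$ comes down to the assertion that no $\pi$-gap (a maximal run of points in $[2n]\setminus(\BL\cup\BR)$) abuts the pivot brace. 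This is the only place where the hypothesis $\pi\in\CompOdd{2n}$, i.e.\ $\pi\vee\onetwo{n}=\hat{1}_{2n}$, is needed, and I expect it to be the crux of the argument: a parity count of the kind already used in the proofs of Lemmas~\ref{lem:inneroddbasic}--\ref{lemma:symetryczne} shows that the two endpoints of such an abutting gap $G$ would have the parities that force every standard brace meeting $G$ to lie entirely inside $G$; since every block of $\pi$ meeting $G$ also lies inside $G$ by the noncrossing property and maximality of $G$, the set $G$ would be a union of connected components of $\pi\vee\onetwo{n}$ disjoint from the nonempty remainder, contradicting $\pi\vee\onetwo{n}=\hat{1}_{2n}$.

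Granting the tiling, the conclusion should follow routinely. I would take an arbitrary block $B\in\pi$ with $B\ne\BL,\BR$; it is even. The noncrossing property first gives $I(B)\cap\SpPair_\pi=\emptyset$: if $I(B)$ met $I(\BL)$ it would be nested inside it (it cannot surround $\BL$, whose padding interval reaches an endpoint of $[2n]$ or whose neighbouring point lies in $\BR$), hence $I(B)$ would contain no point of $\BL$, in particular not the point of $\SpPair_\pi$ in $\BL$; the $\BR$ side is symmetric. By the tiling, $I(B)$ is then contained in $I(\BL\setminus\SpPair_\pi)$ or in $I(\BR\setminus\SpPair_\pi)$, and since the endpoints of those intervals lie in $\BL$ resp.\ $\BR$ and hence not in $B$, the inclusion is strict at both ends; that is exactly $B\in\nest(\BL)$ resp.\ $B\in\nest(\BR)$. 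This shows $\pi=\nest(\BL)\cup\nest(\BR)\cup\{\BL\}\cup\{\BR\}$. Disjointness then follows from the same facts: $\BL\ne\BR$; no block nests inside itself; $\BL\notin\nest(\BR)$ and $\BR\notin\nest(\BL)$ since $I(\BL)$ and $I(\BR)$ each contain a point of $\SpPair_\pi$, which by the tiling lies in neither $I(\BR\setminus\SpPair_\pi)$ nor $I(\BL\setminus\SpPair_\pi)$; and $\nest(\BL)\cap\nest(\BR)=\emptyset$ because $I(\BL\setminus\SpPair_\pi)$ and $I(\BR\setminus\SpPair_\pi)$ are disjoint.
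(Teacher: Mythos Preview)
Your argument is essentially correct and considerably more detailed than what the paper offers: the paper states this corollary without any proof, treating it as an immediate consequence of Lemma~\ref{lemma:symetryczne}. Your strategy of reducing everything to the tiling $[2n]=I(\BL\setminus\SpPair_\pi)\sqcup\SpPair_\pi\sqcup I(\BR\setminus\SpPair_\pi)$ and then ruling out obstructing gaps via a parity-and-connectivity argument is sound, and the final paragraph deriving the decomposition and its disjointness from the tiling is clean.

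There is, however, one imprecision in your middle paragraph. You assert that the tiling ``comes down to the assertion that no $\pi$-gap \dots\ abuts the pivot brace.'' In type~I this is accurate, but in types~IIa and~IIb it is not quite enough. Take type~IIa with inner odd block $B$ (so $\alpha(B)=2$) and outer odd block $B'$ (so $\alpha(B')=1$, $\omega(B')=2n$). For the tiling you need not only that the second element of $B$ equals~$3$ (ruling out a gap adjacent to the pivot brace $\{1,2\}$) but also that the second element of $B'$ equals $\omega(B)+1$. The potential gap $[\omega(B)+1,\,p-1]$, where $p$ is the second element of $B'$, does \emph{not} abut the pivot brace when $|B|>1$. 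Fortunately your parity argument applies to this gap as well: since all blocks between $1$ and $p$ other than $B$ are even, $p-2$ is odd, so $p$ is odd and $p-1$ is even; and $\omega(B)$ is even (because $I(B)$ has odd length and $\alpha(B)=2$), so $\omega(B)+1$ is odd. Hence every standard brace meeting this gap lies inside it, and the same isolation argument gives the contradiction with $\pi\vee\onetwo{n}=\hat 1_{2n}$. So the fix is simply to broaden your hypothesis from ``no gap abuts the pivot brace'' to ``no gap lies between the pivot brace and the far end of $I(\BL\setminus\SpPair_\pi)$ or $I(\BR\setminus\SpPair_\pi)$,'' and your parity count already handles all of these.
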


The remaining odd partitions make up the last type.
\begin{defi}
  A partition $\pi\in \CompOdd{2n}$ which is not a flip partition
  is called type III. More specifically, it is type IIIa if
  the smallest element of the leftmost inner odd block is even,
  i.e., the left end point is the pivot element.
  It is  type IIIb if  the smallest element of the leftmost inner odd block is odd, i.e., the right end point is the pivot element.
\end{defi}

\subsection{Definition of the involution}
\label{ssec:involution}
We have now everything in place to define a sign-inverting involution on  $\CompOdd{2n}$, 
which simultaneously switches the sign of the corresponding term in the
expansion~\eqref{eq:commutatorexpansion}.
The involution acts on each type separately
and follows the patterns layed out in
Figures~\ref{fig:involutiontypeI}, \ref{fig:involutiontypeII} and~\ref{fig:involutiontypeIII}.

Types~I and~II are flip partitions
and the two odd blocks are flipped in such a way that
the decomposition \eqref{it:symetryczne:1} and \eqref{it:symetryczne:2} of
Lemma~\ref{lemma:symetryczne}, respectively, is preserved.
In type~III the pivot element is moved from one end of the leftmost
inner odd block to the other.
The braces are preserved except on the pivot brace, which is reversed.
Hereby the types are preserved, more precisely:

\begin{enumerate}[(I)]
 \item A partition of type I is mapped to type I, see Figures~\ref{fig:involutiontypeI}.
    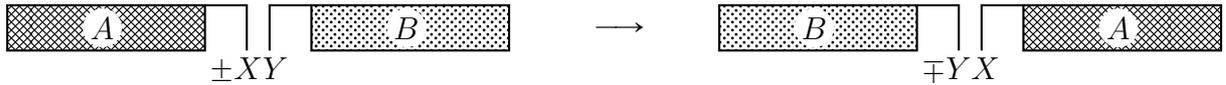
\begin{figure}[h]
    \begin{subfigure}[c]{0.35\textwidth}
      \centering
      \makeatletter{}
\begin{tikzpicture}[scale=0.5]
    \path [afill,thick] (0,0) rectangle (5.2,1.2);
    \path [bfill,thick] (8.0,0) rectangle (13.2,1.2);
\draw [thick] (5.2,1.2)--(6.3,1.2)--(6.3,0);
\draw [thick] (8.0,1.2)--(6.9,1.2)--(6.9,0);

\draw[white,fill=white] (2.5,0.6) circle (0.5); 
  \node at (2.5,0.6) {$A$};

\draw[white,fill=white] (10.5,0.6) circle (0.5); 
  \node at (10.5,0.6) {$B$};
\node at (6.35,-0.5){$\pm X Y$};

\node[color=white] at (1,2) {.};
\end{tikzpicture} 
    \end{subfigure}
    $\qquad\qquad\longrightarrow\qquad$
    \begin{subfigure}[c]{0.35\textwidth}
      \centering
      \makeatletter{}\begin{tikzpicture}[scale=0.5]
    \path [bfill,thick] (0,0) rectangle (5.2,1.2);
    \path [afill,thick] (8.0,0) rectangle (13.2,1.2);
\draw [thick] (5.2,1.2)--(6.3,1.2)--(6.3,0);
\draw [thick] (8.0,1.2)--(6.9,1.2)--(6.9,0);

\draw[white,fill=white] (2.5,0.6) circle (0.5); 
  \node at (2.5,0.6) {$B$};

\draw[white,fill=white] (10.5,0.6) circle (0.5); 
  \node at (10.5,0.6) {$A$};
\node at (6.35,-0.5){$\mp Y X$};

\node[color=white] at (1,2) {.};
\end{tikzpicture} 
    \end{subfigure}
    
    \caption{Involution of partitions of type I.} \label{fig:involutiontypeI}
  \end{figure}

  The length of the padding interval of the leftmost inner
  odd block (marked red in the diagram) is an odd number, say $2k+1$.
  Then the length of the padding interval of the other odd block is $2k'+1$, where $k'=r-k-1$.
  Then the intervals $[1,2,\dots, 2k]$ and $[2k+3,2k+4,\dots,2r]$ are
  flipped and the points $2k+1$ and $2k+2$ are exchanged;
  more precisely, the entries are mapped according to the action of the following permutation:
  \begin{equation}
    \sigma_{I,r,k}:
     i \mapsto
     \begin{cases}
       i+2k'+2 & \text{for $1\leq i\leq 2k$ }\\
       2k'+2   & \text{for $i=2k+1$}\\
       2k'+1   & \text{for $i=2k+2$}\\
       i-2k-2 & \text{for $2k+3\leq i\leq 2r$ }
     \end{cases}
     ;
  \end{equation}
  see Figure~\ref{fig:exinvolutiontypeI} for a specific example.
  It is easy to see that $\sigma_{I,r,k'}$ is the inverse of $\sigma_{I,r,k}$.
  \begin{figure}[h]
    \begin{subfigure}[c]{0.3\textwidth}
      \centering
      \makeatletter{}

\begin{tikzpicture}[scale=0.04]
\draw[style=aline] (2,0)--(2,7.5);
\draw[style=aline] (8,0)--(8,4.5);
\draw[style=aline] (14,0)--(14,4.5);
\draw[style=aline] (20,0)--(20,4.5);
\draw[style=aline] (26,0)--(26,4.5);
\draw[style=aline] (32,0)--(32,4.5);
\draw[style=aline] (38,0)--(38,4.5);
\draw[style=aline] (44,0)--(44,7.5);
\draw[style=aline] (2,7.5)--(44,7.5);
\draw[style=aline] (8,4.5)--(14,4.5);
\draw[style=aline] (20,4.5)--(38,4.5);

\draw (50,0)--(50,7.5)--(44,7.5);
\draw (56,0)--(56,7.5)--(62,7.5);

\draw[style=bline] (62,0)--(62,7.5);
\draw[style=bline] (68,0)--(68,4.5);
\draw[style=bline] (74,0)--(74,4.5);
\draw[style=bline] (80,0)--(80,4.5);
\draw[style=bline] (86,0)--(86,4.5);
\draw[style=bline] (92,0)--(92,4.5);
\draw[style=bline] (98,0)--(98,4.5);
\draw[style=bline] (104,0)--(104,4.5);
\draw[style=bline] (110,0)--(110,4.5);
\draw[style=bline] (116,0)--(116,7.5);
\draw[style=bline] (68,4.5)--(74,4.5);
\draw[style=bline] (80,4.5)--(86,4.5);
\draw[style=bline] (92,4.5)--(110,4.5);
\draw[style=bline] (62,7.5)--(116,7.5);

\draw (2,-9)--(2,-4.5);
\draw (8,-9)--(8,-4.5);
\draw (14,-9)--(14,-4.5);
\draw (20,-9)--(20,-4.5);
\draw (26,-9)--(26,-4.5);
\draw (32,-9)--(32,-4.5);
\draw (38,-9)--(38,-4.5);
\draw (44,-9)--(44,-4.5);
\draw[thick] (50,-9)--(50,-4.5);
\draw[thick] (56,-9)--(56,-4.5);
\draw (62,-9)--(62,-4.5);
\draw (68,-9)--(68,-4.5);
\draw (74,-9)--(74,-4.5);
\draw (80,-9)--(80,-4.5);
\draw (86,-9)--(86,-4.5);
\draw (92,-9)--(92,-4.5);
\draw (98,-9)--(98,-4.5);
\draw (104,-9)--(104,-4.5);
\draw (110,-9)--(110,-4.5);
\draw (116,-9)--(116,-4.5);
\draw (2,-4.5)--(8,-4.5);
\draw (14,-4.5)--(20,-4.5);
\draw (26,-4.5)--(32,-4.5);
\draw (38,-4.5)--(44,-4.5);
\draw[thick] (50,-4.5)--(56,-4.5);
\draw (62,-4.5)--(68,-4.5);
\draw (74,-4.5)--(80,-4.5);
\draw (86,-4.5)--(92,-4.5);
\draw (98,-4.5)--(104,-4.5);
\draw (110,-4.5)--(116,-4.5);

\node at (53,-15) {$\pm$};
\end{tikzpicture} 
    \end{subfigure}
    $\longrightarrow$
    \begin{subfigure}[c]{0.3\textwidth}
      \centering
      \makeatletter{}\begin{tikzpicture}[scale=0.04]
\draw[style=bline] (2,0)--(2,7.5);
\draw[style=bline] (8,0)--(8,4.5);
\draw[style=bline] (14,0)--(14,4.5);
\draw[style=bline] (20,0)--(20,4.5);
\draw[style=bline] (26,0)--(26,4.5);
\draw[style=bline] (32,0)--(32,4.5);
\draw[style=bline] (38,0)--(38,4.5);
\draw[style=bline] (44,0)--(44,4.5);
\draw[style=bline] (50,0)--(50,4.5);
\draw[style=bline] (56,0)--(56,7.5);

\draw (62,0)--(62,7.5)--(56,7.5);
\draw (68,0)--(68,7.5)--(74,7.5);

\draw[style=aline] (74,0)--(74,7.5);
\draw[style=aline] (80,0)--(80,4.5);
\draw[style=aline] (86,0)--(86,4.5);
\draw[style=aline] (92,0)--(92,4.5);
\draw[style=aline] (98,0)--(98,4.5);
\draw[style=aline] (104,0)--(104,4.5);
\draw[style=aline] (110,0)--(110,4.5);
\draw[style=aline] (116,0)--(116,7.5);

\draw[style=bline] (8,4.5)--(14,4.5);
\draw[style=bline] (20,4.5)--(26,4.5);
\draw[style=bline] (32,4.5)--(50,4.5);
\draw[style=bline] (2,7.5)--(56,7.5);

\draw[style=aline] (80,4.5)--(86,4.5);
\draw[style=aline] (92,4.5)--(110,4.5);
\draw[style=aline] (74,7.5)--(116,7.5);

\draw (2,-9)--(2,-4.5);
\draw (8,-9)--(8,-4.5);
\draw (14,-9)--(14,-4.5);
\draw (20,-9)--(20,-4.5);
\draw (26,-9)--(26,-4.5);
\draw (32,-9)--(32,-4.5);
\draw (38,-9)--(38,-4.5);
\draw (44,-9)--(44,-4.5);
\draw (50,-9)--(50,-4.5);
\draw (56,-9)--(56,-4.5);
\draw[thick] (62,-9)--(62,-4.5);
\draw[thick] (68,-9)--(68,-4.5);
\draw (74,-9)--(74,-4.5);
\draw (80,-9)--(80,-4.5);
\draw (86,-9)--(86,-4.5);
\draw (92,-9)--(92,-4.5);
\draw (98,-9)--(98,-4.5);
\draw (104,-9)--(104,-4.5);
\draw (110,-9)--(110,-4.5);
\draw (116,-9)--(116,-4.5);
\draw (2,-4.5)--(8,-4.5);
\draw (14,-4.5)--(20,-4.5);
\draw (26,-4.5)--(32,-4.5);
\draw (38,-4.5)--(44,-4.5);
\draw (50,-4.5)--(56,-4.5);
\draw[thick] (62,-4.5)--(68,-4.5);
\draw (74,-4.5)--(80,-4.5);
\draw (86,-4.5)--(92,-4.5);
\draw (98,-4.5)--(104,-4.5);
\draw (110,-4.5)--(116,-4.5);

\node at (65,-15) {$\mp$};

\end{tikzpicture}

    \end{subfigure}
    \vskip1ex
    \caption{An example of the involution of partitions of type I.} \label{fig:exinvolutiontypeI}
  \end{figure}
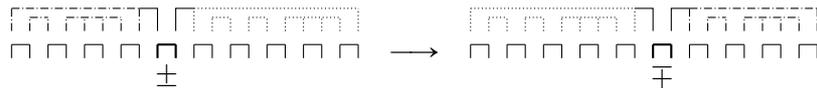

 \item type IIa is mapped to type IIb and vice versa, see
  Figure~\ref{fig:involutiontypeII}.
    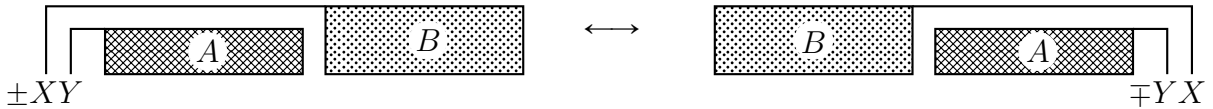
\begin{figure}[h]
    \begin{subfigure}[c]{0.35\textwidth}
      \centering
      \makeatletter{}
\begin{tikzpicture}[scale=0.5]
\draw [white] (0,3.5) circle (0);
    \path [afill,thick] (1.8,0) rectangle (7.0,1.2);
    \path [bfill,thick] (7.6,0) rectangle (12.8,1.8);
\draw [thick] (0.25,0)--(0.25,1.8)--(7.6,1.8);
\draw [thick] (0.9,0)--(0.9,1.2)--(1.8,1.2);

\draw[white,fill=white] (4.5,0.6) circle (0.5); 
  \node at (4.5,0.6) {$A$};

\draw[white,fill=white] (10.3,0.9) circle (0.5); 
  \node at (10.3,0.9) {$B$};
\node at (0.2,-0.5){$\pm X Y$};
\end{tikzpicture} 
    \end{subfigure}
    $\qquad\qquad\longleftrightarrow\qquad$
    \begin{subfigure}[c]{0.35\textwidth}
      \centering
      \makeatletter{}
\begin{tikzpicture}[scale=0.5]
\draw [white] (0,3.5) circle (0);
    \path [bfill,thick] (0,0) rectangle (5.2,1.8);
    \path [afill,thick] (5.8,0) rectangle (11.0,1.2);
\draw [thick] (12.55,0)--(12.55,1.8)--(5.2,1.8);
\draw [thick] (11.9,0)--(11.9,1.2)--(11.0,1.2);

\draw[white,fill=white] (2.6,0.9) circle (0.5); 
  \node at (2.6,0.85) {$B$};

\draw[white,fill=white] (8.5,0.6) circle (0.5); 
  \node at (8.5,0.6) {$A$};
\node at (11.9,-0.5){$\mp Y X$};
\end{tikzpicture} 
    \end{subfigure}
    \caption{Involution of partitions of type II.} \label{fig:involutiontypeII}
  \end{figure}

  The length of the padding interval of the leftmost inner
  odd block (marked red in the diagram) is an odd number, say $2k+1$ and let $k'=r-k-1$.
 
  In the case of type IIa the interval $[3,4,\dots ,2k]$ is flipped with the interval
  $[2k+1,2k+2,\dots,2r]$ and the pair $(1,2)$ is mapped to the pair $(2r,2r-1)$
  (notice the change of order);   more precisely, the entries are mapped according to the action of the following permutation:
  \begin{equation}
    \sigma_{IIa,r,k}:
     i \mapsto
     \begin{cases}
       2r   & \text{for $i=1$}\\
       2r-1   & \text{for $i=2$}\\       
       i + 2k' & \text{for $3\leq i\leq 2k$ }\\
       i-2k-2 & \text{for $2k+3\leq i\leq 2r$ }
     \end{cases}
     ;
  \end{equation}
  see Figure~\ref{fig:exinvolutiontypeII} for a specific example.

  In the case of type IIb we reverse the above process.
  Now the interval $[1,2,\dots ,2k]$ is flipped with the interval
  $[2k+1,2k+2,\dots,2r-2]$ and the pair $(2r,2r-1)$ is mapped to the pair  $(1,2)$;
  more precisely, the entries are mapped according to the action of the following permutation:
  \begin{equation}
    \sigma_{IIb,r,k}:
     i \mapsto
     \begin{cases}
       i + 2k'+2 & \text{for $1\leq i\leq 2k$ }\\
       i-2k+2 & \text{for $2k+1\leq i\leq 2r-2$ }\\
       2   & \text{for $i=2r-1$}\\
       1   & \text{for $i=2r$}
     \end{cases}
     .
  \end{equation}

  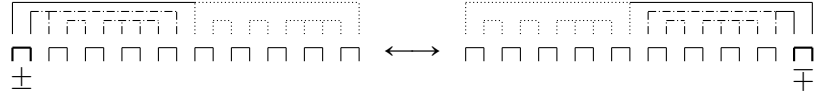
\begin{figure}[h]
    \begin{subfigure}[c]{0.3\textwidth}
      \centering
      \makeatletter{}
\begin{tikzpicture}[scale=0.04]
\draw (2,0)--(2,10.5);
\draw (8,0)--(8,7.5);
\draw[style=aline] (14,0)--(14,7.5);
\draw[style=aline] (20,0)--(20,4.5);
\draw[style=aline] (26,0)--(26,4.5);
\draw[style=aline] (32,0)--(32,4.5);
\draw[style=aline] (38,0)--(38,4.5);
\draw[style=aline] (44,0)--(44,4.5);
\draw[style=aline] (50,0)--(50,4.5);
\draw[style=aline] (56,0)--(56,7.5);
\draw[style=aline] (14,7.5)--(56,7.5);
\draw[style=aline] (20,4.5)--(26,4.5);
\draw[style=aline] (32,4.5)--(50,4.5);
\draw[style=aline] (8,7.5)--(14,7.5);

\draw[style=bline] (62,0)--(62,10.5);
\draw[style=bline] (68,0)--(68,4.5);
\draw[style=bline] (74,0)--(74,4.5);
\draw[style=bline] (80,0)--(80,4.5);
\draw[style=bline] (86,0)--(86,4.5);
\draw[style=bline] (92,0)--(92,4.5);
\draw[style=bline] (98,0)--(98,4.5);
\draw[style=bline] (104,0)--(104,4.5);
\draw[style=bline] (110,0)--(110,4.5);
\draw[style=bline] (116,0)--(116,10.5);
\draw[style=bline] (68,4.5)--(74,4.5);
\draw[style=bline] (80,4.5)--(86,4.5);
\draw[style=bline] (92,4.5)--(110,4.5);
\draw[style=bline] (62,10.5)--(116,10.5);

\draw (2,10.5)--(62,10.5);

\draw[thick] (2,-9)--(2,-4.5);
\draw[thick] (8,-9)--(8,-4.5);
\draw (14,-9)--(14,-4.5);
\draw (20,-9)--(20,-4.5);
\draw (26,-9)--(26,-4.5);
\draw (32,-9)--(32,-4.5);
\draw (38,-9)--(38,-4.5);
\draw (44,-9)--(44,-4.5);
\draw (50,-9)--(50,-4.5);
\draw (56,-9)--(56,-4.5);
\draw (62,-9)--(62,-4.5);
\draw (68,-9)--(68,-4.5);
\draw (74,-9)--(74,-4.5);
\draw (80,-9)--(80,-4.5);
\draw (86,-9)--(86,-4.5);
\draw (92,-9)--(92,-4.5);
\draw (98,-9)--(98,-4.5);
\draw (104,-9)--(104,-4.5);
\draw (110,-9)--(110,-4.5);
\draw (116,-9)--(116,-4.5);
\draw[thick] (2,-4.5)--(8,-4.5);
\draw (14,-4.5)--(20,-4.5);
\draw (26,-4.5)--(32,-4.5);
\draw (38,-4.5)--(44,-4.5);
\draw (50,-4.5)--(56,-4.5);
\draw (62,-4.5)--(68,-4.5);
\draw (74,-4.5)--(80,-4.5);
\draw (86,-4.5)--(92,-4.5);
\draw (98,-4.5)--(104,-4.5);
\draw (110,-4.5)--(116,-4.5);

\node at (5,-15) {$\pm$};
\end{tikzpicture}
 
    \end{subfigure}
    $\longleftrightarrow$
    \begin{subfigure}[c]{0.3\textwidth}
      \centering
      \makeatletter{}\begin{tikzpicture}[scale=0.04]
\draw[style=bline] (2,0)--(2,10.5);
\draw[style=bline] (8,0)--(8,4.5);
\draw[style=bline] (14,0)--(14,4.5);
\draw[style=bline] (20,0)--(20,4.5);
\draw[style=bline] (26,0)--(26,4.5);
\draw[style=bline] (32,0)--(32,4.5);
\draw[style=bline] (38,0)--(38,4.5);
\draw[style=bline] (44,0)--(44,4.5);
\draw[style=bline] (50,0)--(50,4.5);
\draw[style=bline] (56,0)--(56,10.5);
\draw[style=bline] (2,10.5)--(56,10.5);

\draw[style=bline] (8,4.5)--(14,4.5);
\draw[style=bline] (20,4.5)--(26,4.5);
\draw[style=bline] (32,4.5)--(50,4.5);

\draw (56,10.5)--(116,10.5);

\draw[style=aline] (62,0)--(62,7.5);
\draw[style=aline] (68,0)--(68,4.5);
\draw[style=aline] (74,0)--(74,4.5);
\draw[style=aline] (80,0)--(80,4.5);
\draw[style=aline] (86,0)--(86,4.5);
\draw[style=aline] (92,0)--(92,4.5);
\draw[style=aline] (98,0)--(98,4.5);
\draw[style=aline] (104,0)--(104,7.5);
\draw[style=aline] (62,7.5)--(104,7.5);

\draw[style=aline] (68,4.5)--(74,4.5);
\draw[style=aline] (80,4.5)--(98,4.5);

\draw (104,7.5)--(110,7.5);

\draw (110,0)--(110,7.5);
\draw (116,0)--(116,10.5);

\draw (2,-9)--(2,-4.5);
\draw (8,-9)--(8,-4.5);
\draw (14,-9)--(14,-4.5);
\draw (20,-9)--(20,-4.5);
\draw (26,-9)--(26,-4.5);
\draw (32,-9)--(32,-4.5);
\draw (38,-9)--(38,-4.5);
\draw (44,-9)--(44,-4.5);
\draw (50,-9)--(50,-4.5);
\draw (56,-9)--(56,-4.5);
\draw (62,-9)--(62,-4.5);
\draw (68,-9)--(68,-4.5);
\draw (74,-9)--(74,-4.5);
\draw (80,-9)--(80,-4.5);
\draw (86,-9)--(86,-4.5);
\draw (92,-9)--(92,-4.5);
\draw (98,-9)--(98,-4.5);
\draw (104,-9)--(104,-4.5);
\draw[thick] (110,-9)--(110,-4.5);
\draw[thick] (116,-9)--(116,-4.5);

\draw (2,-4.5)--(8,-4.5);
\draw (14,-4.5)--(20,-4.5);
\draw (26,-4.5)--(32,-4.5);
\draw (38,-4.5)--(44,-4.5);
\draw (50,-4.5)--(56,-4.5);
\draw (62,-4.5)--(68,-4.5);
\draw (74,-4.5)--(80,-4.5);
\draw (86,-4.5)--(92,-4.5);
\draw (98,-4.5)--(104,-4.5);
\draw[thick] (110,-4.5)--(116,-4.5);

\node at (113,-15) {$\mp$};

\end{tikzpicture}
 
    \end{subfigure}
    \vskip1ex
    \caption{An example of the involution of partitions of type II.} \label{fig:exinvolutiontypeII}
  \end{figure}

 \item type IIIa is mapped to type IIIb and vice versa, see
  Figure~\ref{fig:involutiontypeIII}.
    \begin{figure}[h]

    \begin{subfigure}[c]{0.35\textwidth}
      \centering
      \makeatletter{}
\begin{tikzpicture}[scale=0.5]
\draw [white] (0,3.0) circle (0);

    \path [afill,thick] (2.0,0) rectangle (7.2,1.0);
\draw [thick] (0.7,0)--(0.7,1.6)--(7.9,1.6)--(7.9,0);
\draw [thick] (1.3,0)--(1.3,1.0)--(2.0,1.0);
\draw [dashed] (-1.8,1.6)--(10.3,1.6);

\node at (0.6,-0.5){$\pm X Y$};
\end{tikzpicture} 
    \end{subfigure}
    $\qquad\quad\longleftrightarrow\qquad$
    \begin{subfigure}[c]{0.35\textwidth}
      \centering
      \makeatletter{}
\begin{tikzpicture}[scale=0.5]
\draw [white] (0,3.0) circle (0);
    \path [afill,thick] (1.1,0) rectangle (6.3,1.0);
\draw [thick] (0.7,0)--(0.7,1.6)--(7.9,1.6)--(7.9,0);
\draw [thick] (7.2,0)--(7.2,1.0)--(6.3,1.0);
\draw [dashed] (-1.8,1.6)--(10.3,1.6);

\node at (7.2,-0.5){$\mp Y X$};
\end{tikzpicture} 
    \end{subfigure}

    \caption{Involution of partitions of type III.} \label{fig:involutiontypeIII}
  \end{figure}
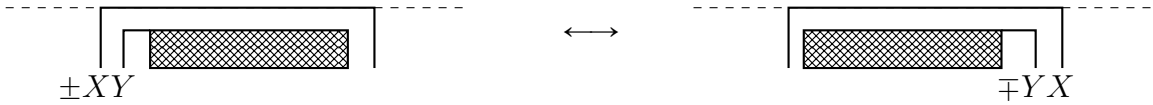
\end{enumerate}

In the case of type III we apply a rotation to the padding interval of
the leftmost inner odd block augmented by missing element from the pivot brace.
More precisely, if in type IIIa the leftmost inner odd block starts at $2k$
and ends at $2l$ then the permutation is the square of the  cycle
spanned by its padding interval together with the pivot brace:
$$
\sigma_{IIIa,r,k,l} = 
(2l,2l-1,\dots,2k-1)^2=
(2l,2l-2,\dots,2k)\circ (2l-1,2l-3,\dots,2k-1)
$$
conversely, if in type IIIb 
the leftmost inner odd block starts at $2k+1$
and ends at $2l-1$ then the permutation is the square of the corresponding cycle
$$
\sigma_{IIIb,r,k,l} = 
(2k+1,2k+2,\dots,2l)^2
=
(2k,2k+2,\dots,2l)\circ (2k+1,2k+3,\dots,2l-1)
;
$$
  see Figure~\ref{fig:exinvolutiontypeIII} for specific examples.
  \begin{figure}[h]
    \begin{subfigure}[c]{0.3\textwidth}
      \centering
      \makeatletter{}\begin{tikzpicture}[scale=0.04]
\draw (2,0)--(2,10.5);
\draw (8,0)--(8,4.5);
\draw (14,0)--(14,4.5);
\draw (20,0)--(20,10.5);
\draw (26,0)--(26,10.5);
\draw (32,0)--(32,7.5);

\draw[style=aline] (38,0)--(38,7.5);
\draw[style=aline] (44,0)--(44,4.5);
\draw[style=aline] (50,0)--(50,4.5);
\draw[style=aline] (56,0)--(56,4.5);
\draw[style=aline] (62,0)--(62,4.5);
\draw[style=aline] (68,0)--(68,4.5);
\draw[style=aline] (74,0)--(74,4.5);
\draw[style=aline] (80,0)--(80,7.5);
\draw[style=aline] (44,4.5)--(50,4.5);
\draw[style=aline] (56,4.5)--(74,4.5);
\draw[style=aline] (38,7.5)--(80,7.5);

\draw (86,0)--(86,10.5);
\draw (92,0)--(92,4.5);
\draw (98,0)--(98,4.5);
\draw (104,0)--(104,4.5);
\draw (110,0)--(110,4.5);
\draw (116,0)--(116,10.5);
\draw (8,4.5)--(14,4.5);
\draw (32,7.5)--(38,7.5);
\draw (92,4.5)--(110,4.5);
\draw (2,10.5)--(116,10.5);

\draw (2,-9)--(2,-4.5);
\draw (8,-9)--(8,-4.5);
\draw (14,-9)--(14,-4.5);
\draw (20,-9)--(20,-4.5);
\draw[thick] (26,-9)--(26,-4.5);
\draw[thick] (32,-9)--(32,-4.5);
\draw (38,-9)--(38,-4.5);
\draw (44,-9)--(44,-4.5);
\draw (50,-9)--(50,-4.5);
\draw (56,-9)--(56,-4.5);
\draw (62,-9)--(62,-4.5);
\draw (68,-9)--(68,-4.5);
\draw (74,-9)--(74,-4.5);
\draw (80,-9)--(80,-4.5);
\draw (86,-9)--(86,-4.5);
\draw (92,-9)--(92,-4.5);
\draw (98,-9)--(98,-4.5);
\draw (104,-9)--(104,-4.5);
\draw (110,-9)--(110,-4.5);
\draw (116,-9)--(116,-4.5);
\draw (2,-4.5)--(8,-4.5);
\draw (14,-4.5)--(20,-4.5);
\draw[thick] (26,-4.5)--(32,-4.5);
\draw (38,-4.5)--(44,-4.5);
\draw (50,-4.5)--(56,-4.5);
\draw (62,-4.5)--(68,-4.5);
\draw (74,-4.5)--(80,-4.5);
\draw (86,-4.5)--(92,-4.5);
\draw (98,-4.5)--(104,-4.5);
\draw (110,-4.5)--(116,-4.5);

\node at (29,-15) {$\pm$};
\end{tikzpicture}       
    \end{subfigure}
    $\longleftrightarrow$
    \begin{subfigure}[c]{0.3\textwidth}
      \centering
      \makeatletter{}\begin{tikzpicture}[scale=0.04]
\draw (2,0)--(2,10.5);
\draw (8,0)--(8,4.5);
\draw (14,0)--(14,4.5);
\draw (20,0)--(20,10.5);

\draw[style=aline] (26,0)--(26,7.5);
\draw[style=aline] (32,0)--(32,4.5);
\draw[style=aline] (38,0)--(38,4.5);
\draw[style=aline] (44,0)--(44,4.5);
\draw[style=aline] (50,0)--(50,4.5);
\draw[style=aline] (56,0)--(56,4.5);
\draw[style=aline] (62,0)--(62,4.5);
\draw[style=aline] (68,0)--(68,7.5);
\draw[style=aline] (26,7.5)--(68,7.5);
\draw[style=aline] (32,4.5)--(38,4.5);
\draw[style=aline] (44,4.5)--(62,4.5);

\draw (68,7.5)--(74,7.5);

\draw (74,0)--(74,7.5);
\draw (80,0)--(80,10.5);
\draw (86,0)--(86,10.5);
\draw (92,0)--(92,4.5);
\draw (98,0)--(98,4.5);
\draw (104,0)--(104,4.5);
\draw (110,0)--(110,4.5);
\draw (116,0)--(116,10.5);
\draw (8,4.5)--(14,4.5);
\draw (92,4.5)--(110,4.5);
\draw (2,10.5)--(116,10.5);

\draw (2,-9)--(2,-4.5);
\draw (8,-9)--(8,-4.5);
\draw (14,-9)--(14,-4.5);
\draw (20,-9)--(20,-4.5);
\draw (26,-9)--(26,-4.5);
\draw (32,-9)--(32,-4.5);
\draw (38,-9)--(38,-4.5);
\draw (44,-9)--(44,-4.5);
\draw (50,-9)--(50,-4.5);
\draw (56,-9)--(56,-4.5);
\draw (62,-9)--(62,-4.5);
\draw (68,-9)--(68,-4.5);
\draw[thick] (74,-9)--(74,-4.5);
\draw[thick] (80,-9)--(80,-4.5);
\draw (86,-9)--(86,-4.5);
\draw (92,-9)--(92,-4.5);
\draw (98,-9)--(98,-4.5);
\draw (104,-9)--(104,-4.5);
\draw (110,-9)--(110,-4.5);
\draw (116,-9)--(116,-4.5);
\draw (2,-4.5)--(8,-4.5);
\draw (14,-4.5)--(20,-4.5);
\draw (26,-4.5)--(32,-4.5);
\draw (38,-4.5)--(44,-4.5);
\draw (50,-4.5)--(56,-4.5);
\draw (62,-4.5)--(68,-4.5);
\draw[thick] (74,-4.5)--(80,-4.5);
\draw (86,-4.5)--(92,-4.5);
\draw (98,-4.5)--(104,-4.5);
\draw (110,-4.5)--(116,-4.5);

\node at (77,-15) {$\mp$};
\end{tikzpicture}       
    \end{subfigure}
    \vskip5ex

    \begin{subfigure}[c]{0.3\textwidth}
      \centering
      \makeatletter{}
\begin{tikzpicture}[scale=0.04]
\draw (2,0)--(2,10.5);
\draw (8,0)--(8,7.5);

\draw[style=aline] (14,0)--(14,7.5);
\draw[style=aline] (20,0)--(20,4.5);
\draw[style=aline] (26,0)--(26,4.5);
\draw[style=aline] (32,0)--(32,4.5);
\draw[style=aline] (38,0)--(38,4.5);
\draw[style=aline] (44,0)--(44,4.5);
\draw[style=aline] (50,0)--(50,4.5);
\draw[style=aline] (56,0)--(56,7.5);
\draw[style=aline] (14,7.5)--(56,7.5);
\draw[style=aline] (20,4.5)--(26,4.5);
\draw[style=aline] (32,4.5)--(50,4.5);

\draw (62,0)--(62,10.5);
\draw (68,0)--(68,10.5);
\draw (74,0)--(74,7.5);
\draw (80,0)--(80,4.5);
\draw (86,0)--(86,4.5);
\draw (92,0)--(92,7.5);
\draw (98,0)--(98,7.5);
\draw (104,0)--(104,4.5);
\draw (110,0)--(110,4.5);
\draw (116,0)--(116,10.5);

\draw (8,7.5)--(14,7.5);
\draw (2,10.5)--(62,10.5);

\draw (80,4.5)--(86,4.5);
\draw (74,7.5)--(98,7.5);
\draw (104,4.5)--(110,4.5);
\draw (68,10.5)--(116,10.5);

\draw[thick] (2,-9)--(2,-4.5);
\draw[thick] (8,-9)--(8,-4.5);
\draw (14,-9)--(14,-4.5);
\draw (20,-9)--(20,-4.5);
\draw (26,-9)--(26,-4.5);
\draw (32,-9)--(32,-4.5);
\draw (38,-9)--(38,-4.5);
\draw (44,-9)--(44,-4.5);
\draw (50,-9)--(50,-4.5);
\draw (56,-9)--(56,-4.5);
\draw (62,-9)--(62,-4.5);
\draw (68,-9)--(68,-4.5);
\draw (74,-9)--(74,-4.5);
\draw (80,-9)--(80,-4.5);
\draw (86,-9)--(86,-4.5);
\draw (92,-9)--(92,-4.5);
\draw (98,-9)--(98,-4.5);
\draw (104,-9)--(104,-4.5);
\draw (110,-9)--(110,-4.5);
\draw (116,-9)--(116,-4.5);
\draw[thick] (2,-4.5)--(8,-4.5);
\draw (14,-4.5)--(20,-4.5);
\draw (26,-4.5)--(32,-4.5);
\draw (38,-4.5)--(44,-4.5);
\draw (50,-4.5)--(56,-4.5);
\draw (62,-4.5)--(68,-4.5);
\draw (74,-4.5)--(80,-4.5);
\draw (86,-4.5)--(92,-4.5);
\draw (98,-4.5)--(104,-4.5);
\draw (110,-4.5)--(116,-4.5);

\node at (5,-15) {$\pm$};
\end{tikzpicture}
 
    \end{subfigure}
    $\longleftrightarrow$
    \begin{subfigure}[c]{0.3\textwidth}
      \centering
      \makeatletter{}
\begin{tikzpicture}[scale=0.04]
\draw[style=aline] (2,0)--(2,7.5);
\draw[style=aline] (8,0)--(8,4.5);
\draw[style=aline] (14,0)--(14,4.5);
\draw[style=aline] (20,0)--(20,4.5);
\draw[style=aline] (26,0)--(26,4.5);
\draw[style=aline] (32,0)--(32,4.5);
\draw[style=aline] (38,0)--(38,4.5);
\draw[style=aline] (44,0)--(44,7.5);
\draw[style=aline] (2,7.5)--(44,7.5);
\draw[style=aline] (8,4.5)--(14,4.5);
\draw[style=aline] (20,4.5)--(38,4.5);

\draw (50,0)--(50,7.5);
\draw (56,0)--(56,4.5);
\draw (62,0)--(62,4.5);
\draw (68,0)--(68,10.5);
\draw (74,0)--(74,7.5);
\draw (80,0)--(80,4.5);
\draw (86,0)--(86,4.5);
\draw (92,0)--(92,7.5);
\draw (98,0)--(98,7.5);
\draw (104,0)--(104,4.5);
\draw (110,0)--(110,4.5);
\draw (116,0)--(116,10.5);

\draw (44,7.5)--(50,7.5);
\draw (56,4.5)--(62,4.5);
\draw (80,4.5)--(86,4.5);
\draw (74,7.5)--(98,7.5);
\draw (104,4.5)--(110,4.5);
\draw (68,10.5)--(116,10.5);

\draw (2,-9)--(2,-4.5);
\draw (8,-9)--(8,-4.5);
\draw (14,-9)--(14,-4.5);
\draw (20,-9)--(20,-4.5);
\draw (26,-9)--(26,-4.5);
\draw (32,-9)--(32,-4.5);
\draw (38,-9)--(38,-4.5);
\draw (44,-9)--(44,-4.5);
\draw[thick] (50,-9)--(50,-4.5);
\draw[thick] (56,-9)--(56,-4.5);
\draw (62,-9)--(62,-4.5);
\draw (68,-9)--(68,-4.5);
\draw (74,-9)--(74,-4.5);
\draw (80,-9)--(80,-4.5);
\draw (86,-9)--(86,-4.5);
\draw (92,-9)--(92,-4.5);
\draw (98,-9)--(98,-4.5);
\draw (104,-9)--(104,-4.5);
\draw (110,-9)--(110,-4.5);
\draw (116,-9)--(116,-4.5);
\draw (2,-4.5)--(8,-4.5);
\draw (14,-4.5)--(20,-4.5);
\draw (26,-4.5)--(32,-4.5);
\draw (38,-4.5)--(44,-4.5);
\draw[thick] (50,-4.5)--(56,-4.5);
\draw (62,-4.5)--(68,-4.5);
\draw (74,-4.5)--(80,-4.5);
\draw (86,-4.5)--(92,-4.5);
\draw (98,-4.5)--(104,-4.5);
\draw (110,-4.5)--(116,-4.5);

\node at (53,-15) {$\mp$};

\end{tikzpicture}
 
    \end{subfigure}
    \vskip1ex

    \caption{Examples of the involution of partitions of type III.} \label{fig:exinvolutiontypeIII}
  \end{figure}
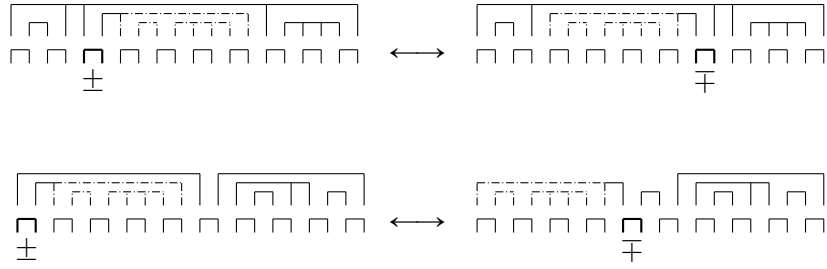

  \begin{prop}
    \begin{enumerate}[(i)]
     \item 
      The previously constructed permutations are inverse to each other, more
      precisely:
      \begin{align*}
        \sigma_{I,r,k} &=\sigma_{I,r,r-k-1}^{-1}\\
        \sigma_{IIa,r,k} &=\sigma_{IIb,r,r-k-1}^{-1}\\
        \sigma_{IIIa,k,l} &=\sigma_{IIIb,l,k}^{-1}
      \end{align*}
     \item  For a flip partition $\pi\in \CompOdd{2r}$ let us denote by
      $\sigma_\pi$ the permutation constructed above.
      Then the map
      \begin{align*}
        \psi : \CompOdd{2r}&\to\CompOdd{2r}\\
        \pi&\mapsto\sigma_\pi\cdot\pi
      \end{align*}
      is an involution.
    \end{enumerate}
  \end{prop}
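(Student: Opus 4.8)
The plan is to treat part~(i) as a bookkeeping computation with the explicit formulas and then to reduce part~(ii) to part~(i) by tracking how $\sigma_\pi$ transforms the combinatorial data attached to $\pi$: its type, its pivot brace, and its leftmost inner odd block. Throughout I write $k'=r-k-1$, so that $2k+2k'+2=2r$.

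For part~(i), in the type~I and type~II cases I would evaluate the compositions interval by interval. The permutation $\sigma_{I,r,k}$ translates $[1,2k]$ onto $[2k'+3,2r]$, translates $[2k+3,2r]$ onto $[1,2k']$, and swaps the two points $2k+1,2k+2$ of the pivot brace (placing them at $2k'+1,2k'+2$); composing with $\sigma_{I,r,k'}$ reverses each of these operations, so $\sigma_{I,r,k'}\circ\sigma_{I,r,k}=\mathrm{id}$, and the identical computation gives $\sigma_{IIb,r,k'}\circ\sigma_{IIa,r,k}=\mathrm{id}$. For type~III it is cleaner to use the cycle description: $\sigma_{IIIa,r,k,l}$ is the square of the cycle $c=(2l,2l-1,\dots,2k-1)$ on the padding interval of the leftmost inner odd block enlarged by the missing point of the pivot brace, while $\sigma_{IIIb}$, applied with the parameters belonging to the corresponding block of the image partition, is the square of $c^{-1}$; since $(c^{2})^{-1}=(c^{-1})^{2}$, the two permutations are inverse. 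Which parameters of $\sigma_{IIIb}$ are the right ones is pinned down together with part~(ii).

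For part~(ii) I would organise the argument by the three pairs of types, using that $\CompOdd{2r}$ is the disjoint union of the type~I partitions, the type~II partitions, and the type~III partitions. For each pair it suffices to show (a)~that $\psi$ maps the relevant set into itself and (b)~that $\psi\circ\psi=\mathrm{id}$ on it. For~(a) I would first note that $\sigma_\pi$ is a permutation of $[2r]$, so $\sigma_\pi\cdot\pi$ has the same multiset of block sizes as $\pi$ and in particular still has an odd block; moreover each $\sigma_\pi$ permutes the standard braces of $\onetwo{r}$ among themselves — it reverses only the pivot brace, and translates or flips whole intervals built out of complete braces — so the relation $\sigma_\pi\cdot\pi\vee\onetwo{r}=\hat{1}_{2r}$ is inherited and noncrossingness survives because every flipped interval is reversed in one piece. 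Inspecting the figures then shows that the type is sent to its partner: type~I to type~I, with the two odd blocks and their essentially nested families interchanged (this is where the decomposition $\pi=\nest(\BL)\cup\nest(\BR)\cup\{\BL\}\cup\{\BR\}$ is used); type~IIa to type~IIb and back; type~IIIa to type~IIIb and back. For~(b) I would read off the type data of $\psi(\pi)$ — in the type~I case the padding length $2k+1$ of the leftmost inner odd block becomes $2k'+1$, and in the type~II and type~III cases the parameters transform analogously — and invoke part~(i) to get $\sigma_{\psi(\pi)}=\sigma_\pi^{-1}$. Since the $\SG_{2r}$-action on partitions is a left action, this gives $\psi(\psi(\pi))=\sigma_{\psi(\pi)}\cdot(\sigma_\pi\cdot\pi)=(\sigma_{\psi(\pi)}\sigma_\pi)\cdot\pi=\pi$.

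The main obstacle I expect is step~(a)/(b) for type~III: confirming that the leftmost inner odd block of $\psi(\pi)$ is exactly the block prescribed by Figure~\ref{fig:involutiontypeIII}, i.e.\ that the rotation neither creates an inner odd block further to the left nor merges or splits blocks in an unexpected way. This is precisely the subtlety that forced flip partitions to be isolated: for a genuine type~III partition the pivot brace does not join two odd blocks at their endpoints, so the type~IIIa/IIIb dichotomy is stable under the map, whereas for the flip partitions the decomposition into $\nest(\BL)$, $\nest(\BR)$, $\{\BL\}$, $\{\BR\}$ (Lemmas~\ref{lemtwoinnerodd} and~\ref{lemma:symetryczne} and the corollary following them) is what guarantees that flipping the two halves reproduces a flip partition of the same family with the parameters transformed as needed. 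Once these structural facts are established, everything reduces to matching the diagrams against the permutation formulas, which is routine.
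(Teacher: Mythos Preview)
Your proposal is correct and follows essentially the same approach as the paper. The paper's own proof is extremely terse --- it declares part~(i) ``immediate'' and for part~(ii) observes only that each type is mapped to itself, that type~I is ``obviously involutive,'' and that for types~II and~III ``the role of the innermost odd block is left invariant''; your writeup is simply a careful expansion of precisely these assertions, including the verification (which the paper leaves entirely to the reader) that the leftmost inner odd block is preserved under the map, which you rightly flag as the one genuinely delicate point.
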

  \begin{proof}
    Part (i) is immediate.

    To see part (ii) we first observe that each type is mapped onto itself.
    In type I  the map is obviously involutive; as for type II and III,
    the role of the innermost odd block is left invariant and thus we have
    indeed an involution. 
  \end{proof}
 \begin{Rem}
  \begin{enumerate}
   \item  In type I and {II} we flip blocks  and the  pivot brace $\SpPair_\pi$
    is reversed, which will imply a change of sign and thus a cancellation in the formulas below;
    the odd blocks are flipped appropriately and the remaining
    blocks in this decomposition  are shifted
    but the tracial structure is unchanged.

In type {III}  we shift and flip the pivot brace $\SpPair_\pi$ only; the
remaining blocks stay in place.
During this procedure we also rotate  appropriately the two  points of the
blocks  $\BL$ and $\BR$ which are joined by $\SpPair_\pi$.
 Otherwise the structure of this  block is not changed.  
 
\item  If $\sigma:\pi \mapsto \pi'$, then $\pi$ and $\pi'$ have the same block structure i.e., for every $1 \leq m \leq n$,
the two partitions $\pi$ and $\pi'$ have the same number of blocks with $m$ elements. Indeed observe that our condition just say that we remove one point and add  one point or shift corresponding blocks. During this procedure we can not change  framework of block.

\item
 The following example shows
that  we cannot apply the rules of type~{III} universally without losing
the involutive property. Namely, applying the rule of type~{III} to the
following type~{II} partition twice we obtain:
\begin{equation}
  \label{eq:typeIIbad}
  \begin{array}{ccccc}
\makeatletter{}\begin{tikzpicture}[scale=0.07]
\draw (2,0)--(2,7.5);
\draw (8,0)--(8,7.5);
\draw (14,0)--(14,4.5);
\draw (20,0)--(20,4.5);
\draw (26,0)--(26,4.5);
\draw (32,0)--(32,7.5);
\draw (14,4.5)--(26,4.5);
\draw (2,7.5)--(32,7.5);

\draw (2,-9)--(2,-4.5);
\draw (8,-9)--(8,-4.5);
\draw (14,-9)--(14,-4.5);
\draw (20,-9)--(20,-4.5);
\draw[thline] (26,-9)--(26,-4.5);
\draw[thline] (32,-9)--(32,-4.5);
\draw (2,-4.5)--(8,-4.5);
\draw (14,-4.5)--(20,-4.5);
\draw[thline] (26,-4.5)--(32,-4.5);
\end{tikzpicture}  &    \to & 
\makeatletter{}\begin{tikzpicture}[scale=0.07]
\draw (2,0)--(2,4.5);
\draw (8,0)--(8,4.5);
\draw (14,0)--(14,4.5);
\draw (20,0)--(20,4.5);
\draw (26,0)--(26,4.5);
\draw (32,0)--(32,4.5);
\draw (2,4.5)--(14,4.5);
\draw (20,4.5)--(32,4.5);

\draw (2,-9)--(2,-4.5);
\draw (8,-9)--(8,-4.5);
\draw[thline] (14,-9)--(14,-4.5);
\draw[thline] (20,-9)--(20,-4.5);
\draw (26,-9)--(26,-4.5);
\draw (32,-9)--(32,-4.5);
\draw (2,-4.5)--(8,-4.5);
\draw[thline] (14,-4.5)--(20,-4.5);
\draw (26,-4.5)--(32,-4.5);
\end{tikzpicture}
 
 & \to &
\makeatletter{}\begin{tikzpicture}[scale=0.07]
\draw (2,0)--(2,7.5);
\draw (8,0)--(8,4.5);
\draw (14,0)--(14,4.5);
\draw (20,0)--(20,4.5);
\draw (26,0)--(26,7.5);
\draw (32,0)--(32,7.5);
\draw (8,4.5)--(20,4.5);
\draw (2,7.5)--(32,7.5);

\draw[thline] (2,-9)--(2,-4.5);
\draw[thline] (8,-9)--(8,-4.5);
\draw (14,-9)--(14,-4.5);
\draw (20,-9)--(20,-4.5);
\draw (26,-9)--(26,-4.5);
\draw (32,-9)--(32,-4.5);
\draw[thline] (2,-4.5)--(8,-4.5);
\draw (14,-4.5)--(20,-4.5);
\draw (26,-4.5)--(32,-4.5);
\end{tikzpicture}
 
\\
    \{(1,2,6),(3,4,5)\} && \{(1,2,3),(4,5,6)\} && \{(2,3,4),(1,5,6)\}
  \end{array}
\end{equation}
  \end{enumerate}

\end{Rem}

\section{Preservation of infinite divisibility and cancellation of odd cumulants}

\subsection{The main result}
In our previous paper we observed that the phenomenon
of \emph{cancellation of odd cumulants} to be defined below is related to 
preservation of free infinite divisibility.

\begin{theo}[{\cite[{Corollary~4.14}]{EjsmontLehner:2017}}]
  \label{twr:niskonczonapodzielnoscEL}
Let $\X_1, \X_2,\dots, \X_n\in \A_{sa}$ be a free family  
  of  $\boxplus$-infinitely divisible random variables. 
  Let $P$ be a selfadjoint  polynomial of degree $2$
  in noncommuting variables which exhibits cancellation of odd cumulants.
  Then the distribution of\/  $Y$ is $\boxplus$-infinitely divisible as well.
\end{theo}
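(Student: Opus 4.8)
The plan is to establish free infinite divisibility of $Y$ via its combinatorial characterization: it suffices to show that the free cumulant sequence $(K_r(Y))_{r\geq 1}$ is conditionally positive definite, equivalently that every Hankel matrix $[K_{p+q}(Y)]_{p,q=1}^{N}$ is positive semidefinite. I would proceed in three moves --- symmetrize the inputs using the cancellation hypothesis, compute $K_r(Y)$ by the quadratic-form formula of Proposition~\ref{prop:CykliczneVariancja}, and then verify the positivity.

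\emph{Reduction to even variables.} If $X_i$ has FID law $\mu_i$, set $\nu_i:=(\mu_i\boxplus\check\mu_i)^{\boxplus 1/2}$, where $\check\mu_i$ is the image of $\mu_i$ under $x\mapsto -x$. Then $\nu_i$ is again FID, and since $K_m(\nu_i)=\tfrac12\bigl(K_m(\mu_i)+(-1)^mK_m(\mu_i)\bigr)$, all odd free cumulants of $\nu_i$ vanish while its even free cumulants coincide with those of $\mu_i$; thus $\nu_i$ is symmetric with the same determining sequence as $\mu_i$. Realizing a free family $\hat X_1,\dots,\hat X_n$ with laws $\nu_i$ and invoking cancellation of odd cumulants, the distribution of $Y$ is unchanged if every $X_i$ is replaced by $\hat X_i$. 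Subtracting the constant term of $P$ (a deterministic shift, which preserves FID) and noting that the cancellation hypothesis leaves no room for a linear part, we may therefore assume $Y=Q_n=\sum_{i,j}a_{i,j}\hat X_i\hat X_j$ is a quadratic form in a free family of even elements with $A=[a_{i,j}]$ Hermitian. Proposition~\ref{prop:CykliczneVariancja}(i) then gives
\[
  K_r(Y)=\sum_{i_1,\dots,i_r\in[n]}\Tr(AE_{i_1}AE_{i_2}\cdots AE_{i_r})\sum_{\substack{\pi\in\NCeven(2r)\\ \pi\vee\onetwo{r}=\hat{1}_{2r}}}K_\pi(\hat X_{i_r},\hat X_{i_1},\hat X_{i_1},\dots,\hat X_{i_{r-1}},\hat X_{i_r}),
\]
and, parametrising the inner sum by $[\pispecial{r},\hat{1}_{2r}]\cong\NC(r)$ via Lemma~\ref{lemm:lematoparzystych}, the right-hand side takes the shape of a (multivariate) boxed convolution $\Cf_r(\mathcal M_A\boxstar\mathcal C)$, where $\mathcal M_A$ is the $\Moeb$-type series attached to $A$ through the diagonal maps $\ED$ and $\mathcal C$ encodes the even cumulants of the $\hat X_i$ (in the identically distributed case this is precisely \eqref{eq:kumulantsamplevariance}).

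\emph{Positivity --- the main obstacle.} It remains to deduce conditional positive definiteness of $(K_r(Y))_r$ from this formula, and this is where the work lies. In the free semicircular model only $\pi=\pispecial{r}$ contributes, so $K_r(Y)=\Tr(A^r)$, and positivity is transparent:
\[
  \sum_{p,q=1}^{N}\xi_p\overline{\xi_q}\,\Tr(A^{p+q})=\Tr(BB^{*})\geq 0,\qquad B:=\sum_{p=1}^{N}\xi_pA^p,
\]
using that $A$ is Hermitian. For a general even FID family I would argue by continuity and multilinearity of the formula: approximate each $\nu_i$ weakly by finite free convolutions of symmetric compound free Poisson laws (for which $K_{2j}(\hat X_i)=\lambda_i\,m_{2j}(\rho_i)$), reducing to the case of compound-free-Poisson inputs; for those, writing $\hat X_i=\left[\begin{smallmatrix}0&T_i\\ T_i^{*}&0\end{smallmatrix}\right]$ makes $Q_n$ block-diagonal with blocks $\sum_{i,j}a_{i,j}T_iT_j^{*}$ and $\sum_{i,j}a_{i,j}T_i^{*}T_j$, each of which can be exhibited as (a limit of) free compound Poisson elements in a suitably enlarged $C^{*}$-algebra and hence is FID; $K_r(Y)$ is then a pointwise limit of conditionally positive definite sequences, so conditionally positive definite. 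The two delicate points are (a) that the cancellation hypothesis genuinely yields the clean even-cumulant formula above, so that no residual odd-cumulant Hankel obstruction survives, and (b) making the compound-Poisson reduction and the operator model precise --- equivalently, proving directly that the boxed-convolution structure $\Cf_r(\mathcal M_A\boxstar\mathcal C)$ preserves conditional positive definiteness; this is where the bulk of the argument of \cite{EjsmontLehner:2017} is spent.
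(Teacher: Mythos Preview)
The present paper does not prove this theorem: it is quoted verbatim from \cite[Corollary~4.14]{EjsmontLehner:2017} and invoked without argument as the implication \eqref{main:it:cancel}$\implies$\eqref{main:it:preserve} inside the proof of Theorem~\ref{prop:znikaniekumulant}. There is therefore no proof here to compare your attempt against.

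That said, your outline is structurally sound up to the point you yourself flag. The symmetrization step --- replacing each $\mu_i$ by $(\mu_i\boxplus\check\mu_i)^{\boxplus 1/2}$ and appealing to the cancellation hypothesis to keep the law of $Y$ unchanged --- is correct, as is the observation that cancellation forces the linear part of $P$ to vanish (already $K_1(P)$ would otherwise depend on the $K_1(X_i)$), so that Proposition~\ref{prop:CykliczneVariancja} applies and yields the even-partition formula for $K_r(Y)$. Where your sketch stops being a proof is exactly where you say it does: the compound-Poisson approximation together with the $2\times 2$ block model is a heuristic, and the assertion that $\sum_{i,j}a_{i,j}T_iT_j^{*}$ ``can be exhibited as (a limit of) free compound Poisson elements'' is precisely the nontrivial content that needs an argument --- this is again a quadratic form, not a single element whose infinite divisibility is immediate. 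Your final sentence defers this to \cite{EjsmontLehner:2017}, which is the very source of the statement you are trying to establish; so what you have written is a correct reduction followed by a pointer to the literature, not an independent proof.
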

In \cite[{Conjecture 5.2}]{EjsmontLehner:2017}
we also conjectured that the cancellation phenomenon is actually equivalent to
preservation of infinite divisibility. 
In this section we confirm this conjecture for quadratic forms and it turns out
that the equivalence can be extended to several other properties.
\begin{defi}
  Let $Q_n = \sum_{i,j=1}^n a_{i,j}X_iX_j$ be a quadratic form in noncommuting
  variables $X_1$,$X_2$,\ldots{},$X_n$  with system matrix
  $A=[a_{i,j}]_{i,j=1}^n\in M_n(\IC)$,
  which is assumed to be selfadjoint.
  When the formal variables $X_i$ are replaced by (noncommutative) selfadjoint random
  variables,   then  $Q_n$ becomes a selfadjoint random variable as well.
  With the aid of the product formula~\eqref{twr:produktargumentow}
  a universal formula for its free cumulants can be computed, involving the
  coefficients $a_{i,j}$ and the joint cumulants of the random variables $X_i$.
    \begin{enumerate}[(i)]
     \item \label{defit:cancellation} We say that $Q_n$ exhibits \emph{free cancellation of odd cumulants}
      if for any free family of selfadjoint noncommutative random variables
      $X_1,X_2,\dots,X_n$ 
      the odd cumulants of the $X_i$ do not contribute to the universal
      formula for the cumulants of $Q_n$.
     \item \label{defit:strongcancellation} We say that $Q_n$ exhibits \emph{strong cancellation of odd cumulants}
      if for any family of selfadjoint noncommutative random variables
      $X_1,X_2,\dots,X_n$ in a tracial noncommutative probability space
      the odd joint cumulants of the $X_i$ do not contribute to the universal
      formula for the cumulants of $Q_n$.
     \item We say that $Q_n$ \emph{preserves infinite divisibility} if for any
      free family of selfadjoint noncommutative random variables $X_1,X_2,\dots,X_n$ 
      with freely infinitely divisible laws, the law of $Q_n$ is also freely infinitely divisible.
  \end{enumerate}

\end{defi}

\begin{Rem}
  \begin{enumerate}[1.]
   \item   We would like to emphasize that  freeness is not assumed in
  condition~\eqref{defit:strongcancellation}, which  asserts that
  in addition to the univariate odd cumulants also the mixed odd cumulants cancel.
 \item 
  Free and strong cancellation of odd cumulants are not
  equivalent for higher order polynomials.
  For example, it is immediate that the iterated commutator $[[X_1,X_2],X_3]$ 
  exhibits
  free cancellation of odd cumulants, but if $X_1$ and $X_2$ 
  are identically distributed with free cumulants $r_1,r_2,r_3,\dots$ 
  and $X_3$ is replaced by $X_1$, 
  a calculation shows that the third cumulant
  $$
  K_3( [[X_1,X_2],X_1]) = -6 r_2 r_3 r_4+6 r_3^3 -6 r_2^3 r_3 
  $$
  depends on the third cumulant $r_3$.
 \item 
  Concerning item~\eqref{defit:cancellation} we remark that
  any free family of selfadjoint
  variables can be realized in a tracial probability space
  \cite[Proposition~2.5.3.]{VoiculescuDykemaNica:1992} and therefore
  the traciality condition is implicitly satisfied.
  \end{enumerate}
\end{Rem}
We call a selfadjoint  matrix   $A\in M_n(\IC)$  \emph{skew symmetric} if  $A=-A^T$; that is, $A=i\tilde{A}$, where $\tilde{A}$ is a real skew symmetric
  matrix (this is in order to distinguish between skew-Hermitian matrix).

\begin{theo} \label{prop:znikaniekumulant}
  The following properties are equivalent for a quadratic form
  $T_n=\sum_{i,j=1}^na_{i,j}\X_i\X_j$
  with selfadjoint system matrix   $A=[a_{i,j}]_{i,j=1}^n\in M_n(\IC)$.
\begin{enumerate}[(i)]
 \item \label{main:it:strongcancel}   $T_n$ exhibits  strong cancellation of odd cumulants.
 \item \label{main:it:cancel}   $T_n$ exhibits  cancellation of odd cumulants.
 \item \label{main:it:preserve}   $T_n$ preserves free infinite divisibility.
 \item \label{main:it:antisym}     $A$ is skew symmetric or equivalently, $T_n=\sum_{k<l} a_{k,l} (X_kX_l-X_lX_k)$ is a sum of commutators.
 \item \label{main:it:sym}   The distribution of 
  $T_n$ is symmetric for any free family of selfadjoint random variables $\X_1, \X_2,\dots, \X_n$.
\end{enumerate}
\end{theo}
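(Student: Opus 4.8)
The plan is to prove the cycle (iv)$\Rightarrow$(i)$\Rightarrow$(ii)$\Rightarrow$(iii) together with (ii)$\Rightarrow$(iv), the equivalence (iv)$\Leftrightarrow$(v), and finally (iii)$\Rightarrow$(iv) to close the loop.

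The combinatorial heart is (iv)$\Rightarrow$(i). Assume $A$ is skew symmetric, so that $a_{i,i}=0$ and $a_{i,j}=-a_{j,i}$. Expanding $K_r(T_n)$ multilinearly and applying the product formula~\eqref{twr:produktargumentow} writes it as a sum over $\pi\in\Comp{2r}$. The part indexed by the even complements $\CompEven{2r}$ involves, by Lemma~\ref{lemm:lematoparzystych}, only even joint cumulants of the $X_i$, so it is enough to show that the contributions of $\CompOdd{2r}$ cancel. For this I apply the fixed-point-free involution $\psi$ of Section~\ref{ssec:involution}. The point to check, going through the types~I, IIa, IIb, IIIa, IIIb separately, is that the relabeling induced by $\sigma_\pi$ permutes the positions inside every block of $\pi$ cyclically --- so that by traciality, Lemma~\ref{lem:traciality}, the corresponding joint-cumulant factor is unchanged --- while it reverses exactly the pivot brace, thereby turning a coefficient $a_{p,q}$ into $a_{q,p}=-a_{p,q}$. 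Hence the $\pi$-term equals the negative of the $\psi(\pi)$-term and the odd part sums to zero, leaving a universal formula for $K_r(T_n)$ in even joint cumulants only; this is strong cancellation of odd cumulants.

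The implication (i)$\Rightarrow$(ii) is immediate, since a free family of self-adjoint variables can be realized in a tracial probability space and for such a family all mixed cumulants vanish. The implication (ii)$\Rightarrow$(iii) is Theorem~\ref{twr:niskonczonapodzielnoscEL}. For (ii)$\Rightarrow$(iv) I inspect the first cumulant: for a free family one has $K_1(T_n)=\tau(T_n)=\sum_i a_{i,i}K_2(X_i)+\sum_{i,j}a_{i,j}K_1(X_i)K_1(X_j)$, whose second summand is exactly the part of the universal formula depending on odd cumulants; realizing arbitrary real means $K_1(X_i)=\xi_i$ (for instance by shifting semicircular variables) and invoking cancellation forces $\sum_{i,j}a_{i,j}\xi_i\xi_j=0$ for all $\xi\in\R^n$, hence $\Re A=0$, i.e.\ $A$ is skew symmetric. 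This yields (i)$\Leftrightarrow$(ii)$\Leftrightarrow$(iv). For (iv)$\Rightarrow$(v) I use that for a free family every mixed moment is invariant under reversing the word, because the moment--cumulant expansion runs over $\NC$, which is reversal symmetric, and for a free family each block cumulant does not depend on the internal order of its entries. Reversing the word of a monomial in $T_n^m$ produces the same monomial with each of its $m$ braces reversed, hence with coefficient multiplied by $(-1)^m$ when $A$ is skew; combined with reversal invariance of $\tau$ this gives $\tau(T_n^m)=(-1)^m\tau(T_n^m)$, so all odd moments of $T_n$ vanish and its distribution is symmetric. Conversely (v)$\Rightarrow$(iv) is immediate: inserting deterministic variables $X_i=c_i\One$ with $c_i\in\R$ --- a free family whose laws are trivially freely infinitely divisible --- makes $T_n=\sum_{i,j}a_{i,j}c_ic_j$ a point mass, which is symmetric only if it is $0$, and letting $c$ range over $\R^n$ forces $\Re A=0$.

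It remains to prove (iii)$\Rightarrow$(iv), which I expect to be the main obstacle. Arguing by contraposition, suppose $A$ is not skew symmetric, so $\Re A\neq0$; I must produce a free family of freely infinitely divisible variables for which $T_n$ is not freely infinitely divisible. The plan is to take $X_i=t_i\One+Z_i$ with the $Z_i$ free copies of a suitably chosen freely infinitely divisible law and the shifts $t_i$ chosen so that $(\Re A)\,t\neq0$, to expand $K_2(T_n)$, $K_3(T_n)$ and $K_4(T_n)$ from the universal formula as functions of the $t_i$ and the free cumulants of $Z$, and to check that for an appropriate jump distribution of $Z$ the Hankel determinant $K_2(T_n)K_4(T_n)-K_3(T_n)^2$ becomes strictly negative, contradicting conditional positive definiteness~\eqref{eq:condposdef} of the free cumulants of $T_n$. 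The delicate point is that to first order in the shift $T_n$ only resembles a freely infinitely divisible linear combination of the $Z_i$, so one must track the genuinely non-skew quadratic part $\sum_{i,j}a_{i,j}Z_iZ_j$ to higher order and tune the jump law of $Z$ so that the violation of conditional positivity is not washed out.
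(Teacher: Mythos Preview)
Your argument for (iv)$\Rightarrow$(i) via the involution of Section~\ref{sec:involution} is exactly the paper's, and (i)$\Rightarrow$(ii)$\Rightarrow$(iii) and (v)$\Rightarrow$(iv) are the same trivial steps. Two of your implications are genuinely different from the paper and in fact more elementary. Your (ii)$\Rightarrow$(iv) via the first cumulant $K_1(T_n)=\sum_i a_{i,i}K_2(X_i)+\sum_{i,j}a_{i,j}K_1(X_i)K_1(X_j)$ is a clean shortcut that the paper does not take; the paper instead goes through (iii). Your (iv)$\Rightarrow$(v) via reversal invariance of mixed moments of a free self-adjoint family is also simpler than the paper's argument, which works at the level of $K_r(T_n)$, restricts to $\pi\in\NCeven(2r)$ with $\pi\geq\pispecial{r}$ via Lemma~\ref{lemm:lematoparzystych}, and then applies a mirror permutation together with Lemma~\ref{lem:permutedcumulants}. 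Both of your alternatives are correct and buy brevity; the paper's routes are more structural but not needed for the bare equivalence.

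The genuine gap is (iii)$\Rightarrow$(iv): you only sketch a strategy and concede it is ``the main obstacle''. The paper does not attempt a general perturbative argument of the kind you outline; instead it proceeds by two very concrete specializations. First, setting $X_i$ semicircular with mean and variance $1$ and $X_j=0$ for $j\neq i$ gives $T_n=a_{i,i}X_i^2$, and one invokes the known fact that the square of a non-centered semicircular is not freely infinitely divisible to force $a_{i,i}=0$. Second, for the off-diagonal entries one takes $X_1,X_2$ free semicircular with mean and variance $1$, writes $a_{1,2}=\alpha+i\beta$, computes $K_1,\dots,K_8$ of $T_2=a_{1,2}X_1X_2+a_{2,1}X_2X_1$ explicitly (computer algebra), and checks that the Hankel determinants $h_3$ and $h_4$ cannot both be nonnegative unless $\alpha=0$. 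Your proposed plan of tuning a jump distribution and tracking $K_2K_4-K_3^2$ may well be made to work, but as written it is not a proof; if you want to avoid machine computation you would still need to exhibit at least one explicit family for which a Hankel minor is provably negative whenever $\Re A\neq 0$.
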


The crucial steps are the implications
\eqref{main:it:cancel}$\implies{}$\eqref{main:it:preserve}
, \eqref{main:it:antisym}$\implies{}$\eqref{main:it:strongcancel} and \eqref{main:it:cancel}+\eqref{main:it:antisym}$\implies{}$\eqref{main:it:sym}.
The former is the content of Theorem~\ref{twr:niskonczonapodzielnoscEL} and
for the latter we will apply the involution of the previous section in
combination with the following lemma.
\begin{lemm}
  \label{lem:involution}
  \begin{enumerate}[(i)]
   \item 
    Let $\pi$ be a partition of type I, II or III and $\sigma$ the corresponding permutation
    constructed in section~\ref{ssec:involution}. Then for elements
    $X_1,X_2,\dots,X_n$ of a tracial probability space the cumulant is invariant:
    $$
    K_\pi(X_{i_1},X_{i_2},\dots,X_{i_{2r}}) =
        K_{\sigma\cdot\pi}(X_{i_{\sigma(1)}},X_{i_{\sigma(2)}},\dots,X_{i_{\sigma(2r)}})
    $$
   \item
    Let $A$ be a skew-symmetric matrix and $\sigma$ a permutation of type I, II
    or III as above.
    Then
    $$
    a_{i_{\sigma(1)},i_{\sigma(2)}}    a_{i_{\sigma(3)},i_{\sigma(4)}} \dotsm
    a_{i_{\sigma(2r-1)},i_{\sigma(2r)}}
    = -    a_{i_1,i_2}    a_{i_3,i_4} \dotsm    a_{i_{2r-1},i_{2r}}
    $$
  \end{enumerate}
\end{lemm}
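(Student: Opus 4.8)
The plan is to derive both parts of Lemma~\ref{lem:involution} from two purely combinatorial features of the five permutations $\sigma_{I,r,k}$, $\sigma_{IIa,r,k}$, $\sigma_{IIb,r,k}$, $\sigma_{IIIa,r,k,l}$ and $\sigma_{IIIb,r,k,l}$ introduced in Section~\ref{ssec:involution}, and then to verify those features type by type by inspecting how each permutation acts on the blocks of $\pi$ and on the standard braces, following the pictures in Figures~\ref{fig:involutiontypeI}--\ref{fig:involutiontypeIII}. The decomposition of a flip partition as $\pi=\{\BL\}\cup\{\BR\}\cup\nest(\BL)\cup\nest(\BR)$ from the corollary to Lemma~\ref{lemma:symetryczne} is what keeps the bookkeeping short: for types I and II only the two odd pivot blocks $\BL,\BR$ are genuinely rearranged while everything else is translated, and in type~III only the pivot brace together with the two endpoints of $\BL$ and $\BR$ that it joins are moved.

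For part (i), recall that $K_\pi(X_{i_1},\dots,X_{i_{2r}})=\prod_{B\in\pi}K_{\abs B}(X_{i_b}\colon b\in B)$, the entries of each block factor being listed in increasing order of position, and that $\sigma\cdot\pi=\{\sigma(B)\colon B\in\pi\}$. Hence it suffices to check that $\sigma$ preserves the cyclic order of every block $B$: if $B=\{b_1<\dots<b_m\}$, then after sorting, the elements of $\sigma(B)$ occur as a cyclic rotation of the list $(\sigma(b_1),\dots,\sigma(b_m))$. Granting this, each block cumulant is unchanged by the cyclic invariance of tracial cumulants (Lemma~\ref{lem:traciality}(ii)), and taking the product over all blocks yields the asserted identity (with the arguments relabelled accordingly). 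The cyclic-order property is then verified case by case: a block disjoint from the padding interval of the leftmost inner odd block (augmented, in type~III, by the remaining endpoint of the pivot brace) is simply translated by $\sigma$, so its cumulant is literally unchanged; the blocks that do move — the two odd pivot blocks in types I and II, and additionally the leftmost inner odd block in type~III — have their interiors translated while the two ends meeting the pivot brace are carried to the opposite ends, so each is sent to a cyclic rotation of itself, as one reads off directly from the explicit formulas. In particular no block is ever reflected, so only Lemma~\ref{lem:traciality}(ii) is needed.

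For part (ii), note that $a_{i_1,i_2}a_{i_3,i_4}\dotsm a_{i_{2r-1},i_{2r}}$ is a product indexed by the standard braces $(2m-1,2m)\in\onetwo{r}$. It therefore suffices to check that $\sigma$ permutes the set of standard braces, mapping each brace to a brace, and that this permutation is orientation-preserving on every brace except the pivot brace $\SpPair_\pi$, on which it is orientation-reversing — that is, $\sigma$ carries the smaller endpoint of $\SpPair_\pi$ to the larger endpoint of its image brace and vice versa (for type~III, for instance, $\sigma$ cyclically shifts the braces contained in the augmented padding interval and reverses exactly the one containing the pivot element). Since $A$ is skew-symmetric we have $a_{y,x}=-a_{x,y}$, so in $\prod_m a_{i_{\sigma(2m-1)},i_{\sigma(2m)}}$ all factors coming from orientation-preserved braces are just the original factors in a new order, whereas the single factor coming from the reversed pivot brace contributes one sign; hence the product is negated.

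The main obstacle is the case analysis itself: for each of the five permutation types one must pin down exactly which blocks are translated, which are cyclically rotated and in which direction, and — crucially for part (ii) — confirm that precisely one brace, the pivot brace, has its orientation reversed, neither more nor fewer. Types I and II are kept under control by the flip-partition classification and the decomposition corollary, which reduce everything to translations of the essentially nested blocks together with a swap of the two odd blocks. Type~III is the subtler case: there the permutation acts by a rotation of the entire augmented padding interval of the leftmost inner odd block rather than on a single brace, so one has to check simultaneously that the inner odd block, the pivot block $\BL$ it is joined to, the blocks nested inside the leftmost inner odd block, and each brace contained in that interval are all moved compatibly, with the pivot brace the unique one whose orientation flips; and one must check that the potentially problematic degenerate configurations (e.g.\ the leftmost inner odd block being a singleton, which would give $\sigma=\mathrm{id}$) are in fact excluded by the connectivity requirement defining $\Comp{2r}$.
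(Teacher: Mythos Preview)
Your approach is essentially the same as the paper's: the paper's proof consists of two sentences stating that one verifies each type by inspecting the diagrams, observing that the permutations act tracially (i.e., cyclically) on the blocks of $\pi$ and that the standard braces are permuted with their orientation preserved except for the pivot brace, which is reversed. Your proposal spells out exactly this argument in more detail, reducing part~(i) to Lemma~\ref{lem:traciality}(ii) via the cyclic-order preservation on blocks and part~(ii) to the single sign coming from the reversed pivot brace, so there is no substantive difference in strategy.

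One caution: your final remark that the degenerate configurations (e.g.\ a singleton as leftmost inner odd block in type~III) ``are in fact excluded by the connectivity requirement defining $\Comp{2r}$'' is not correct as stated---for instance $\pi=\{\{1,4\},\{2\},\{3\}\}\in\CompOdd{4}$ is type~III with leftmost inner odd block $\{2\}$, and the formula $\sigma_{IIIa}=(2,1)^2$ gives the identity, which neither reverses the pivot brace nor changes the type. This is a subtlety in the paper's explicit formula rather than in your overall strategy; the intended permutation (as read from Figure~\ref{fig:involutiontypeIII}) does reverse the pivot brace, and your two combinatorial reductions remain the right framework once the correct permutation is used.
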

\begin{proof}
  Both claims are easily verified for each type separately by inspecting the diagrams in
  Figures~\ref{fig:involutiontypeI}, \ref{fig:involutiontypeII} and
  \ref{fig:involutiontypeIII}.
  On the one hand, the permutations act tracially on the blocks of $\pi$ and on
  the other hand, braces are preserved and keep their order with the unique
  exception of the pivot brace which is reversed.
\end{proof}

\begin{proof}[Proof of Theorem~\ref{prop:znikaniekumulant}]
  We will first prove the equivalence of (i)--(iv); then we show that
  \eqref{main:it:cancel} together with \eqref{main:it:antisym} implies
  \eqref{main:it:sym} and finally that \eqref{main:it:sym} implies  \eqref{main:it:antisym}.

  \eqref{main:it:strongcancel}$\implies{}$\eqref{main:it:cancel} is obvious.

  \eqref{main:it:cancel}$\implies{}$\eqref{main:it:preserve}
  follows directly from  
  Theorem~\ref{twr:niskonczonapodzielnoscEL}.

  \eqref{main:it:preserve}$\implies{}$\eqref{main:it:antisym}.
  Fix $i\in\{1,2,\dots,n\}$,
  let  $\X_i$ be  a semicircular element with mean and variance $1$ and put $ \X_j=0$ for
  $j\ne i$.
  Then $T_n=a_{i,i}\X_i^2$ but $\X_i^2$ is not $\boxplus$-infinitely divisible (see
  \cite{Eisenbaum:2012}), and we conclude that $a_{i,i}=0$.

  Thus we have shown that the diagonal entries vanish.
  To cope with the off-diagonal entries, it is sufficient to prove that
  $a_{1,2}=-a_{2,1}$, the proof for the remaining entries being analogous.
  
  Let now   $\X_1$ and $\X_2$ have semicircular distribution with mean and
  variance one and $a_{1,2}=\alpha+i\beta$. Then a computer
  calculation (using \FriCAS{} \cite{fricas}, code available on request) shows that the free
  cumulants of the quadratic form $T_2=a_{1,2} X_1X_2+a_{2,1}X_2X_1$ are
  \begin{align*}
    K_1(T_2) &=2 \alpha
    &
      K_2(T_2) &= 2 \beta^2+10\alpha^2
    \\
      K_3(T_2) &=  24\alpha^3
    &
    K_4(T_2) &=  2\beta^4+4\alpha^2\beta^2+66\alpha^4
    \\
      K_5(T_2) &=  160\alpha^5
    &
      K_6(T_2) & =  2  \beta^6+6  \alpha^2  \beta^4+6  \alpha^4  \beta^2+386 \alpha^6
    \\                 
      K_7(T_2) & =  896\alpha^7
    &
      K_8(T_2) & =  2  \beta^8+8  \alpha^2  \beta^6+12  \alpha^4  \beta^4+8
                 \alpha^6  \beta^2+2050  \alpha^8
    .
  \end{align*}
  To show that this law is not infinitely divisible, it suffices
  to disprove conditional positive definiteness of the cumulant sequence
  \eqref{eq:condposdef}.
  To this end we compute a few Hankel determinants
    \begin{equation}
    h_n =
    \det 
    \left[
      K_{i+j}(T_2)
    \right]_{1\leq i,j\leq n}
  \end{equation}
  and obtain
  \begin{align*}
    h_2&=    4  ( \beta^6+7  \alpha^2  \beta^4+43  \alpha^4  \beta^2+21    \alpha^6 )\\
    h_3&=    32  \alpha^2  ( \beta^2+\alpha^2 ) ( \beta^8 -12  \alpha^2  \beta^6+2  \alpha^4  \beta^4 -52  \alpha^6  \beta^2 -131  \alpha^8 )\\
    h_4&=-256  \alpha^6  ( \beta^2 -3  \alpha^2 )^4  ( \beta^2+\alpha^2)^3 
  \end{align*}
  Thus the fourth determinant is negative unless $\beta=\pm\alpha\sqrt{3}$; in
  that case however $h_3=  - 65536 \alpha^{12}$ and we conclude that $\Re a_{1,2}=\Re
  a_{2,1}=\alpha=0$ and consequently  $\Im a_{1,2}=\Im  a_{2,1}=\beta=0$ as well.

  \eqref{main:it:antisym}$\implies{}$\eqref{main:it:strongcancel}.
  Suppose that $A$ is skew-symmetric. 
  We apply again the product formula from Theorem~\ref{thm:krawczyk} and obtain
  \begin{align*}
   K_r(T_n)
 &= \sum_{i_1,i_2,\dots,i_{2r}\in[n]}
      K_r(a_{i_{1},i_{2}}\X_{i_{1}}\X_{i_{2}},
         a_{i_{3},i_{4}}\X_{i_{3}}\X_{i_{4}},
         \dots,
         a_{i_{2r-1},i_{2r}}\X_{i_{2r-1}}\X_{i_{2r}})
  \\
           &= \sum_{i_1,i_2,\dots,i_{2r}\in[n]}
               \sum_{\substack{\pi\in \NC(2r)\\  
  \pi\vee\makeatletter{}\begin{tikzpicture}[inner sep=0pt,scale=0.04]
\draw (2,0)--(2,4.5);
\draw (8,0)--(8,4.5);
\draw (14,0)--(14,4.5);
\draw (20,0)--(20,4.5);
\node at (30,2){$\cdots$};
\draw (38,0)--(38,4.5);
\draw (44,0)--(44,4.5);
\draw (2,4.5)--(8,4.5);
\draw (14,4.5)--(20,4.5);
\draw (38,4.5)--(44,4.5);
\end{tikzpicture}
 =\hat{1}_{2r}\\
    }
    }
                 a_{i_{1},i_{2}}      a_{i_{3},i_{4}} \dotsm       a_{i_{2r-1},i_{2r}}K_\pi(X_{i_{1}},X_{i_{2}},\dots,X_{i_{2r}})\\
           &= \sum_{
  \substack{
\pi\in \NC(2r)\\  
  \pi\vee\makeatletter{}\begin{tikzpicture}[inner sep=0pt,scale=0.04]
\draw (2,0)--(2,4.5);
\draw (8,0)--(8,4.5);
\draw (14,0)--(14,4.5);
\draw (20,0)--(20,4.5);
\node at (30,2){$\cdots$};
\draw (38,0)--(38,4.5);
\draw (44,0)--(44,4.5);
\draw (2,4.5)--(8,4.5);
\draw (14,4.5)--(20,4.5);
\draw (38,4.5)--(44,4.5);
\end{tikzpicture}
 =\hat{1}_{2r}\\
       }
  }
  \sum_{i_1,i_2,\dots,i_{2r}\in [n]}
  a_{i_{1},i_{2}}      a_{i_{3},i_{4}} \dotsm       a_{i_{2r-1},i_{2r}}K_\pi(X_{i_{1}},X_{i_{2}},\dots,X_{i_{2r}})
  \\
           &= \sum_{
 \pi\in \CompEven{2r}
  }
  \sum_{i_1,i_2,\dots,i_{2r}\in [n]}
   a_{i_{1},i_{2}}      a_{i_{3},i_{4}} \dotsm
             a_{i_{2r-1},i_{2r}}K_\pi(X_{i_{1}},X_{i_{2}},\dots,X_{i_{2r}})
    \\ &\phantom{=}+ \sum_{
  \pi\in \CompOdd{2r}
  }
  \sum_{i_1,i_2,\dots,i_{2r}\in [n]}
   a_{i_{1},i_{2}}      a_{i_{3},i_{4}} \dotsm       a_{i_{2r-1},i_{2r}}K_\pi(X_{i_{1}},X_{i_{2}},\dots,X_{i_{2r}}).
 \end{align*}
 We claim that in this decomposition the second sum cancels.
 To see this, we observe that the involution constructed in
 section~\ref{ssec:involution}  extends to an involution
 \begin{align*}
   [n]^{2r}\times \CompOdd{2r} &\to   [n]^{2r}\times \CompOdd{2r} \\
   (i_1,i_2,\dots,i_{2r};\pi) &\mapsto (i_{\sigma_\pi(1)},i_{\sigma_\pi(2)},\dots,i_{\sigma_\pi(2r)};\sigma_\pi\cdot\pi)
 \end{align*}
 where $\sigma_\pi$ is the permutation constructed
  according to the type I/II/III of $\pi$.
 Therefore we have
 \begin{multline*}
   \sum_{\pi\in \CompOdd{2r}  }
    \sum_{i_1,i_2,\dots,i_{2r}\in [n]}
   a_{i_{1},i_{2}}      a_{i_{3},i_{4}} \dotsm
   a_{i_{2r-1},i_{2r}}K_\pi(X_{i_{1}},X_{i_{2}},\dots,X_{i_{2r}})
   \\
   =
   \sum_{\pi\in \CompOdd{2r}  }
    \sum_{i_1,i_2,\dots,i_{2r}\in [n]}
   a_{i_{\sigma_\pi(1)},i_{\sigma_\pi(2)}}      a_{i_{\sigma_\pi(3)},i_{\sigma_\pi(4)}} \dotsm       a_{i_{\sigma_\pi(2r-1)},i_{\sigma_\pi(2r)}}K_{\sigma_\pi\cdot\pi}(X_{i_{\sigma_\pi(1)}},X_{i_{\sigma_\pi(2)}},\dots,X_{i_{\sigma_\pi(2r)}})
 \end{multline*}
 
 Now by Lemma~\ref{lem:involution} the effect on a term is
 \begin{multline*}
   a_{i_{\sigma(1)},i_{\sigma(2)}}    a_{i_{\sigma(3)},i_{\sigma(4)}} \dotsm
   a_{i_{\sigma(2r-1)},i_{\sigma(2r)}}
   K_{\sigma\cdot\pi}(X_{i_{\sigma(1)}},X_{i_{\sigma(2)}},\dots,X_{i_{\sigma(2r)}})
   \\
   =
   -    a_{i_1,i_2}    a_{i_3,i_4} \dotsm    a_{i_{2r-1},i_{2r}}
        K_\pi(X_1,X_2,\dots,X_{2r})
 \end{multline*}

 and therefore the sum vanishes. 
 
 This concludes the first circle of implications;
 let us now turn to the remaining ones.
 
  \eqref{main:it:cancel}+\eqref{main:it:antisym}$\implies{}$\eqref{main:it:sym}.
  We expand the product formula
  \eqref{twr:produktargumentow} and obtain
  \begin{align*}
   K_r(T_n)
    &=
      \sum_{i_1,i_2,\dots,i_{2r}\in[n]}
        a_{i_{1},i_{2}}a_{i_{3},i_{4}}\dotsm a_{i_{2r-1},i_{2r}}
        K_r(\X_{i_{1}}\X_{i_{2}}, \X_{i_{3}}\X_{i_{4}},\dots,\X_{i_{2r-1}}\X_{i_{2r}})
      \\
    &= \sum_{i_1,i_2,\dots,i_{2r}\in[n]}
      \sum_{\substack{\pi\in \NC(2r) \\ \pi\vee\makeatletter{}\begin{tikzpicture}[inner sep=0pt,scale=0.04]
\draw (2,0)--(2,4.5);
\draw (8,0)--(8,4.5);
\draw (14,0)--(14,4.5);
\draw (20,0)--(20,4.5);
\node at (30,2){$\cdots$};
\draw (38,0)--(38,4.5);
\draw (44,0)--(44,4.5);
\draw (2,4.5)--(8,4.5);
\draw (14,4.5)--(20,4.5);
\draw (38,4.5)--(44,4.5);
\end{tikzpicture}
 =\hat{1}_{2r}} }
        a_{i_{1},i_{2}}a_{i_{3},i_{4}}\dotsm a_{i_{2r-1},i_{2r}}
        K_\pi(\X_{i_{1}},\X_{i_{2}}, \X_{i_{3}},\X_{i_{4}},\dots,\X_{i_{2r-1}},\X_{i_{2r}}),
\intertext{now by assumption  \eqref{main:it:cancel} we may omit all odd cumulants from this formula,
    i.e., we can restrict the sum to even partitions}
    &= \sum_{i_1,i_2,\dots,i_{2r}\in[n]}
      \sum_{\substack{\pi\in \NCeven(2r) \\ \pi\vee\makeatletter{}\begin{tikzpicture}[inner sep=0pt,scale=0.04]
\draw (2,0)--(2,4.5);
\draw (8,0)--(8,4.5);
\draw (14,0)--(14,4.5);
\draw (20,0)--(20,4.5);
\node at (30,2){$\cdots$};
\draw (38,0)--(38,4.5);
\draw (44,0)--(44,4.5);
\draw (2,4.5)--(8,4.5);
\draw (14,4.5)--(20,4.5);
\draw (38,4.5)--(44,4.5);
\end{tikzpicture}
 =\hat{1}_{2r}} }
        a_{i_{1},i_{2}}a_{i_{3},i_{4}}\dotsm a_{i_{2r-1},i_{2r}}
        K_\pi(\X_{i_{1}},\X_{i_{2}}, \X_{i_{3}},\X_{i_{4}},\dots,\X_{i_{2r-1}},\X_{i_{2r}})
    \\
\intertext{and by Lemma~\ref{lemm:lematoparzystych} these partitions have a
    special shape}
    &= \sum_{i_1,i_2,\dots,i_{2r}\in[n]}
      \sum_{\substack{\pi\in \NCeven(2r) \\ \pi\geq\pispecial{r}}}
        a_{i_{1},i_{2}}a_{i_{3},i_{4}}\dotsm a_{i_{2r-1},i_{2r}}
        K_\pi(\X_{i_{1}},\X_{i_{2}}, \X_{i_{3}},\X_{i_{4}},\dots,\X_{i_{2r-1}},\X_{i_{2r}})
    \\
\intertext{and by freeness we can impose the condition $i_{2k}=i_{2k+1}$ on the indices}
    &= \sum_{i_1,i_2,\dots,i_{r}\in[n]}
      \sum_{\substack{\pi\in \NCeven(2r) \\ \pi\geq\pispecial{r}}}
        a_{i_{1},i_{2}}a_{i_{2},i_{3}}\dotsm a_{i_{r},i_{1}}
        K_\pi(\X_{i_{1}},\X_{i_{2}}, \X_{i_{2}},\X_{i_{3}},\dots,\X_{i_{r}},\X_{i_{1}})
    \\
    \intertext{next we apply the mirror permutation $i_j\to i_{r+1-j}$ to the indices;
    this fixes $i_1$ and mirrors the remaining ones}
    &= \sum_{i_1,i_2,\dots,i_{r}\in[n]}
      \sum_{\substack{\pi\in \NCeven(2r) \\ \pi\geq\pispecial{r}}}
        a_{i_{1},i_{r}}a_{i_{r},i_{r-1}}\dotsm a_{i_{2},i_{1}}
        K_\pi(\X_{i_{1}},\X_{i_{r}}, \X_{i_{r}},\X_{i_{r-1}},\dots,\X_{i_{2}},\X_{i_{1}})
    \\
    &= \sum_{i_1,i_2,\dots,i_{r}\in[n]}
    (-1)^ra_{i_{r},i_{1}}a_{i_{r-1},i_{r}}\dotsm a_{i_{1},i_{2}}      
        K_r(\X_{i_{1}}\X_{i_{r}}, \X_{i_{r}}\X_{i_{r-1}},\dots,\X_{i_{2}}\X_{i_{1}})
    \\
    \intertext{where we used assumption \eqref{main:it:antisym} that the matrix $A$ is skew symmetric.
        Now we apply Lemma~\ref{lem:permutedcumulants} (note that the random variables $X_i$
    are free and self-adjoint and therefore the cumulants are real valued) and obtain}
    &= (-1)^r \sum_{i_1,i_2,\dots,i_{r}\in[n]}
      a_{i_{1},i_{2}} a_{i_{3},i_{4}}\dotsm a_{i_{r-1},i_{r}}
        K_r(\X_{i_{1}}\X_{i_{2}}, \X_{i_{2}}\X_{i_{3}},\dots,\X_{i_{r}}\X_{i_{1}})
 \end{align*}
 which implies $K_r(T_n)=0$ for odd $r$.

  \eqref{main:it:sym}$\implies{}$\eqref{main:it:antisym} is easily verified.
  Fix $i$, put $X_i=I$ and $X_j=0$ for $j\ne i$. Then $T_n=a_{ii}^2I$ and property
  \eqref{main:it:sym} implies that $a_{ii}=0$. To cope with the off-diagonal
  terms,
  put $X_1=X_2=I$, then $T= a_{1,2}X_1X_2+a_{2,1}X_2X_1=2\Re a_{1,2}I$ is odd
  and therefore $\Re a_{1,2}=0$. This implies that $A=-A^T$.

\end{proof}

\begin{Rem}  The traces of the odd powers of a skew-symmetric matrix are zero. This fact can
  be generalized as follows.
  A selfadjoint matrix $A\in M_n(\IC)$ is skew-symmetric if and only if for
  every $\pi\in\NC(r)$ where $r$ is  odd we have $\Tr(\ED[\pi](A))=0$.
 \end{Rem}

\subsection{Distributions of quadratic forms}
In \cite[Theorem 1.2]{NicaSpeicher:1998} the authors 
provide an analytic description of the $R$-transform of free commutators in
terms of the  combinatorial convolution $\boxstar$ of the 
even cumulant transforms. To be specific, if $X$ is free from $Y$, then
$$
\Rtrans_{i(XY-YX)}(z)=2(\Rtranseven{X}\boxstar\Rtranseven{Y}\boxstar\zeta_1)(z^2).
$$ 
where by  $\Rtranseven{Z}(z):=\sum_{n=1}^\infty K_{2n}(Z)z^n$ we denote the generating
function of the even free cumulants of $Z\in \A_{sa}$.
An important ingredient in  proof of the preceding results is the notion of
$R$-diagonality. 
An $R$-diagonal pair is a pair of random variables $A$ and $B$
such that  all cumulants vanish with the exception of the alternating ones, i.e.,
those of the form  $K_{2r}(A,B,A,B,\dots,A,B)$ and $K_{2r}(B,A,B,A,\dots,B,A)$.
It turns out that for free even elements $X$ and $Y$ the products
$XY$ and $YX$ form an $R$-diagonal pair and therefore the moments
of the commutator $i(XY-YX)$ are computable.

This observation however is specific to the commutator.
The result below gives an alternative combinatorial description and holds for
arbitrary quadratic forms in even elements. 
For this purpose we define for    $A \in M_n(\IC)$ the generating function 
\begin{equation*}
  f_{ A}(z_1,\dots,z_n):=\sum_{r=1}^\infty\sum_{ i_1,\dots,i_r=1}^n\Tr(AE_{i_1}AE_{i_2}\dots AE_{i_r})z_{i_1}\dots z_{i_r}.
\end{equation*}
and recall that the \emph{Hadamard product} of two formal power series
$f(z) = \sum a_nz^n$ and $g(z)=\sum b_n z^n$
is defined as
$$
f(z) \odot g(z)  = \sum a_nb_nz^n
.
$$
\begin{theo}
  Let $\X_1, \X_2,\dots, \X_n\in \A_{sa}$ be a free family  
  of even random variables.
  Then for any  selfadjoint matrix  $A=[a_{i,j}]_{i,j=1}^n\in M_n(\IC)$ 
  the $\Rtrans$-transform of the quadratic form
  $T_n=\sum_{i,j}^na_{i,j}\X_i\X_j$ can be represented as the Hadamard product
  $$\Rtrans_{T_n}(z)=\big[f_A \odot\big((\Rtranseven{\X_{1}}+\dots+\Rtranseven{ \X_{n}})\boxstar\zeta_n\big)\big](\underbrace{z,\dots,z }_{n}).$$
\end{theo}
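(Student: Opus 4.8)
The plan is to establish the identity by comparing coefficients, matching the expansion $\Rtrans_{T_n}(z)=\sum_{r\geq1}K_r(T_n)z^r$ with the coefficients of the series on the right (where the Hadamard product of $n$-variable noncommutative series is understood coefficientwise on the monomials $z_{i_1}\cdots z_{i_r}$). The starting point is Proposition~\ref{prop:CykliczneVariancja}: since the $\X_i$ form a free family of even elements, formula~\eqref{eq:kumulantsamplevariancenotiid} expresses $K_r(T_n)$ as a sum over tuples $(i_1,\dots,i_r)\in[n]^r$ of the weight $\Tr(AE_{i_1}AE_{i_2}\cdots AE_{i_r})$ times an inner sum of cumulants $K_\pi(\X_{i_r},\X_{i_1},\X_{i_1},\X_{i_2},\dots,\X_{i_{r-1}},\X_{i_r})$, with $\pi$ ranging over the even partitions in $\NCeven(2r)$ satisfying $\pi\vee\onetwo{r}=\hat{1}_{2r}$. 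The index map $h$ attached to that argument list is constant on every block of $\pispecial{r}$, hence $\ker h\geq\pispecial{r}$; by vanishing of mixed cumulants~\eqref{eq:kerh>=pi} only partitions with $\ker h\geq\pi$ contribute; and by Lemma~\ref{lemm:lematoparzystych} the even $\pi$ with $\pi\vee\onetwo{r}=\hat{1}_{2r}$ are precisely the elements of the interval $[\pispecial{r},\hat{1}_{2r}]$, a lattice isomorphic to $\NC(r)$. Thus the inner sum runs over $\pispecial{r}\leq\pi\leq\ker h$.

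I would then push this sum through the isomorphism $[\pispecial{r},\hat{1}_{2r}]\cong\NC(r)$. Identify the $r$ blocks of $\pispecial{r}$ with the points $1,\dots,r$ in cyclic order, so that for each $k$ the block of $\pispecial{r}$ supporting the repeated index $i_k$ corresponds to the point $k$. A partition $\pi\in[\pispecial{r},\hat{1}_{2r}]$ then corresponds to a $\bar\pi\in\NC(r)$ such that a block $\bar B\in\bar\pi$ of cardinality $m$ arises from a block of $\pi$ of cardinality $2m$ on which $h$ is constant; this block therefore contributes the factor $K_{2m}(\X_{i_{\bar B}})$, where $i_{\bar B}$ is the common value of $i_k$ for $k\in\bar B$. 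Under the same identification the constraint $\ker h\geq\pi$ becomes $\bar\pi\leq\ker(i_1,\dots,i_r)$. This rewrites $K_r(T_n)$ as $\sum_{i_1,\dots,i_r\in[n]}\Tr(AE_{i_1}\cdots AE_{i_r})\sum_{\bar\pi\in\NC(r),\ \bar\pi\leq\ker(i_1,\dots,i_r)}\prod_{\bar B\in\bar\pi}K_{2|\bar B|}(\X_{i_{\bar B}})$.

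It remains to identify the right hand side with the generating series of this quantity. By Definition~\ref{def:boxedconvolution}, and because every coefficient of $\zeta_n$ equals $1$ so that $\Cf_{(i_1,\dots,i_r),\Krewr{\pi}}(\zeta_n)=1$ for all $\pi$, one has $\Cf_{(i_1,\dots,i_r)}\!\bigl((\Rtranseven{\X_1}+\dots+\Rtranseven{\X_n})\boxstar\zeta_n\bigr)=\sum_{\pi\in\NC(r)}\prod_{B\in\pi}\Cf_{(i_j:\,j\in B)}(\Rtranseven{\X_1}+\dots+\Rtranseven{\X_n})$. Now $\Rtranseven{\X_1}+\dots+\Rtranseven{\X_n}$ carries only the \emph{diagonal} coefficients $\Cf_{(i,\dots,i)}=K_{2m}(\X_i)$, so a block $B$ of $\pi$ contributes $K_{2|B|}(\X_{i_B})$ when the indices along $B$ agree and $0$ otherwise; this forces $\pi\leq\ker(i_1,\dots,i_r)$ and reproduces exactly the inner sum of the previous paragraph. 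Forming the Hadamard product with $f_A$ multiplies the coefficient of $z_{i_1}\cdots z_{i_r}$ by $\Tr(AE_{i_1}AE_{i_2}\cdots AE_{i_r})$, and the specialization $z_1=\dots=z_n=z$ collapses each degree-$r$ monomial to $z^r$; summing over $r$ gives $\sum_{r\geq1}K_r(T_n)z^r=\Rtrans_{T_n}(z)$, as claimed.

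The delicate step is the bookkeeping in the second paragraph: one must check at once that the isomorphism of Lemma~\ref{lemm:lematoparzystych} halves block sizes in the right way, that it matches the cyclic argument pattern $\X_{i_r},\X_{i_1},\X_{i_1},\dots,\X_{i_{r-1}},\X_{i_r}$ of the cumulant with the index tuple $(i_1,\dots,i_r)$ that weights $\Tr(AE_{i_1}\cdots AE_{i_r})$ in $f_A$, and that it transports $\ker h\geq\pi$ into $\bar\pi\leq\ker(i_1,\dots,i_r)$; once this dictionary is fixed, the two sides are visibly the same formal power series. As a sanity check, for standard semicircular $\X_i$ only $\bar\pi=\hat{0}_r$ survives, $(\Rtranseven{\X_1}+\dots+\Rtranseven{\X_n})\boxstar\zeta_n=\zeta_n$, and the formula reduces to $\Rtrans_{T_n}(z)=\sum_r\Tr(A^r)z^r$, consistent with~\eqref{eq:cumQnsemi}.
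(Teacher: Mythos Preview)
Your proof is correct and follows essentially the same route as the paper: start from Proposition~\ref{prop:CykliczneVariancja}, use Lemma~\ref{lemm:lematoparzystych} to replace the inner sum by one over $\NC(r)$, and identify the result with the coefficient of $z_{i_1}\cdots z_{i_r}$ in $(\Rtranseven{\X_1}+\dots+\Rtranseven{\X_n})\boxstar\zeta_n$. The only cosmetic difference is that the paper first rotates the cumulant arguments by one position via traciality (so that the constraint becomes $\pi\geq\onetwo{r}$ rather than $\pi\geq\pispecial{r}$) and then packages the inner sum as the moment $\tau(\tilde X_{i_1}\cdots\tilde X_{i_r})$ of auxiliary free variables $\tilde X_i$ with $K_m(\tilde X_i)=K_{2m}(X_i)$, invoking \cite[Proposition~17.4]{NicaSpeicher:2006} for the boxed-convolution identity, whereas you work directly with the interval $[\pispecial{r},\hat{1}_{2r}]$ and unpack Definition~\ref{def:boxedconvolution} by hand.
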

\begin{proof}
Using  Proposition \ref{prop:CykliczneVariancja} (\ref{it:cyclic3}), we have 
\begin{align*}  
 K_r(T_n)&=
\sum_{i_1,\dots,i_r\in[n]} 
            \Tr(AE_{i_1}AE_{i_2}\dots AE_{i_r})\,
            \sum_{\substack{ \pi\in \NCeven(2r)\\ 
                \pi \vee \onetwo{r}=\hat{1}_{2r}}}
            K_\pi(X_{i_r},X_{i_1},X_{i_1},X_{i_2},\dots,X_{i_{r-1}},X_{i_r}), 
\intertext{and by traciality of cumulants this is} 
&=\sum_{i_1,\dots,i_r\in[n]} 
            \Tr(AE_{i_1}AE_{i_2}\dots AE_{i_r})\,
            \sum_{\substack{ \pi\in \NCeven(2r)\\ 
               \pi \geq \makeatletter{}\begin{tikzpicture}[inner sep=0pt,scale=0.04]
\draw (2,0)--(2,4.5);
\draw (8,0)--(8,4.5);
\draw (14,0)--(14,4.5);
\draw (20,0)--(20,4.5);
\node at (30,2){$\cdots$};
\draw (38,0)--(38,4.5);
\draw (44,0)--(44,4.5);
\draw (2,4.5)--(8,4.5);
\draw (14,4.5)--(20,4.5);
\draw (38,4.5)--(44,4.5);
\end{tikzpicture}
 }}
            K_\pi(X_{i_1},X_{i_1},X_{i_2},\dots,X_{i_{r-1}},X_{i_r}, X_{i_r}).     
  \intertext{
  Now let $\tilde{X}_i$ be the formal random variable obtained from $X_i$
  by skipping all odd cumulants, i.e., $K_n(\tilde{X}_i)=K_{2n}(X_i)$, then we
  can use the isomorphism from Lemma~\ref{lemm:lematoparzystych} and continue
} 
 &= \sum_{ i_1,\dots,i_r\in[n]}
    \Tr(AE_{i_1}AE_{i_2}\dots AE_{i_r})
    \,
    \tau (\tilde{X}_{i_1}\tilde{X}_{i_2}\dotsm\tilde{X}_{i_r}).
\intertext{Finally by \cite[Proposition 17.4]{NicaSpeicher:2006}, we can write
   this as}
    &\sum_{ i_1,\dots,i_r\in[n]}\Tr(AE_{i_1}AE_{i_2}\dots AE_{i_r})\Cf_{(i_1,\dots,i_r)}\big((\Rtranseven{\X_{1}}+\dots+\Rtranseven{ \X_{n}})\boxstar \zeta_n\big)
                        \end{align*}
which finishes the proof.
\end{proof}

\subsection{Preservation of free infinite divisibility for higher order polynomials}

There are many higher order polynomials which cancel odd
cumulants and preserve infinite divisibility, take for example higher free
commutators like $[[X,Y],Z]$ or 
$[[X,Y],[[A,B],Z]]$. Similarly, take a skew-symmetric quadratic form 
$T_n=\sum_{i,j=1}^na_{i,j}\X_i\X_j$
as in Theorem~\ref{prop:znikaniekumulant},
then  $T_n$ is symmetric and we conclude from \cite[Theorem~2.2]{ArizmendiHasebeSakuma:2013}
that $T_n^2$ preserves free infinite divisibility.

The purpose of
this subsection is to produce   higher order polynomials which preserve free
infinite divisibility but don't exhibit the cancellation phenomenon;
we do not know however whether the reverse implication is true.

For the concept of \emph{free regular distributions} we refer to
\cite{ArizmendiHasebeSakuma:2013}.
\begin{prop}
Let  $X,Y, Z$ be free random variables such that $X$ and $Y$ are freely
infinitely divisible. Then
the selfadjoint element
$[X,Y]Z[X,Y]$ has
compound free Poisson distribution of rate $1$ with jump distribution $\mu_Z\boxtimes\sigma$ for some free regular distribution $\sigma$.
Consequently it is freely infinitely divisible and the odd cumulants of $Z$ do not cancel.
\end{prop}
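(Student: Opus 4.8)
The plan is to reduce $[X,Y]Z[X,Y]$ to a positive element sandwiching $Z$, and to recognise that positive element's law as a compound free Poisson distribution. Put $S:=i(XY-YX)$, a selfadjoint element with $[X,Y]Z[X,Y]=-SZS$ (with the selfadjoint normalisation $[X,Y]:=i(XY-YX)$ one instead gets $+SZS$; the resulting reflection $Z\leftrightarrow-Z$ affects nothing below). As a skew-symmetric quadratic form in the free pair $X,Y$, its system matrix is Hermitian, so Theorem~\ref{prop:znikaniekumulant} makes $S$ symmetric and shows it exhibits cancellation of odd cumulants; since $X,Y$ are $\boxplus$-infinitely divisible, Theorem~\ref{twr:niskonczonapodzielnoscEL} (equivalently \cite{ArizmendiHasebeSakuma:2013}) makes $S$ $\boxplus$-infinitely divisible. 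Thus $S$ is a symmetric, even, FID element, and it lies in the algebra generated by $X,Y$, hence $S$ and $S^2$ are free from $Z$. Traciality gives $\tau\big((SZS)^r\big)=\tau\big((S^2Z)^r\big)=\tau\big((|S|Z|S|)^r\big)$ for all $r$, so the law of $SZS$ equals the law of $|S|Z|S|$, which by definition of free multiplicative convolution is $\mu_Z\boxtimes\mu_{S^2}$.

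Next I would identify $\mu_{S^2}$. Applying Proposition~\ref{prop:CykliczneVariancja}(ii) with $n=1$ and $A$ the $1\times1$ identity matrix (equivalently, Theorem~\ref{thm:krawczyk} applied to $S^2=S\cdot S$ together with Lemma~\ref{lemm:lematoparzystych}, which restricts the relevant partitions to $[\pispecial{r},\hat1_{2r}]\cong\NC(r)$ and matches a block of size $m$ with one of size $2m$) gives
\[
K_r(S^2)=\sum_{\rho\in\NC(r)}\ \prod_{B\in\rho}K_{2|B|}(S).
\]
By the moment--cumulant formula the right-hand side equals $m_r(\sigma)$, where $\sigma$ is the probability measure whose free cumulants are the even free cumulants of $S$, i.e.\ $K_r(\sigma)=K_{2r}(S)$. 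Hence the free cumulants of $\mu_{S^2}$ are the moments of $\sigma$, i.e.\ $\mu_{S^2}$ is the compound free Poisson distribution of rate $1$ with jump distribution $\sigma$. (Sanity check: for $X,Y$ standard semicircular, \cite{NicaSpeicher:1998} yields $K_{2r}(S)=2$ for all $r$, so $\sigma$ is the free Poisson law of rate $2$, which is free regular.)

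The technical heart is to show that $\sigma$ is free regular. Since $S$ is symmetric and $\boxplus$-infinitely divisible, its Voiculescu transform has the Nevanlinna--Pick form \eqref{eq:LevyKhintchine}, which forces $\big(K_{m+2}(S)\big)_{m\ge0}$ to be the moment sequence of a finite positive measure on $\R$; as $S$ is even all odd entries of that sequence vanish, so $\big(K_{2r}(S)\big)_{r\ge1}$ -- the free-cumulant sequence of $\sigma$ -- is a Stieltjes moment sequence, i.e.\ the moment sequence of a finite positive measure on $[0,\infty)$. A probability measure whose free-cumulant sequence is a Stieltjes moment sequence is precisely a weak limit of compound free Poisson laws with nonnegative jumps, hence free regular; this is the step where I expect to rely on the structure theory of free regular distributions from \cite{ArizmendiHasebeSakuma:2013}, and it is the main obstacle.

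Finally I would assemble the pieces. By the first two paragraphs $\mu_{[X,Y]Z[X,Y]}=\mu_{-Z}\boxtimes\mu_{S^2}=\mu_{-Z}\boxtimes\mathrm{cfP}(1,\sigma)$, writing $\mathrm{cfP}(1,\sigma)$ for the compound free Poisson law of rate $1$ with jump $\sigma$; and free multiplicative convolution distributes over the jump of a rate-one compound free Poisson law, $\nu\boxtimes\mathrm{cfP}(1,\sigma)=\mathrm{cfP}(1,\nu\boxtimes\sigma)$ (this is a short verification on free cumulants, and is in \cite{ArizmendiHasebeSakuma:2013}). Hence $[X,Y]Z[X,Y]$ has compound free Poisson distribution of rate $1$ with jump distribution $\mu_{-Z}\boxtimes\sigma$ ($=\mu_Z\boxtimes\sigma$ in the selfadjoint normalisation), and since compound free Poisson laws are $\boxplus$-infinitely divisible, so is $[X,Y]Z[X,Y]$. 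For the last assertion, $\tau\big([X,Y]Z[X,Y]\big)=-\tau(S^2)\,\tau(Z)$ is a nonzero multiple of the odd cumulant $\tau(Z)=K_1(Z)$, so the odd cumulants of $Z$ do not cancel.
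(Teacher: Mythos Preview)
Your argument is correct and follows the same overall arc as the paper's proof: identify the law of $[X,Y]Z[X,Y]$ as $\mu_Z\boxtimes\mu_{S^2}$ with $S=i(XY-YX)$ symmetric and FID, decompose $\mu_{S^2}$ as $m\boxtimes\sigma$ with $m$ free Poisson of rate $1$ and $\sigma$ free regular, and then use associativity of $\boxtimes$ to obtain a compound free Poisson with jump $\mu_Z\boxtimes\sigma$.

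The only substantive difference is in how the decomposition $\mu_{S^2}=m\boxtimes\sigma$ is obtained. The paper simply invokes \cite[Theorem~2.2]{ArizmendiHasebeSakuma:2013}, which asserts precisely that the square of any even FID law factors this way with $\sigma$ free regular. You instead re-derive this in the present setting: the computation $K_r(S^2)=\sum_{\rho\in\NC(r)}\prod_{B\in\rho}K_{2|B|}(S)$ via Proposition~\ref{prop:CykliczneVariancja}(ii) (or equivalently Lemma~\ref{lemm:lematoparzystych}) identifies $\sigma$ explicitly as the law with free cumulants $K_r(\sigma)=K_{2r}(S)$, and then you argue free regularity of $\sigma$ from the Nevanlinna--Pick representation of $S$, which makes $(K_{2r}(S))_{r\geq1}$ a Stieltjes moment sequence. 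What you gain is a concrete description of the jump component $\sigma$ (the ``squaring'' of the determining sequence of $S$); what the paper gains is brevity, since the general fact is already available in \cite{ArizmendiHasebeSakuma:2013}. Your remark that this free-regularity step is ``the main obstacle'' is accurate, and it is exactly the content of the cited theorem.
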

\begin{proof}
First recall that
\cite[Theorem~2.2]{ArizmendiHasebeSakuma:2013}
asserts that if a random variable has even FID law $\mu$ then the law of its square can be decomposed $\mu^2=m\boxtimes\sigma$ where $m$ is the free Poisson
law of rate $1$ and $\sigma$ is free regular.

Next recall that for any law $\nu$ the law $\nu\boxtimes m$ is the law 
of the free compression with a semicircular random variable and
therefore free Poisson with jump distribution $\nu$.

Now let $\mu$ be the law of  $i[X,Y]$.
It follows from \cite[Theorem~1.2]{NicaSpeicher:1998} and
\cite[Corollary~6.5]{ArizmendiHasebeSakuma:2013} that $\mu$ is both even and
freely infinitely divisible.
The law of $[X,Y]Z[X,Y]$ is $\mu_Z\boxtimes\mu^2=\mu_Z\boxtimes\sigma\boxtimes
m$ and therefore our random variable has compound free Poisson
distribution with rate $1$ and jump distribution $\mu_Z\boxtimes\sigma$.
\end{proof}

\subsection{The generalized tetilla law }
Formulas \eqref{eq:kumulantsamplevariancenotiid}
and \eqref{eq:kumulantsamplevariance} are hard to evaluate in general.
There are however two settings for which the distribution can be
computed explicitly.
The first result is  a kind of central limit theorem for sums of commutators 
which gives rise to the \emph{free tangent law} and which will appear 
in a separate paper  \cite{EjsmontLehner:2020:tangent}.

The second result is presented in this section and concerns sums of 
commutators of semicircular elements for which the sum
\eqref{eq:kumulantsamplevariancenotiid} simplifies 
considerably.

Motivated by \cite{DeyaNourdin:2012} we propose the following definition.
\begin{defi}
  Let $\X_1, \X_2,\dots, \X_n\in \A_{sa}$ with $n \geq 2$  be a free  family of semicircular random variables of variance one.
The law of the random variable 
$\sum_{k,j=1}^ni(\X_k\X_j-\X_j\X_k)$  is called the \emph{generalized tetilla
  law} with $n$ degrees of freedom. We denote this distribution by $\T_n$.
\end{defi}
The \emph{tangent numbers}
\begin{equation}
  \label{eq:tangentnumbers}
  T_k=(-1)^{k+1}\frac{4^{k}(4^{k}-1){B_{2k}}}{2k}
\end{equation}
for $k\in\N $ are the Taylor coefficients of the tangent function
$$
\tan z = \sum_{n=1}^\infty T_n\frac{z^n}{n!} = z + \frac{2}{3!} z^3 +  \frac{16}{5!}z^5 +\frac{272}{7!}z^7 + \dotsm,
$$
see \cite[Page 287]{GrahamKnuthPatashnik:1994}).

On the other hand let us denote by $A_n^{(k)}$ the
\emph{arctangent numbers} 
(see \cite[p.~260]{Comtet} or 
\cite{Cvijovic:2011:higher}) 
defined by their exponential generating function
\begin{equation}
  \label{eq:def-arctannumbers}
  \frac{(\arctan z)^k}{k!} = \sum_{n=k}^\infty \frac{A_n^{(k)}}{n!}z^n;
\end{equation}
\begin{prop} \label{prop:tetilla}
The generalized tetilla law with $n$ degrees of freedom has the following properties.
 \begin{enumerate}[(i)]
  \item It is $\boxplus$-infinitely divisible with discrete L\'evy measure
   $\nu=\delta_{\cot\left(\frac{\pi}{2n}\right) }+\dots+\delta_{\cot\left(\frac{\pi}{2n}+\frac{n-1}{n}\pi\right) }$;
  \item It is symmetric and its even cumulants are
   \begin{equation}
     \label{eq:K2m=tangent}
   K_{2m}
    = (-1)^{m}n
    + \frac{1}{(2m-1)!}   \sum_{k=1}^m n^{2k} A_{2m}^{(2k)}\,T_{2k-1}
   \end{equation}
   where by $T_k$ and $A_{m}^{(k)}$ we denote the tangent and  arctangent numbers, respectively.
\item The $R$-transform is equal to 
$$R_{\T_n}(z)=\frac{n\tan(n\arctan z)-nz}{1+z^2}$$
\end{enumerate} 
\end{prop}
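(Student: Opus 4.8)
The plan is to reduce all three assertions to the spectrum of the system matrix of $T_n$. Written in the form $\sum_{k,l}a_{k,l}\X_k\X_l$, the sum of commutators $\sum_{k<l}i(\X_k\X_l-\X_l\X_k)$ has system matrix $A=i\tilde{A}_n$, where $\tilde{A}_n=[\operatorname{sgn}(l-k)]_{k,l=1}^{n}$ is the real skew-symmetric ``sign matrix''; in particular $A=A^*=-A^{T}$, so by Theorem~\ref{prop:znikaniekumulant} the form $T_n$ preserves free infinite divisibility and has symmetric distribution for any free family, and since a standard semicircular family is freely infinitely divisible this already settles symmetry and free infinite divisibility. Moreover the $\X_i$ are even (standard semicircular), so by formula~\eqref{eq:cumQnsemi} we have the compact expression $K_r(T_n)=\Tr(A^{r})$ for all $r\ge 1$. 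It therefore only remains to diagonalize $A$ and to recognize the resulting generating functions.

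The spectral computation is the one genuinely nontrivial step. To solve $Av=\lambda v$ I would pass to the partial sums $S_j=v_1+\dots+v_j$ (so $S_0=0$); the eigenvector equation $i\bigl(S_n-S_j-S_{j-1}\bigr)=\lambda(S_j-S_{j-1})$ rearranges to the first-order inhomogeneous recursion $(\lambda+i)S_j=(\lambda-i)S_{j-1}+iS_n$, whose general solution is $S_j=\tfrac12 S_n+c\bigl(\tfrac{\lambda-i}{\lambda+i}\bigr)^{j}$. The condition $S_0=0$ forces $c=-\tfrac12 S_n$, and then (noting $S_n\ne 0$, since $S_n=0$ would make $v=0$) consistency at $j=n$ forces $\bigl(\tfrac{\lambda-i}{\lambda+i}\bigr)^{n}=-1$. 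Solving $\tfrac{\lambda-i}{\lambda+i}=e^{i\pi(2k+1)/n}$ and using $\tfrac{1+e^{i\theta}}{1-e^{i\theta}}=i\cot(\theta/2)$ gives the $n$ distinct eigenvalues $-\cot\theta_k$, where $\theta_k=\tfrac{\pi}{2n}+\tfrac{k\pi}{n}$, $k=0,\dots,n-1$; since $\theta_k\in(0,\pi)$ and $-\cot\theta_k=\cot(\pi-\theta_k)=\cot\theta_{n-1-k}$, the spectrum of $A$ as a multiset is $\{\cot\theta_0,\dots,\cot\theta_{n-1}\}$, so $K_r(T_n)=\sum_{k=0}^{n-1}\cot^{r}\theta_k$. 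I expect this partial-summation argument, together with getting the angle bookkeeping right, to be the main (though elementary) obstacle.

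For (iii) I would then compute $\Rtrans_{\T_n}(z)=zR_{\T_n}(z)=\sum_{r\ge 1}K_r(T_n)z^{r}=\sum_{k}\tfrac{\cot\theta_k\cdot z}{1-\cot\theta_k\cdot z}$, i.e.\ $R_{\T_n}(z)=\sum_{k=0}^{n-1}\tfrac{1}{\tan\theta_k-z}$. Substituting $z=\tan\phi$ and using $\tan\theta_k-\tan\phi=\tfrac{\sin(\theta_k-\phi)}{\cos\theta_k\cos\phi}$ together with $\cos\theta_k=\cos(\theta_k-\phi)\cos\phi-\sin(\theta_k-\phi)\sin\phi$, each summand becomes $\cos^{2}\phi\,\cot(\theta_k-\phi)-\sin\phi\cos\phi$; summing over $k$ and applying the classical identity $n\cot(ny)=\sum_{k=0}^{n-1}\cot\bigl(y+\tfrac{k\pi}{n}\bigr)$ (the logarithmic derivative of $\sin(ny)=2^{n-1}\prod_{k=0}^{n-1}\sin(y+\tfrac{k\pi}{n})$) with $y=\tfrac{\pi}{2n}-\phi$ collapses $\sum_k\cot(\theta_k-\phi)$ to $n\cot(\tfrac{\pi}{2}-n\phi)=n\tan(n\phi)$. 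Converting back via $\cos^{2}\phi=\tfrac{1}{1+z^{2}}$, $\sin\phi\cos\phi=\tfrac{z}{1+z^{2}}$ and $\phi=\arctan z$ yields $R_{\T_n}(z)=\tfrac{n\tan(n\arctan z)-nz}{1+z^{2}}$, which is (iii).

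Finally (i) and (ii) drop out of (iii). For the Lévy measure in (i): free infinite divisibility was noted above, and matching the partial-fraction form $R_{\T_n}(z)=\sum_{k}\tfrac{\cot\theta_k}{1-\cot\theta_k\cdot z}$ with the free Lévy–Khintchine representation~\eqref{eq:LevyKhintchine} (via $\phi_{\T_n}(z)=R_{\T_n}(1/z)$, the drift working out to zero) exhibits the representing measure as the discrete nonnegative measure at the points $\cot\theta_k$, which after the standard normalization is $\nu=\delta_{\cot(\pi/2n)}+\dots+\delta_{\cot(\pi/2n+(n-1)\pi/n)}$. For (ii): symmetry holds because $\Tr(A^{r})=0$ for odd $r$ (equivalently $R_{\T_n}$ is odd). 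For the even cumulants I would expand $\Rtrans_{\T_n}(z)=\tfrac{nz\tan(n\arctan z)-nz^{2}}{1+z^{2}}$ as a power series: the piece $-\tfrac{nz^{2}}{1+z^{2}}=\sum_{m\ge 1}(-1)^{m}n\,z^{2m}$ contributes the summand $(-1)^{m}n$; and since $n\tan(nw)=\sum_{k\ge 1}\tfrac{n^{2k}T_{2k-1}}{(2k-1)!}w^{2k-1}$ and $\arctan'(z)=\tfrac{1}{1+z^{2}}$, we get $\tfrac{nz\tan(n\arctan z)}{1+z^{2}}=z\tfrac{d}{dz}\sum_{k\ge 1}\tfrac{n^{2k}T_{2k-1}}{(2k)!}(\arctan z)^{2k}$, whose coefficient of $z^{2m}$ — extracted from the defining series~\eqref{eq:def-arctannumbers} of the arctangent numbers, using $[z^{2m}]\,z\tfrac{d}{dz}f=2m\,[z^{2m}]f$ and $[z^{2m}]\tfrac{(\arctan z)^{2k}}{(2k)!}=\tfrac{A_{2m}^{(2k)}}{(2m)!}$ — equals $\tfrac{1}{(2m-1)!}\sum_{k=1}^{m}n^{2k}A_{2m}^{(2k)}T_{2k-1}$ (the sum stops at $k=m$ since $(\arctan z)^{2k}$ begins at order $z^{2k}$). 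Adding the two contributions gives formula~\eqref{eq:K2m=tangent}, completing the proof.
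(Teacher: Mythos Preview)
Your proof is correct and complete. The overall structure matches the paper's---reduce everything to $K_r(T_n)=\Tr(A^r)$ via~\eqref{eq:cumQnsemi}, then diagonalize $A$---but the execution differs in each of the three ingredients. For the spectrum, the paper simply quotes \cite[Lemma~2.1]{EjsmontLehner:2019:cotangent}, obtaining the characteristic polynomial $\chi_n(\lambda)=\tfrac12\bigl((\lambda-i)^n+(\lambda+i)^n\bigr)$ directly; your partial-sum recursion is a self-contained alternative that arrives at the same equation $\bigl(\tfrac{\lambda-i}{\lambda+i}\bigr)^n=-1$. For the $R$-transform, the paper exploits the characteristic polynomial and writes the generating function as the logarithmic derivative $\chi_n'/\chi_n$, then invokes the rational identity~\eqref{eq:tannatanz} for $\tan(n\arctan z)$; you instead sum the simple fractions via the classical cotangent addition formula $n\cot(ny)=\sum_{k=0}^{n-1}\cot(y+k\pi/n)$. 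For the explicit even cumulants~\eqref{eq:K2m=tangent}, the paper again defers to \cite[Corollary~6.4]{EjsmontLehner:2019:cotangent}, whereas you extract them by expanding the $R$-transform using the observation $\tfrac{z}{1+z^2}\,n\tan(n\arctan z)=z\,\tfrac{d}{dz}\sum_k\tfrac{n^{2k}T_{2k-1}}{(2k)!}(\arctan z)^{2k}$. Your argument is thus more self-contained, at the cost of the extra trigonometric bookkeeping you flagged; the paper's route is shorter on the page because it outsources the two combinatorial identities to the companion paper on cotangent sums.
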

\begin{proof}
Symmetry and  $\boxplus$-infinite divisibility  follow  directly from
 Theorem \ref{prop:znikaniekumulant}.
Since  $X_i$  are random variables of variance one, 
 we can compute the cumulants by using formula \eqref{eq:cumQnsemi} 
and evaluate to
$$K_r\big(i \sum_{k,j=1}^n(\X_k\X_j-\X_j\X_k)\big)=\Tr(A_n^r)
\text{ where } 
A_n=\left[\begin{smallmatrix}
    0 &i \\
    -i& 0
    \end{smallmatrix}\right]_n. 
$$
The eigenvalues and characteristic polynomial of the matrix $A_n$ were computed in \cite[Lemma 2.1, $\alpha=\pi/2$]{EjsmontLehner:2019:cotangent}   and they  are
$
\lambda_k=\cot\left(\frac{\pi}{2n}+\frac{k\pi}{n}\right)$ for $k\in\{0,\dots,n-1\} 
$ (including repeated eigenvalues), and 
\begin{equation}
  \label{eq:pnlambda}
\chi_n(\lambda) 
= \frac{(\lambda-i)^n+(\lambda+i)^n}{2}.
\end{equation}
Hence the odd cumulants vanish and the even cumulants
are equal to the cotangent sums
\begin{align*}
  K_{2m}\big(\sum_{k,j=1}^ni(\X_k\X_j-\X_j\X_k)\big)
  &=\sum_{k=0}^{n-1} \cot^{2m}\left(\frac{\pi}{2n}+\frac{k}{n}\pi\right)\\
\end{align*}
which were evaluated explicitly in
\cite[Corollary 6.4]{EjsmontLehner:2019:cotangent}
and the result is \eqref{eq:K2m=tangent}.

Once having realized $\lambda_k$ as roots of a polynomial,
it is easy to write down the generating function 
as a logarithmic derivative. Indeed, let 
\begin{align*}
  g_n(z) 
  &= \sum_{k=0}^{n-1} \frac{1}{z-\lambda_k}\\
  &= \frac{\chi_n'(z)}{\chi_n(z)}   \\
  &= n\frac{
    (z-i)^{n-1}
    +(z+i)^{n-1}
  }{
    (z-i)^n
    +(z+i)^n
  }
\end{align*}
then the ordinary generating function is
\begin{align*}
 F_n(z)&=\sum_{m=0}^\infty \sum_{k=0}^{n-1}\cot^m\frac{\pi/2+k\pi}{n}\,z^m\\  
 &= \frac{1}{z}g_n\left(\frac{1}{z}\right)  \\
  &= n\frac{
    (1-iz)^{n-1}
   +(1+iz)^{n-1}
  }{
    (1-iz)^n
   +(1+iz)^n
    }
  \\
 &= \frac{n}{1+z^2}
   \frac{
    (1-iz)^n(1+iz)
    +(1+iz)^n(1-iz)
  }{
    (1-iz)^n
    +(1+iz)^n
    }
  \\
 &= \frac{n}{1+z^2}
   \left(
   1+iz
   \frac{
    (1-iz)^n
    -(1+iz)^n
  }{
    (1-iz)^n
    +(1+iz)^n
    }
   \right)
  \\
 &= \frac{n(1+z\tan(n\arctan z))}{1+z^2}
\end{align*}
where in the last step we used the well known formula \cite[item 16]{HAKMEM}
\begin{equation}
\label{eq:tannatanz}
\tan(n\arctan z) = i\frac{(1-iz)^n-(1+iz)^n}{(1-iz)^n+(1+iz)^n}
;
\end{equation}
Finally we use the relation $R_{\T_n}(z)= (F_n(z)-n)/z$, which gives $R$-transform. 
\end{proof}

\subsection{Free skew-symmetric laws}

\begin{defi} \label{freeskewsymmetric}
  Let $\X_1, \X_2,\dots, \X_n\in \A_{sa}$   be a free  family of semicircular random variables with variance one and 
  $A=[a_{i,j}]_{i,j=1}^n\in M_n(\IC)$ be a selfadjoint matrix such that $A=-A^T.$
The law of the random variable 
$\sum_{k,j=1}^na_{k,j}(\X_k\X_j-\X_j\X_k)$  is called the \emph{free skew-symmetric distribution} with matrix $A$. 
\end{defi}

\begin{prop} \label{decopositionskewsymmetric}
A distribution  $\mu$ is free skew-symmetric with matrix $A$ if and only if 
 $\mu$   can be decomposed as a free convolution of rescaled tetilla distributions
$$\mu=D_{\lambda_1} (\T_2)\boxplus \dots \boxplus {D_{\lambda_{\lfloor n/2 \rfloor}}}(\T_2),$$
where the scale parameters  $\lambda_i$ are the positive  eigenvalues of $A$ and  the dilation $D_r$ is defined as $D_r(\nu)(A)=\nu(A/r)$ if $r\neq 0$ and $D_0(\nu)(A)=\delta_0$. ${\lfloor}\cdot {\rfloor}$ is the floor function which
rounds down to the nearest integer.
\end{prop}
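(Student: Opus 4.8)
The plan is to compute everything at the level of free cumulants and to exploit that the free skew-symmetric law depends on its matrix $A$ only through the power sums $\Tr(A^r)$, i.e.\ only through the spectrum of $A$. By Definition~\ref{freeskewsymmetric} such a law is the distribution of a quadratic form $Q=\sum_{i,j}a_{i,j}\X_i\X_j$ in a free standard semicircular family with skew-symmetric system matrix $A=[a_{i,j}]$, so by the Remark following Proposition~\ref{prop:CykliczneVariancja} its free cumulants are $K_r=\Tr(A^r)$ for all $r\ge 1$. Three immediate consequences: the law is uniquely determined by $A$; it is symmetric, since $A=-A^T$ forces $\Tr(A^r)=(-1)^r\Tr(A^r)=0$ for odd $r$; and, since the numbers $\Tr(A^r)$ are the power sums of the eigenvalues of $A$, two selfadjoint skew-symmetric matrices produce the same law precisely when they have the same multiset of eigenvalues, so the positive eigenvalues $\lambda_i$ appearing in the statement are intrinsic to $\mu$.

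Next I would bring $A$ into normal form. Writing $A=i\tilde A$ with $\tilde A$ real skew-symmetric, the canonical form of real skew-symmetric matrices under the real orthogonal group provides $O$ with $O^{T}=O^{-1}$ and
$$
O\tilde AO^{T}=\bigoplus_{k=1}^{\lfloor n/2\rfloor}\left[\begin{smallmatrix}0&\lambda_k\\-\lambda_k&0\end{smallmatrix}\right]\qquad(\text{together with an extra }1\times1\text{ zero block when }n\text{ is odd}),
$$
with $\lambda_1,\dots,\lambda_{\lfloor n/2\rfloor}\ge 0$, the $\pm i\lambda_k$ being exactly the eigenvalues of $\tilde A$; hence $A=i\tilde A$ has eigenvalues $\pm\lambda_k$ (plus a $0$ when $n$ is odd), so the $\lambda_k$ that are positive are precisely the positive eigenvalues of $A$, and those equal to $0$ will contribute $\delta_0$. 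Since $O$ is real, $OAO^{T}=i\,(O\tilde AO^{T})$ is again selfadjoint and skew-symmetric and equals $\bigoplus_k\lambda_kA_2$ (plus the zero block), where $A_2=\left[\begin{smallmatrix}0&i\\-i&0\end{smallmatrix}\right]$; because $\Tr$ is conjugation-invariant, the free skew-symmetric law of $A$ coincides with that of this block-diagonal matrix.

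It then remains to turn the direct-sum decomposition into a free convolution. If $A=A'\oplus A''$ with $A'\in M_m$, $A''\in M_{n-m}$ skew-symmetric, then $a_{i,j}=0$ whenever the indices straddle $m$, so $Q=Q'+Q''$ where $Q'$ is a quadratic form in $\X_1,\dots,\X_m$ and $Q''$ one in $\X_{m+1},\dots,\X_n$; these are subfamilies of a free family, hence free, so $Q'$ and $Q''$ are free and the law of $Q$ is the free convolution of the laws of $Q'$ and $Q''$. Iterating over the blocks from the previous step, using that scaling $A_2$ by $\lambda_k$ scales the corresponding random variable by $\lambda_k$ (hence pushes its law forward by $D_{\lambda_k}$) and that both the $1\times1$ zero block and every $2\times 2$ block with $\lambda_k=0$ contribute $\delta_0=D_0(\T_2)$, one gets $\mu=D_{\lambda_1}(\nu_2)\boxplus\dots\boxplus D_{\lambda_{\lfloor n/2\rfloor}}(\nu_2)$, where $\nu_2$ is the free skew-symmetric law with matrix $A_2$. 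Finally $\nu_2$ has free cumulants $\Tr(A_2^{r})=1+(-1)^{r}$, which are exactly the free cumulants of $\T_2$ (the $n=2$ instance of \eqref{eq:cumQnsemi}: the law of the commutator $i(\X_1\X_2-\X_2\X_1)$ of two free standard semicircular elements), so $\nu_2=\T_2$ and the forward implication is proved. The converse is this chain read backwards: given $\lambda_1,\dots,\lambda_{\lfloor n/2\rfloor}\ge 0$, the matrix $A:=O^{T}\big(\bigoplus_k\lambda_kA_2\big)O$ (plus a zero block if $n$ is odd) is selfadjoint skew-symmetric with positive eigenvalues the nonzero $\lambda_k$, and by the above its free skew-symmetric law is $D_{\lambda_1}(\T_2)\boxplus\dots\boxplus D_{\lambda_{\lfloor n/2\rfloor}}(\T_2)$.

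I do not expect a genuine obstacle: the argument is the classical normal form of a skew-symmetric matrix combined with the cumulant formula already established, and the conceptual point is merely that $K_r=\Tr(A^r)$ is additive over orthogonal direct sums in exactly the way free cumulants are additive under $\boxplus$. The only places needing care are the bookkeeping of the dilations $D_{\lambda_k}$ and the degenerate cases ($\lambda_k=0$, and the leftover $1\times1$ block when $n$ is odd), all of which are absorbed by the convention $D_0(\nu)=\delta_0$ together with the fact that $\delta_0$ is the neutral element for $\boxplus$.
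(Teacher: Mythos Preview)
Your argument is correct and takes a genuinely different route from the paper. The paper works at the level of the random variable itself: it writes the quadratic form as $-i[\X_1,\dots,\X_n]\,Q\Sigma Q^T\,[\X_1,\dots,\X_n]^T$ and then invokes the Hiwatashi--Nagisa--Yoshida theorem to conclude that $[Y_1,\dots,Y_n]:=[\X_1,\dots,\X_n]Q$ is again a free standard semicircular family, so that the quadratic form is literally a sum of free scaled tetilla elements $\lambda_k\,i(Y_{2k}Y_{2k-1}-Y_{2k-1}Y_{2k})$. You instead work entirely at the level of cumulants, using the formula $K_r=\Tr(A^r)$ from \eqref{eq:cumQnsemi}: conjugation invariance of the trace replaces the orthogonal change of variables, and additivity of the trace over direct sums replaces freeness of the $Y$-blocks. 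Your approach is more self-contained (it avoids the external reference and uses only what the paper has already established), while the paper's approach has the advantage of producing an explicit operator-level realization of $\mu$ as a sum of free commutators rather than just a distributional identity. One small point of bookkeeping: in Definition~\ref{freeskewsymmetric} the random variable is $\sum a_{k,j}(\X_k\X_j-\X_j\X_k)$, which for skew-symmetric $A$ equals $2\sum a_{k,j}\X_k\X_j$, so the system matrix feeding into \eqref{eq:cumQnsemi} is $2A$ rather than $A$; this only introduces a uniform scaling and does not affect your argument (the paper glosses over the same factor).
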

\begin{proof} 
    Let $A$ be a selfadjoint skew-symmetric matrix and
  and $\mu$ the corresponding distribution.
  Assume first that  $n$ is even.
  Recall that $iA$ is a real skew-symmetric matrix so the nonzero eigenvalues of this matrix are $\pm i\lambda_1,\dots,\pm i\lambda_{n/2}$.
  It is possible to bring every skew-symmetric matrix to a block diagonal form by
  an orthogonal transformation, see for example \cite{Youla:1961}.
  To be specific, every $n \times n$  real skew-symmetric matrix can be written in the form $iA=Q\Sigma Q^T$  
  where $Q$ is orthogonal and
  \begin{equation}
    \label{eq:skewmatrix}
  \Sigma=
  \begin{bmatrix} 
    0  & \lambda_1 & \dots  &  0 &  0  \\ 
    -\lambda_1 &  0 & \dots & 0 & 0
    \\
    \multicolumn{5}{c}{\dotfill}\\
    0 &  0& \dots & 0 &  \lambda_{n/2}
    \\ 0 & 0&   \dots & -\lambda_{n/2}  &   0
  \end{bmatrix},
  \end{equation}
  for real $\lambda_k$.  
Let $\X_1, \X_2,\dots, \X_n\in \A_{sa}$ be as in Definition \ref{freeskewsymmetric}, then we have 
\begin{align*}
  \sum_{k,j=1}^na_{k,j}(\X_k\X_j-\X_j\X_k)
  &=-i\begin{bmatrix}  X_1 & \dots & \X_n \end{bmatrix}iA \begin{bmatrix}  X_1 & \dots & X_n \end{bmatrix} ^T
  \\
  &=-i\begin{bmatrix}
    X_1 & \dots & X_n
  \end{bmatrix}
  Q\Sigma Q^T \begin{bmatrix} X_1 & \dots & X_n \end{bmatrix}^T.
  \intertext{Now   by \cite[Theorem~3.5]{HiwatashiNagisaYoshida:1999} the
    vector $\begin{bmatrix} Y_1 & \dots & Y_n \end{bmatrix}:=\begin{bmatrix} X_1 & \dots & X_n \end{bmatrix} Q$
   is a   free  family of semicircular random variables with variance one
   and we obtain a linear combination of free tetilla elements
    }
&=\lambda_1i(Y_2Y_{1}-Y_{1}Y_2)+\lambda_2i(Y_4Y_{3}-Y_{3}Y_4)+\dots+\lambda_{n/2}i(Y_nY_{n-1}-Y_{n-1}Y_n).
\end{align*}
For the converse, just pick the matrix $\Sigma$ from \eqref{eq:skewmatrix}.

In the odd-dimensional case the same orthogonal decomposition is true but in this
case $\Sigma$ always has at least one row and column of zeros
which does not contribute.
\end{proof}
\begin{cor} 
  \begin{enumerate}[1.]
  \item  Every free skew-symmetric distribution with system matrix $A$ of odd
   degree $n$ can be represented as  a skew-symmetric distribution of even degree $n-1$.
   In particular the generalized tetilla
   law with $3$ degrees of freedom can be obtained by a dilation from the tetilla law as 
   $$\T_3=D_{\sqrt{3}}(\T_2),$$
   the corresponding eigenvalues being $\cot(\frac{\pi}{6})=\sqrt{3}$, $\cot(\frac{\pi}{2})=0$ and $\cot(\frac{5\pi}{6})=-\sqrt{3}$. 

\item  Every free skew-symmetric distribution $\mu$ is a compound free Poisson distribution. 
Indeed,   $\T_2$ has a compound free Poisson distribution with symmetric  jump distribution
$\frac{1}{2}\delta_{-1}+\frac{1}{2}\delta_{1}$ and rate 2.  From Proposition~\ref{decopositionskewsymmetric}
(with the same designation) we infer  that $\mu$ has free compound Poisson distribution with rate $n$ and symmetric  jump distribution 
$$\frac{1}{n}\delta_{\lambda_1}+\frac{1}{n}\delta_{-\lambda_1}+\dots+\frac{1}{n}\delta_{{-\lambda_{\lfloor n/2 \rfloor}}}+\frac{1}{n}\delta_{{\lambda_{\lfloor n/2 \rfloor}}}.$$ 
Consequently
every compound free Poisson variable with  symmetric  jump distribution supported on a
finite set, with rate $n$ and evenly distributed mass can  be modeled as a linear combination of free commutators. 
\end{enumerate}
\end{cor}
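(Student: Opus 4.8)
The plan is to derive both items directly from Proposition~\ref{decopositionskewsymmetric}, the cumulant identity \eqref{eq:cumQnsemi}, and the elementary facts that free cumulants scale as $K_r(D_\lambda\nu)=\lambda^r K_r(\nu)$ under dilation and add under free additive convolution (since $R$-transforms add). No new combinatorics is needed; the work is a dictionary between eigenvalues of the system matrix, moments of finitely supported measures, and free cumulants.

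For item~(1) I would invoke the odd-dimensional case already treated at the end of the proof of Proposition~\ref{decopositionskewsymmetric}: when $A$ has odd degree $n$, the orthogonal normal form $iA=Q\Sigma Q^{T}$ produces a $\Sigma$ of the shape \eqref{eq:skewmatrix} that necessarily contains at least one vanishing $2\times2$ block, so the free skew-symmetric law attached to $A$ coincides (in distribution) with the one attached to the truncation of $\Sigma$ to its nonzero blocks, a skew-symmetric law of even degree $2\lfloor n/2\rfloor=n-1$. For the particular case of $\T_3$ I would recall from the proof of Proposition~\ref{prop:tetilla} that the eigenvalues of $A_3=\left[\begin{smallmatrix}0&i\\-i&0\end{smallmatrix}\right]_3$ are $\cot(\tfrac{\pi}{6})=\sqrt3$, $\cot(\tfrac{\pi}{2})=0$, $\cot(\tfrac{5\pi}{6})=-\sqrt3$; the unique positive eigenvalue is $\sqrt3$ and $\lfloor 3/2\rfloor=1$, so Proposition~\ref{decopositionskewsymmetric} gives directly $\T_3=D_{\sqrt3}(\T_2)$.

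For item~(2) I would first pin down $\T_2$. By \eqref{eq:cumQnsemi} its free cumulants are $K_r(\T_2)=\Tr(A_2^{r})$ with $A_2=\left[\begin{smallmatrix}0&i\\-i&0\end{smallmatrix}\right]_2$, whose eigenvalues are $\pm1$; hence $K_r(\T_2)=1^{r}+(-1)^{r}=2\,m_r\!\left(\tfrac12\delta_{-1}+\tfrac12\delta_{1}\right)$, which by the defining relation $K_n=\lambda\,m_n(\rho)$ identifies $\T_2$ as compound free Poisson of rate $2$ with jump law $\tfrac12\delta_{-1}+\tfrac12\delta_{1}$. Then, combining scaling and additivity of cumulants with Proposition~\ref{decopositionskewsymmetric}, one gets $K_r(\mu)=\sum_{i}\bigl(\lambda_i^{r}+(-\lambda_i)^{r}\bigr)$; setting $\sigma=\tfrac1n\sum_{i}(\delta_{\lambda_i}+\delta_{-\lambda_i})$, and adjoining one further atom $\tfrac1n\delta_0$ when $n$ is odd so that $\sigma$ becomes a probability measure, one checks $K_r(\mu)=n\,m_r(\sigma)$, so $\mu$ is compound free Poisson of rate $n$ with jump distribution $\sigma$. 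Finally, for the converse I would run this backwards: a symmetric finitely supported probability measure $\sigma$ with $n$ atoms of equal mass $\tfrac1n$ has support $\{\pm\lambda_1,\dots,\pm\lambda_m\}$ (plus the forced atom $0$ when $n=2m+1$); taking $\Sigma$ of the form \eqref{eq:skewmatrix} built from these $\lambda_k$ (with a zero block if $n$ is odd) and $A=-i\Sigma$ gives a selfadjoint matrix with $A=-A^{T}$ whose free skew-symmetric law has cumulants $\Tr(A^{r})=n\,m_r(\sigma)$, hence equals the prescribed compound free Poisson law; by Definition~\ref{freeskewsymmetric} this law is the linear combination of free commutators $\sum_{k<j}2a_{k,j}(\X_k\X_j-\X_j\X_k)$.

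The one point that requires genuine care, and which I regard as the only potential obstacle, is the odd-dimensional bookkeeping: the zero eigenvalue of $A$ must be tracked so that the jump measure is literally a probability measure and so that the $n$ equal atoms of $\sigma$ match the $n$ eigenvalues of $A$ counted with multiplicity (otherwise the rate-$n$ statement is off by one). Once this is handled consistently, everything else reduces to the routine translation between $\Tr(A^r)$, sums of powers of eigenvalues, and the moment sequence of $\sigma$, and I do not anticipate any further difficulty.
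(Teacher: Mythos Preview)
Your proposal is correct and follows the same line of reasoning the paper sketches inside the corollary itself (the paper gives no separate proof, only the ``Indeed \dots'' explanations): item~(1) is read off from the odd-dimensional clause at the end of the proof of Proposition~\ref{decopositionskewsymmetric} and the eigenvalue computation in Proposition~\ref{prop:tetilla}, and item~(2) comes from identifying $K_r(\T_2)=\Tr(A_2^r)=1^r+(-1)^r$ as a compound free Poisson cumulant sequence and then pushing this through the decomposition of Proposition~\ref{decopositionskewsymmetric}. Your explicit bookkeeping for odd $n$---adjoining the atom $\tfrac{1}{n}\delta_0$ so that the jump measure is genuinely a probability measure---is in fact a useful clarification, since the displayed jump distribution in the statement has total mass $(n-1)/n$ when $n$ is odd; the extra atom at $0$ does not affect any moment $m_r$ for $r\geq 1$, so the cumulant identity $K_r(\mu)=n\,m_r(\sigma)$ is unchanged.
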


\bibliographystyle{amsplain}

\providecommand{\bysame}{\leavevmode\hbox to3em{\hrulefill}\thinspace}
\providecommand{\MR}{\relax\ifhmode\unskip\space\fi MR }
\providecommand{\MRhref}[2]{  \href{http://www.ams.org/mathscinet-getitem?mr=#1}{#2}
}
\providecommand{\href}[2]{#2}

\end{document}